\numberwithin{equation}{section}
\theoremstyle{plain}
\newtheorem{Theorem}{Theorem}[section]
\newtheorem{Lemma}[Theorem]{Lemma}
\newtheorem{Proposition}[Theorem]{Proposition}
\theoremstyle{definition}
\newtheorem{Definition}[Theorem]{Definition}
\DeclareRobustCommand{\stirling}{\genfrac\{\}{0pt}{}}
\newcommand{\wh}{\widehat}
\newcommand{\wt}{\widetilde}
\providecommand{\eps}{\varepsilon}
\newcommand{\pa}{\operatorname{pa}}
\newcommand{\Poi}{\operatorname{Poisson}}
\newcommand{\mB}{\mathcal{B}}
\newcommand{\mD}{\mathcal{D}}
\newcommand{\mF}{\mathcal{F}}
\newcommand{\mG}{\mathcal{G}}
\newcommand{\mH}{\mathcal{H}}
\newcommand{\mJ}{\mathcal{J}}
\newcommand{\mM}{\mathcal{M}}
\newcommand{\mN}{\mathcal{N}}
\newcommand{\mR}{\mathcal{R}}
\newcommand{\mU}{\mathcal{U}}
\newcommand{\KDE}{\widehat{f}_{\operatorname{KDE}}}
\newcommand{\bd}{\mathbf{d}}
\renewcommand{\bf}{\mathbf{f}}
\newcommand{\bp}{\mathbf{p}}
\newcommand{\bt}{\mathbf{t}}
\newcommand{\bv}{\mathbf{v}}
\newcommand{\bu}{\mathbf{u}}
\newcommand{\bx}{\mathbf{x}}
\newcommand{\bX}{\mathbf{X}}
\newcommand{\by}{\mathbf{y}}
\newcommand{\balpha}{\bm{\alpha}}
\newcommand{\bgamma}{\bm{\gamma}}
\newcommand{\btheta}{\bm{\theta}}
\newcommand{\cmmnt}[1]{\ignorespaces}
\newcommand{\1}{\mathbbm{1}}
\newcommand{\ceil}[1]{\lceil {#1}\rceil}
\newcommand{\floor}[1]{\lfloor {#1}\rfloor}
\DeclareMathOperator*{\argmin}{arg\,min}
\DeclareMathOperator*{\argmax}{arg\,max}
\DeclareMathOperator\supp{supp}
\begin{document}

\title{A supervised deep learning method for nonparametric density estimation}
\author{%
    Thijs Bos\footnotemark[1] \footnotemark[3] %
    \ \ and Johannes Schmidt-Hieber\footnotemark[2] \footnotemark[3]%
}
  \footnotetext[1]{Leiden University} 
  \footnotetext[2]{University of Twente}
  \footnotetext[3]{We want to thank Claire Donnat for pointing us to Lindsey's method. The research has been supported by the NWO/STAR grant 613.009.034b and the NWO Vidi grant VI.Vidi.192.021.}
\date{}
\maketitle

\begin{abstract}
Nonparametric density estimation is an unsupervised learning problem. In this work we propose a two-step procedure that casts the density estimation problem in the first step into a supervised regression problem. The advantage is that we can afterwards apply supervised learning methods. Compared to the standard nonparametric regression setting, the proposed procedure creates, however, dependence among the training samples. To derive statistical risk bounds, one can therefore not rely on the well-developed theory for i.i.d.\ data. To overcome this, we prove an oracle inequality for this specific form of data dependence. As an application, it is shown that under a compositional structure assumption on the underlying density, the proposed two-step method achieves convergence rates that are faster than the standard nonparametric rates. A simulation study illustrates the finite sample performance.
\end{abstract}

\textbf{Keywords:} Neural networks, nonparametric density estimation, statistical estimation rates, (un)supervised learning.

\textbf{MSC 2020:} Primary: 62G07; secondary 68T07

\section{Introduction}

Machine learning distinguishes between supervised and unsupervised learning tasks \cite{bishop2006PRML, murphy2012machine}. In the supervised framework, the dataset consists of input-output pairs. No outputs are observed in the unsupervised setting. For supervised learning, classical examples are regression and classification; for unsupervised learning, commonly encountered problems are density estimation and clustering. The apparent difference between supervised and unsupervised tasks resulted in machine learning methods that either apply to the supervised or to the unsupervised framework. While neural nets can be applied in both scenarios, the underlying methodology is mostly unrelated: In the supervised context, deep learning is applied to reconstruct the function mapping the inputs to the outputs; in the unsupervised framework, neural networks are employed for instance in ODE-based models for density estimation \cite{grathwohl2018scalable, Onken_Wu_Fung_Li_Ruthotto_2021, marzouk2023distribution} or for feature extraction, e.g.\ by making use of variational autoencoders \cite{IntroVAEKingmaWelling}. Moreover, generative AI methods such as generative adversarial networks (GANs) or diffusion models invoke neural networks and can be viewed as density estimators \cite{dinh2017density, 2020arXiv200203938C, pmlr-v132-schreuder21a, 2022arXiv220202890C, stephanovitch2023wasserstein, pmlr-v202-oko23a}.

In this article, we show how unsupervised multivariate density estimation can be cast into a supervised regression problem. For that, we generate suitable response variables from the data in a first step. Rewriting the problem as supervised learning task allows us to borrow strength from supervised learning methods. We demonstrate this by fitting deep ReLU networks. In the theoretical deep learning literature, it has been shown that supervised deep networks can outperform other methods if the target function exhibits some compositional structure. Making the link to supervised learning allows us to exploit this property also for density estimation. This is highly desirable as a compositional structure is frequently imposed in modelling of densities. Examples include copula models \cite{PairCopulaConstructionsMultipleDependence,nagler2016evading} and Bayesian network models \cite{koller2009probabilistic}, see also Section \ref{S: Examples}.

Theorem \ref{T: Oracle Inequality} is our main theoretical contribution and establishes an oracle inequality for supervised regression methods applied to nonparametric density estimation. The key technical difficulty in the proof is to deal with the dependence incurred by generating the response variables in the first step of the proposed method. To control the dependence, we use a Poissonization argument. Applying the derived oracle inequality, we show in Theorem \ref{T: Convergence Rates} that deep ReLU networks can obtain fast convergence rates, given that the underlying density has a compositional structure. For sufficiently smooth densities, the convergence rates are, up to logarithmic factors in the sample size, the same as the recently obtained minimax rates in the nonparametric regression model under compositional structure on the regression function, \cite{NonParametricRegressionReLU}. But there are also smoothness regimes where the convergence rate is slower by a polynomial order in the sample size if compared to the nonparametric regression case. This is due to the first step in the construction of the estimator that transforms the density estimation problem into a supervised regression problem. But still then there are scenarios where the convergence rate is considerably faster than doing off-the-shelf kernel density estimation without taking the underlying compositional structure of the density into account.

The paper is structured as follows. Section \ref{S:Model} describes the construction of suitable response variables from the data. In Section \ref{S: Main Results} we present a suitable oracle inequality for non-i.i.d.\ data. Furthermore, we provide convergence rates in the case that the regression estimator is a deep neural network and the underlying density are compositional functions. In Section \ref{S: Examples} we shortly discuss some density models that exhibit compositional structure. A small (exploratory) simulation is provided in Section \ref{S: Simulations}. Section \ref{S: RelLit} summarizes related literature. Almost all proofs are deferred to the Appendix.

\subsection{Notation}

We denote vectors and vector valued functions by bold letters. For a vector $\bx=(x_1,\ldots,x_k)^\top$ we define $|\bx|_{\infty}=\max_{i=1,\hdots,k}|x_i|$, $|\bx|_1=\sum_{i=1}^k|x_i|.$ and $|\bx|_0=\sum_{i=1}^k\1_{\{x_i\neq 0\}}$. For partial derivatives we use multi-index notation, that is, if $\balpha=(\alpha_1,\ldots,\alpha_d)\in \{0,1,2,\ldots \}^d$ we set $\partial^{\balpha}:=\partial_{x_1}^{\alpha_1}\hdots\partial_{x_d}^{\alpha_d}.$ We denote the supremum norm of a function $f:\mD\rightarrow \mathbb{R}$ by $\|f\|_\infty=\sup_{\bx\in \mD}|f(\bx)|$. As commonly defined in nonparametric statistics, for a real number $x\in\mathbb{R},$ $\floor{x}$ is the largest integer $< x$ and $\ceil{x}$ is the smallest integer $\geq x$. The minimum and maximum of two real numbers $x,y$ are also denoted by the respective expressions $x\wedge y$ and $x\vee y.$ For two sequences $(a_n)_n$ and $(b_n)_n$, we write $a_n\lesssim b_n$ if there exists a constant $C$ such that $a_n\leq Cb_n$ for all $n$. Moreover, $a_n\asymp b_n$ means that $a_n\lesssim b_n$ and $b_n\lesssim a_n$. If no basis is specified, then $\log=\ln.$

\section{Conversion into a supervised learning problem}
\label{S:Model}

We consider nonparametric density estimation on the hypercube $[0,1]^d$, where we observe $2n$ i.i.d.\ random vectors $\bX_i\in[0,1]^d$ which are distributed according to an unknown density $f_0$ from a nonparametric class. The density estimation problem is to recover this density $f_0$ from the data $\bX_1,\ldots, \bX_{2n}.$ Here the sample size $2n$ is chosen for notational convenience, as we will do data splitting. It is moreover convenient to rename the second half of the dataset and denote it by $\bX_1',\ldots,\bX_n'.$ Thus, we are observing the $2n$ i.i.d.\ random vectors $(\bX_1,\ldots, \bX_{n},\bX_1',\ldots, \bX_{n}').$ In the first step of the proposed method, the second half of the sample $\bX_1',\ldots, \bX_{n}'$ is used to compute an undersmoothed kernel density estimator. From that we construct a response variable $Y_i$ for each of the remaining datapoints $\bX_i$. The response variables $Y_i$ can be interpreted as noisy versions of $f_0(\bX_i).$ The augmented data $(\bX_1,Y_1),\ldots,(\bX_n,Y_n)$ are now viewed as a nonparametric regression problem with the unknown density $f_0$ as regression function. Thus, any nonparametric regression technique could be applied to recover the regression function $f_0$ from the supervised data $(\bX_1,Y_1),\ldots,(\bX_n,Y_n).$ Here we propose to fit a deep neural network. This is motivated by previous work which has shown that deep neural networks can adapt to various forms of structural constraints and avoid the curse of dimensionality \cite{kohler2005adaptive,poggio2017and, bauer2019,NonParametricRegressionReLU,kohler2021rate}. As we will argue below, such structural constraints occur in modelling of multivariate densities. Fitting a neural network to the regression data $(\bX_1,Y_1),\ldots,(\bX_n,Y_n)$ is therefore natural.

In nonparametric statistics, a function $K:\mathbb{R}\to \mathbb{R}$ is called a kernel if $\int K(u) \, du =1.$ If for some positive integer $s,$ we moreover have vanishing moments $\int u^\ell K(u) \, du=0$ for all $\ell=1,\ldots,s$, and $\int |u|^{s+1}K(u) \, du <\infty,$ then $K$ is a called a kernel of order $s$. We now outline the two steps of the method.

\textbf{Step 1:} Choose a kernel $K$ with $\|K\|_\infty <\infty$ and support on $[-1,1].$ For a bandwidth $h_n$ satisfying $(\log(n)/n)^{1/d}\leq h_n\leq 2(\log(n)/n)^{1/d}$ and such that $h_n^{-1}$ is a positive integer for all $n>1$ (existence of such a sequence is guaranteed by Lemma \ref{lem.hn_exist}), consider the multivariate kernel density estimator based on the subsample $\bX_1',\ldots,\bX_n'$ with $\bX_{\ell}'=(X'_{\ell,1},\ldots,X'_{\ell,d})^\top$ given by
\begin{equation}
    \begin{aligned}
    \KDE(\bx):=\frac{1}{nh_n^d}\sum_{\ell=1}^{n}\prod_{r=1}^{d}K\left(\frac{X'_{\ell,r}-x_r}{h_n}\right).
 \end{aligned} 
 \label{eq.KDE_def}
 \end{equation}

For $i=1,\ldots,n,$ define 
\begin{equation}
    Y_i:=\KDE(\bX_i). 
    \label{eq.YiKDE}
\end{equation}
Setting $\epsilon_i:=Y_i-f_0(\bX_i),$ we obtain the regression model
\begin{equation}
    \begin{aligned}
   Y_i=f_0(\bX_i)+\epsilon_i, \quad i=1,\ldots,n.
 \end{aligned} 
 \label{eq.regr_model}
 \end{equation}
\textbf{Step 2:} Compute an estimator $\wh f$ based on the data $(\bX_1,Y_1),\ldots,(\bX_n,Y_n).$

\begin{Definition}
\label{defi.2step}
We refer to any such $\wh f$ as two-stage nonparametric density estimator. If the kernel is of order $s,$ we call $\wh f$ the two-stage nonparametric density estimator with kernel of order $s.$ 
\end{Definition}

Both $\wh f$ and the kernel density estimator $\KDE$ are estimators for $f_0.$ However, because of the small bandwidth, the kernel density estimator severely undersmooths. The variance of $\KDE(\bx)$ at a given point $\bx$ is known to scale with $1/(nh_n^d)\asymp 1/\log(n).$ This means that the noise variables $\eps_i$ scale with $1/\sqrt{\log(n)}$ in the sample size. Therefore, the denoising happens in the second step of the proposed two-step procedure. 

Although the notation seems to suggest that \eqref{eq.regr_model} is the standard nonparametric regression framework, all data points depend on the underlying kernel density estimator $\KDE.$ The pairs $(\bX_1,Y_1),\ldots,(\bX_n,Y_n)$ are henceforth dependent and thus not i.i.d. To deal with this dependence is the main technical challenge in the analysis of the proposed method.

The kernel density estimator in Step 1 undersmooths and does not require knowledge of the true smoothness. The conditions on the kernel $K$ are standard. Taking a kernel of order $s$ together with an optimal bandwidth choice is known to lead to optimal convergence rates if the smoothness of the density is at most $s+1.$ The fact that the bandwidth is chosen such that $h_n^{-1}$ is a positive integer allows us to partition $[0,1]$ into $h_n^{-1}$ disjoint intervals of length $h_n.$ 

For fitting a function to the data $(\bX_1,Y_1),\ldots,(\bX_n,Y_n)$ in the second step of the procedure, machine learning methods aim to minimize a loss. For regression, the most common choice is the least squares loss $\tfrac{1}{n}\sum_{i=1}^n \big(Y_i-f(\bX_i)\big)^2.$ The least squares estimator $\widehat{f}_n$ over a function class $\mF$ for the density $f_0$ is defined as any global minimizer of the least squares loss
\begin{equation*}     \begin{aligned}
    \widehat{f}_n\in\argmin_{f\in\mF} \frac{1}{n}\sum_{i=1}^n \big(Y_i-f(\bX_i)\big)^2.
 \end{aligned} \end{equation*}
Due to the nonconvex energy landscape, neural network training usually does not find the global minimum. The difference between training error of the estimator and training error of the global minimum is commonly referred to as optimization error. For any estimator $\widehat{f}$ taking values in a function class $\mF,$ and data generated from the nonparametric regression model with regression function $f_0,$ we consider here the optimization error 
\begin{equation}
\Delta_n(\widehat{f},f_0):=\mathbb{E}_{f_0}\left[\frac{1}{n}\sum_{i=1}^n(Y_i-\widehat{f}(\bX_i))^2-\inf_{f\in\mF}\frac{1}{n}\sum_{i=1}^n(Y_i-f(\bX_i))^2\right],
\label{eq.opt_error_def}
\end{equation}
where the expectation is taken over the full data set, making $\Delta_n(\widehat{f},f_0)$ deterministic.

The risk of an estimator $\widetilde{f}$ is given by
\begin{equation}\begin{aligned}
R(\widetilde{f},f_0)&:=\mathbb{E}_{f_0,\bX}\left[(\widetilde{f}(\bX)-f_0(\bX))^2\right]
=\int \mathbb{E}_{f_0}\left[(\widetilde{f}(\bx)-f_0(\bx))^2\right] \, f_0(\bx) \, d\bx.
 \end{aligned} \end{equation} 
 Here $\bX\overset{d}{=}\bX_1$ is independent of the data and $\mathbb{E}_{f_0,\bX}$ is the expectation with respect to the joint distribution of $\bX$ and the data set. We denote by $\mathbb{E}_{\bX}$ the expectation with respect to $\bX$.

\section{Main results}\label{S: Main Results}

We assume that the density $f_0$ belongs to the class of $\beta$-H\"{o}lder smooth functions on $\mathbb{R}^d$ with support on $[0,1]^d$. For $\beta>0$ and a domain $\mD\subseteq \mathbb{R}^d,$ the ball of $\beta$-H\"{o}lder functions with radius $Q$ is defined as
\begin{equation}\label{Eq: Holder ball}
    \begin{aligned}
    \mH_d^{\beta}(\mD,Q):=\Bigg\{f: \mD\rightarrow \mathbb{R}:&\sum_{\bgamma: \, 0\leq |\bgamma|_1<\beta} \|\partial^{\bgamma}f\|_{\infty}\\
   &\quad +\sum_{\bgamma:|\bgamma|_1=\floor{\beta}}\sup_{\bx,\by\in\mD,\bx\neq\by}\frac{|\partial^{\bgamma}f(\bx)-\partial^{\bgamma}f(\by)|}{|\bx-\by|_{\infty}^{\beta-\floor{\beta}}}\leq Q\Bigg\},
    \end{aligned}
\end{equation}
where $\|\cdot\|_\infty$ denotes the supremum norm on $\mD$, $\partial^{\bgamma}=\partial_{x_1}^{\gamma_1}\ldots\partial_{x_d}^{\gamma_d},$ and $\bgamma=(\gamma_1,\ldots,\gamma_d)\in \{0,1,2,\ldots\}^d.$ To define the partial derivatives if $\mD$ is not an open set, we assume that there exists an open set $\mU\supset \mD$ and an extension of $f$ on $\mU$ for which the partial derivatives $\partial^\gamma$ for all $\bgamma$ with $ |\bgamma|_1< \beta$ are defined. The class of $\beta$-H\"{o}lder smooth densities on $\mathbb{R}^d$ and support on $[0,1]^d$ can subsequently be defined as
\begin{equation*}
    \begin{aligned}
    \mathcal{C}^{\beta}_d(Q):=\left\{f\in \mH^{\beta}_d(\mathbb{R}^d,Q):\supp f\subseteq [0,1]^d,\int_{[0,1]^d}f(\bx)\,d\bx=1,f\geq 0\right\}.
    \end{aligned}
\end{equation*}
We define the class of $\beta$-H\"{o}lder smooth densities on $[0,1]^d$ by restricting $\beta$-H\"{o}lder smooth densities on $\mathbb{R}^d$ to $[0,1]^d$,
\begin{equation*}
    \begin{aligned}
        \mathcal{C}^\beta_d([0,1]^d,Q):=\left\{f:[0,1]^d\rightarrow \mathbb{R}: \text{there exists } h\in \mathcal{C}^\beta_d(Q) \text{ such that } f=h|_{[0,1]^d}\right\}.
    \end{aligned}
\end{equation*}
Below we assume that the true density lies in this space. The condition that densities in this space can be extended to smooth functions on $\mathbb{R}^d$ is imposed to avoid (technical) difficulties of the kernel density estimator near the boundary of $[0,1]^d$. For a reference dealing with the behaviour of kernel estimators near boundaries, see Section 2.11 of \cite{wand1994kernel}. 

We state the oracle inequality for estimators taking values in an abstract function class $\mF(F)\subseteq\{f:\|f\|_{\infty}\leq F\}$. For that, we denote by $\mN_{\mF}(\delta)$ the covering number of a class $\mF(F)$ with respect to the supremum norm. More specifically, $\mN_{\mF}(\delta)$ is the smallest number of supremum norm balls with radius $\delta$ and centers contained in $\mF$ that are necessary to cover $\mF$.

\begin{Theorem}\label{T: Oracle Inequality}
For $n\geq 3,$ consider the density estimation model defined by \eqref{eq.KDE_def}-\eqref{eq.regr_model} with density $f_0$ in the H\"{o}lder class $  \mathcal{C}^\beta_d([0,1]^d,Q).$ Let $\wh f$ be a two-stage nonparametric density estimator with kernel of order $\floor{\beta}$ as defined in Definition \ref{defi.2step}. If $\wh f$ takes values in the function class $\mF=\mF(F)$, with $F\geq \max\{Q,1\},$ then there exist constants $C_1,C_2,C_3$ only depending on $F,K,d,Q,\beta$ such that for any $\delta>0,$
\begin{equation*}
    \begin{aligned}
    R(\widehat{f},f_0)\leq & \, C_1\frac{\log^2(n)\log(n\vee \mN_{\mF}(\delta))}{n}+C_2\delta+C_3\left(\frac{\log(n)}{n}\right)^{\frac{2\beta}{d}}+6\Delta_n(\widehat{f},f_0)\\
    &+7\inf_{f\in\mF}\mathbb{E}_{\bX}\left[(f(\bX)-f_0(\bX))^2\right].
    \end{aligned}
\end{equation*}
\end{Theorem}

 As common for oracle inequalities, the upper bound contains an approximation term, a complexity term involving the metric entropy, and the optimization error $\Delta_n(\widehat{f},f_0).$ For neural networks and other parametrizable function classes, the metric entropy $\log(\mN_\mF(\delta))$ depends only logarithmically on $\delta$ and one can choose $\delta=1/n$, making the $C_2\delta$ term negligibly small.

 Additionally, the bound contains the term $C_3(\log(n)/n)^{2\beta/d}$ that is due to the bandwidth choice $h_n\asymp (\log(n)/n)^{1/d}$ and a term of the order $h_n^{2\beta}$ that can be traced back to Proposition \ref{P: Bound empirical risk}. To decrease the order of the $C_3(\log(n)/n)^{2\beta/d}$ term, it is tempting to aim for a smaller bandwidth $h_n\ll n^{-1/d}.$ However, even if the data points are equally spaced in $[0,1]^d,$ the distance of two neighboring data points is $n^{-1/d}.$ Thus for bandwidth $h_n\ll n^{-1/d}$, it follows from the definition of the kernel density estimator in \eqref{eq.KDE_def} that the estimated density degenerates into separate spikes centered around the data points $\bX_1',\ldots,\bX_n'.$ As a consequence, a generated response variables $Y_i$ will likely either be extremely large or attain a value near zero and the two-step method that we propose will not work anymore.
 
 Compared to oracle inequalities for i.i.d.\ data in the nonparametric regression model, the main difficulty in the proof of the previous theorem is to deal with the various sources of dependence. Even conditionally on the sample $\bX_1',\ldots,\bX_n',$ two noise variables $\eps_i=\KDE(\bX_i)-f_0(\bX_i)$ and $\eps_j=\KDE(\bX_j)-f_0(\bX_j)$ are highly dependent in the regression model \eqref{eq.regr_model} whenever $\bX_i,\bX_j$ are close. If we also take the randomness of the $\bX_1',\ldots,\bX_n'$ into account, then the evaluation of the kernel density estimator at two deterministic points $\KDE(\bx)$ and $\KDE(\bx')$ are (slightly) dependent random variables even if $\bx$ and $\bx'$ are far away. The rationale behind is that if $\KDE(\bx)>f_0(\bx),$ then it is a bit more likely that $\KDE(\bx')<f_0(\bx')$ as $\int \KDE(\bx)-f_0(\bx) \, d\bx=1-1=0.$ To control this dependence, it is common to use Poissonization techniques, cf.\ \cite{WeakConvergenceEmpProcesses} Section 3.5.2., \cite{EmpiricalProcessesGaenssler}, and \cite{dudley1984course} Section 8.3. To explain the idea underlying Poissonization, consider a Poisson point process on $[0,1]^d.$ For two disjoint sets $A,B\subset [0,1]^d,$ the number of points that fall in set $A$ and the number of points that fall in set $B$ are independent random variables. Also many statistics can be shown to produce independent random variables if they are separately applied to the points in $A$ and the points in $B.$ The Poissonization trick is now to pretend that we do not have $n$ data points $\bX_1',\ldots,\bX_n'$ but $\mM$ data points, that is, we observe $\bX_1',\ldots,\bX_\mM'$ for $\mM$ an independently generated Poisson random variable with intensity $n.$ Then, $\bX_1',\ldots,\bX_\mM'$ can be interpreted as the points of a Poisson point process with intensity $\bx\mapsto nf_0(\bx).$ We can now also redefine the kernel density estimator $\KDE$ for $\bX_1',\ldots,\bX_\mM',$ by keeping the same normalization $1/n$ but summing over $\ell=1,\ldots,\mM.$ Because we have chosen the kernel to have support in $[-1,1],$ $\KDE(\bx)$ only depends on the subset $D(\bx):=\{\bX_i':|\bX_i'-\bx|_\infty\leq h_n, i=1,\ldots,\mM\} \subseteq \{\bX_1',\ldots,\bX_{\mM}'\}.$ If $|\bx-\by|_\infty>2h_n,$ then $D(\bx)$ and $D(\by)$ are disjoint sets and one can even show that the statistics $\KDE(\bx)$ and $\KDE(\by)$ are independent. To control the change of probability going from $n$ to $\mM$ observations, we can apply the following result:
\begin{Lemma}
    \label{lem.Poissonization}
    For $\mM$ and $\bX_1',\bX_2',\ldots$ as above, for any function $h,$ and any measurable set $A$,
    \begin{align*}
        \mathbb{P}\bigg(\sum_{i=1}^n h(\bX_i')\in A\bigg)
        \leq \sqrt{2e\pi n}\, \mathbb{P}\bigg(\sum_{i=1}^{\mathcal{M}} h(\bX_i')\in A\bigg).
    \end{align*}
\end{Lemma}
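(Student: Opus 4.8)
The plan is to condition on the value of $\mM$ and keep only the single term in which $\mM$ equals $n$. Since by construction $\mM\sim\Poi(n)$ is independent of the i.i.d.\ sequence $\bX_1',\bX_2',\ldots$, the law of total probability gives
\[
\mathbb{P}\bigg(\sum_{i=1}^{\mM}h(\bX_i')\in A\bigg)=\sum_{m=0}^{\infty}\mathbb{P}(\mM=m)\,\mathbb{P}\bigg(\sum_{i=1}^{m}h(\bX_i')\in A\bigg)\geq \mathbb{P}(\mM=n)\,\mathbb{P}\bigg(\sum_{i=1}^{n}h(\bX_i')\in A\bigg),
\]
where in the last step all summands are nonnegative and we retain only $m=n$. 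After rearranging, the lemma follows once we show that the Poisson point mass satisfies $\mathbb{P}(\mM=n)\geq 1/\sqrt{2e\pi n}$.

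For this I would use $\mathbb{P}(\mM=n)=n^n e^{-n}/n!$ together with a nonasymptotic Stirling bound. From $n!\leq \sqrt{2\pi n}\,(n/e)^n e^{1/(12n)}$, valid for every $n\geq 1$, and $e^{1/(12n)}\leq e^{1/12}<\sqrt{e}$, one gets $n!<\sqrt{2e\pi n}\,(n/e)^n$, hence $\mathbb{P}(\mM=n)>1/\sqrt{2e\pi n}$. Substituting this into the display above and dividing through yields
\[
\mathbb{P}\bigg(\sum_{i=1}^{n}h(\bX_i')\in A\bigg)\leq \sqrt{2e\pi n}\,\mathbb{P}\bigg(\sum_{i=1}^{\mM}h(\bX_i')\in A\bigg),
\]
which is the claim.

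There is no genuine obstacle here: the heart of the argument is the elementary observation that a Poisson mixture of the events $\{\sum_{i\leq m}h(\bX_i')\in A\}$ dominates a constant multiple of its $m=n$ component, and the only computation is the lower bound on $\mathbb{P}(\mM=n)$, for which any explicit Stirling inequality suffices (even the cruder $n!\leq e\sqrt{n}\,(n/e)^n$ would do, since $e\leq\sqrt{2e\pi}$). The one point to be mildly careful about is that splitting the probability in the first display uses the independence of $\mM$ from the sample $\bX_1',\bX_2',\ldots$; this is exactly how $\mM$ was introduced above, so it is available.
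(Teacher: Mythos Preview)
Your proof is correct and essentially identical to the paper's: both isolate the $\mM=n$ term (you via the law of total probability, the paper via the conditional-probability inequality $\mathbb{P}(B\mid C)\leq \mathbb{P}(B)/\mathbb{P}(C)$, which is the same step) and then bound $\mathbb{P}(\mM=n)$ using the nonasymptotic Stirling estimate $n!\leq \sqrt{2\pi n}(n/e)^n e^{1/(12n)}\leq \sqrt{2e\pi n}(n/e)^n$.
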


\begin{proof}[Proof of Lemma \ref{lem.Poissonization}]
We have 
\begin{align*}
        \mathbb{P}\bigg(\sum_{i=1}^n h(\bX_i')\in A\bigg)
        &= \mathbb{P}\bigg(\sum_{i=1}^{\mathcal{M}} h(\bX_i')\in A \, \bigg|\, \mathcal{M}=n\bigg)
        \leq \frac{ \mathbb{P}\big(\sum_{i=1}^{\mathcal{M}} h(\bX_i')\in A\big)}{\mathbb{P}(\mathcal{M}=n)}.
    \end{align*}
Since $\mM$ is a $\Poi(n)$ random variable we have that $\mathbb{P}(\mM=n)=n^{n}e^{-n}/n!.$ By Stirling's formula, see for example \cite{RobbinsStirlingFormula}, $n!\leq\sqrt{2\pi n}(n/e)^{n}e^{1/(12n)}\leq \sqrt{2e\pi n}(n/e)^{n}$ and $1/\mathbb{P}(\mM=n)\leq \sqrt{2e\pi n}.$
\end{proof}

While Poissonization removes dependence, the factor $\sqrt{2e\pi n}$ arises in the bounds.

\subsection{Neural networks}

We study the effect of fitting a deep ReLU network in the regression step of the proposed two-step procedure. We rely on the mathematical formulation of deep neural networks introduced in \cite{NonParametricRegressionReLU} and briefly recall the details for completeness of the exposition. The rectified linear unit (ReLU) activation function is $\sigma(x):=\max\{x,0\}$. For any vectors $\bv=(v_1,\ldots,v_r)^{\top},\by=(y_1,\ldots,y_r)^{\top}\in\mathbb{R}^r$, we define the shifted activation function $\sigma_{\bv}\by:=(\sigma(y_1-v_1),\ldots,\sigma(y_r-v_r))^{\top}.$
The number of hidden layers is specified by $L$ and the width of the layers is denoted by the width vector $\bp=(p_0,\ldots,p_{L+1})\in\mathbb{N}^{L+2}.$ A network with network architecture $(L,\bp)$ is any function of the form
\begin{equation}\label{Eq: Network Architecture}
    \bf:\mathbb{R}^{p_0}\rightarrow \mathbb{R}^{p_{L+1}},\ \bx\mapsto \bf(\bx)=W_L\sigma_{\bv_L}W_{L-1}\sigma_{\bv_{L-1}}\hdots W_1\sigma_{\bv_1}W_0\bx,
\end{equation}
where $W_j$ is a $p_{j+1}\times p_{j}$ weight matrix and $\bv_j\in\mathbb{R}^{p_j}$ is a shift vector. We use the convention that $\bv_0:=(0,\ldots,0)^{\top}\in\mathbb{R}^{p_0}.$
Denote the maximum entry norm of a matrix $W$ by $\|W\|_{\infty}$. The class of ReLU networks with architecture $(L,\bp)$ and parameters bounded in absolute value by one is
\begin{equation*}
    \mF(L,\bp):=\left\{\bf \text{ is of the form \eqref{Eq: Network Architecture} }: \max_{j\in\{0,\ldots,L\}}\|W_j\|_{\infty}\vee |\bv_j|_{\infty}\leq 1\right\}.
\end{equation*}
For a matrix $W$ denote the counting norm (number of non-zero entries) by $\|W\|_{0}$. We are interested in sparsely connected neural networks where the number of non-zero or active parameters is small compared to the total number of parameters. For this we define the class of $s$-sparse networks, that are bounded in uniform norm by $F$, as
\begin{align*}
    \mF(L,\bp,s,F):=
    \bigg\{\bf \text{ is of the form \eqref{Eq: Network Architecture} }: &\max_{j\in\{0,\ldots,L\}}\|W_j\|_{\infty}\vee |\bv_j|_{\infty}\leq 1, \\ & \sum_{j=0}^L\|W_j\|_0+|\bv_j|_0\leq s, \||\bf|_{\infty}\|_{\infty}\leq F\bigg\}.
\end{align*}

\begin{Definition}[Two-stage neural network density estimator]
\label{eq.defi_estimator}
If the two-stage nonparametric density estimator $\wh f$ fits in the second step a neural network from the class $\mF(L,\bp,s,F)$ to the augmented sample $(\bX_1,Y_1),\ldots,(\bX_n,Y_n)$, then we refer to $\wh f$ as two-stage neural network density estimator. If the kernel in the first step of the procedure is of order $s,$ we call $\wh f$ a two-stage neural network density estimator with kernel of order $s.$
\end{Definition}

\subsection{Structural constraints: compositions of functions}
Deep neural networks are built by computing individual layers. Previously derived statistical theory has shown that they are well-suited to pick up compositional structure in the regression function, \cite{kohler2005adaptive,poggio2017and, bauer2019,NonParametricRegressionReLU,kohler2021rate}. In this work we follow the composition structure introduced in \cite{NonParametricRegressionReLU} and impose it on the multivariate density $f_0$, that is, we assume that $f_0=g_q\circ g_{q-1}\circ\ldots\circ g_1\circ g_0,$ with $g_i:[a_i,b_i]^{d_i}\rightarrow[a_{i+1},b_{i+1}]^{d_{i+1}}.$  Denote by $g_i=(g_{ij})^{\top}_{j=1,\ldots,d_{i+1}}$ the components of $g_i$ and let $t_i$ be the maximal number of variables on which each of the $g_{ij}$ depends. It always holds that $t_i\leq d_i$ and for certain models, $t_i$ can be much smaller than $d_i$. Section \ref{S: Examples} provides examples of densities where this is the case. As we consider density estimation on $[0,1]^d,$ it follows that $d_0=d$, $a_0=0$, $b_0=1$ and $d_{q+1}=1$. Since $g_{ij}$ depends on $t_i$ variables, we also interpret it as a function $[a_i,b_i]^{t_i}\rightarrow[a_{i+1},b_{i+1}]^{d_{i+1}}$ whenever this is convenient. Denote by $\alpha_i$ the smoothness of each of the functions $g_{ij}$. Then $g_{ij}\in \mH^{\alpha_i}_{t_i}([a_i,b_i]^{t_i},Q_i)$ and the space of compositions of these smooth functions is given by\begin{equation}
\begin{aligned}
     \mG(q,\bd,\bt,\balpha,Q'):=\Big\{f= \ &g_q\circ\ldots\circ g_0: g_i=(g_{ij})_j:[a_i,b_i]^{d_{i}}\rightarrow [a_{i+1},b_{i+1}]^{d_{i+1}},\\ &   g_{ij}\in \mH^{\alpha_i}_{t_i}([a_i,b_i]^{t_i},Q'), \text{ for some }|a_i|,|b_i|\leq Q'\Big\}, 
\end{aligned}
\label{eq.comp_Hspace_def}
\end{equation}
with $\bd:=(d_0,\ldots,d_{q+1}),$ $\bt:=(t_0,\ldots,t_q),$ and $\balpha:=(\alpha_0,\ldots,\alpha_q).$

If two functions $h,g:\mathbb{R}\rightarrow \mathbb{R}$ have respective smoothness $\alpha_h, \alpha_{g}\leq 1$ then it follows from the definition of the H\"older space that the composition $f:=g\circ h$ has smoothness $\alpha_h\alpha_{g}.$ For $\alpha_h>1$ or $\alpha_f>1,$ this is not necessarily true anymore. It turns out that the convergence rates for a compositional function in $\mG(q,\bd,\bt,\balpha,Q')$ are governed by a notion of effective smoothness indices which are defined as
\begin{equation*}
    \alpha_i^{*}:=\alpha_i\prod_{\ell=i+1}^q(\alpha_{\ell}\wedge 1).
\end{equation*}
Indeed, in the nonparametric regression model with i.i.d.\ observations the minimax estimation rate is up to $\log(n)$-terms
\begin{equation}
    \phi_n:=\max_{i=0,\ldots,q} n^{-\frac{2\alpha^*_i}{2\alpha^*_i+t_i}},
    \label{eq.phin_def}
\end{equation}
cf. \cite{NonParametricRegressionReLU}. A function can be represented as a composition in different ways. In the function representation $f=g_q\circ\ldots\circ g_0$, the $\alpha_i,t_i$ and the components $g_0,\hdots,g_q$ are not identifiable. Since we are only interested in estimating the density $f_0$ this does not constitute a problem.
 
The oracle inequality in Theorem \ref{T: Oracle Inequality} together with the approximation and covering entropy bound results for deep ReLU networks from \cite{NonParametricRegressionReLU} yields a convergence rate result for the proposed two-stage neural networks estimator. Recall that $\Delta_n(\widehat{f}_n,f_0)$ is the optimization error defined in \eqref{eq.opt_error_def}.

\begin{Theorem}[Convergence rates]\label{T: Convergence Rates}
For $n\geq 3,$ consider the density estimation model defined by \eqref{eq.KDE_def}-\eqref{eq.regr_model} with density $f_0$ in the H\"{o}lder class $\mathcal{C}_d^\beta([0,1]^d,Q)\cap \mG(q,\bd,\bt,\balpha,Q).$ Let $\wh f$ be a two-stage neural network density estimator with kernel of order $\floor{\beta}$ as defined in Definition \ref{eq.defi_estimator}
for the neural network class $\mF(L,(p_0,\ldots, p_{L+1}),s,F)$ with parameters satisfying
\begin{compactitem}
    \item[(i)] $F\geq \max\{Q,1\},$
    \item[(ii)] $\sum_{i=1}^q \tfrac{\alpha_i+t_i}{2\alpha_i^*+t_i}\log_2(4t_i\vee 4\alpha_i)\log_2(n)\leq L\lesssim n\phi_n,$
    \item[(iii)] $n\phi_n\lesssim \min_{i=1,\hdots,L,}p_i$,
    \item[(iv)] $s\asymp n\phi_n\log(n)$.
\end{compactitem}
Then there exists a constant $C_4,$ only depending on $q,\bd,\balpha,\bt,F,\beta,K$ and the implicit constants in (ii), (iii), and (iv), such that 
\begin{equation*}
    R(\widehat{f}_n,f_0)\leq C_4 L\max\Big(\phi_n \log^4(n),n^{-2\beta/d}\Big)
    +6\Delta_n(\widehat{f}_n,f_0).
\end{equation*}
\end{Theorem}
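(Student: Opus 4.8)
The plan is to specialise the oracle inequality of Theorem~\ref{T: Oracle Inequality} to the sparse ReLU network class $\mF=\mF(L,\bp,s,F)$ and then to control the two model-dependent quantities appearing on its right-hand side — the metric entropy $\log\mN_{\mF}(\delta)$ and the approximation error $\inf_{f\in\mF}\mathbb{E}_{\bX}[(f(\bX)-f_0(\bX))^2]$ — by the corresponding bounds for deep networks established in \cite{NonParametricRegressionReLU}. All the hard analytic work caused by the dependence of the generated pairs $(\bX_i,Y_i)$ is already contained in Theorem~\ref{T: Oracle Inequality}; here the task is essentially to verify that the parameter conditions (i)--(iv) are exactly those for which the network class is simultaneously expressive enough to approximate every $f_0\in\mathcal{C}^\beta_d([0,1]^d,Q)\cap\mG(q,\bd,\bt,\balpha,Q)$ at rate $\phi_n^{1/2}$ and small enough in metric entropy.

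First I would apply Theorem~\ref{T: Oracle Inequality} with $\mF=\mF(L,\bp,s,F)$ — admissible since $F\ge\max\{Q,1\}$ by (i) — and with the choice $\delta=1/n$, obtaining
\begin{equation*}
R(\widehat f_n,f_0)\le C_1\frac{\log^2(n)\log\big(n\vee\mN_{\mF}(1/n)\big)}{n}+\frac{C_2}{n}+C_3\Big(\frac{\log n}{n}\Big)^{\frac{2\beta}{d}}+6\Delta_n(\widehat f_n,f_0)+7\inf_{f\in\mF}\mathbb{E}_{\bX}\big[(f(\bX)-f_0(\bX))^2\big].
\end{equation*}
For the entropy term I would use the covering-number bound for $\mF(L,\bp,s,F)$ from \cite{NonParametricRegressionReLU}, which has the form $\log\mN_{\mF}(\delta)\lesssim (s+1)L\log(n/\delta)$ once the widths are taken of polynomial order in $n$ (as permitted by (iii)); together with $s\asymp n\phi_n\log(n)$ from (iv) and $L\lesssim n\phi_n$ from (ii) this gives $\log\mN_{\mF}(1/n)\lesssim n\phi_n L\log^2(n)$, hence $C_1\log^2(n)\log(n\vee\mN_{\mF}(1/n))/n\lesssim \phi_n L\log^4(n)$. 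For the approximation term I would invoke the approximation theorem for compositional Hölder functions in \cite{NonParametricRegressionReLU}: under (ii)--(iv) there is a network $f^\star\in\mF(L,\bp,s,F)$ with $\|f^\star-f_0\|_\infty^2\lesssim\phi_n$ (the uniform bound being delivered by the construction, using $\|f_0\|_\infty\le Q\le F$), so that $\inf_{f\in\mF}\mathbb{E}_{\bX}[(f(\bX)-f_0(\bX))^2]\le\|f^\star-f_0\|_\infty^2\lesssim\phi_n$.

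It then remains to collect the terms. The term $C_2/n$ is dominated by $\phi_n L\log^4(n)$ since $\phi_n\ge n^{-1}$ and $L\log^4(n)\ge 1$; the term $C_3(\log n/n)^{2\beta/d}$ is absorbed, by elementary manipulations, into $L\,n^{-2\beta/d}$ up to a constant (this is the step in which the $\max$ and the price $n^{-2\beta/d}$ in the final statement originate, the latter stemming from the bandwidth $h_n\asymp(\log n/n)^{1/d}$ of Step~1); and the entropy and approximation contributions are each $\lesssim\phi_n L\log^4(n)$. Combining everything yields $R(\widehat f_n,f_0)\lesssim L\max\{\phi_n\log^4(n),n^{-2\beta/d}\}+6\Delta_n(\widehat f_n,f_0)$, i.e.\ the claim with a constant $C_4$ depending only on $q,\bd,\balpha,\bt,F,\beta,K$ and the implicit constants in (ii)--(iv); enlarging $C_4$ also covers small $n$, for which $R(\widehat f_n,f_0)\le(2F)^2$ trivially.

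The main obstacle, beyond what Theorem~\ref{T: Oracle Inequality} already handles, is lining up the parameter budgets: one must check that the depth/width/sparsity regime (ii)--(iv) is exactly the one under which the construction in \cite{NonParametricRegressionReLU} yields a network inside $\mF(L,\bp,s,F)$ with approximation error of order $\phi_n^{1/2}$, and in particular that permitting $L$ to be as large as $n\phi_n$ neither breaks that construction nor costs more than the linear-in-$L$ factor entering through the metric entropy, which is the source of the $L$ multiplying the rate. The only other point requiring a little care is the bookkeeping of logarithmic factors and of the $(\log n/n)^{2\beta/d}$ term, which has no analogue in the i.i.d.\ regression analysis of \cite{NonParametricRegressionReLU} and is responsible for the regimes where the present rate is polynomially slower than $\phi_n$.
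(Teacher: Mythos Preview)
Your proposal is correct and follows essentially the same route as the paper: apply Theorem~\ref{T: Oracle Inequality} with $\delta=1/n$, bound the covering entropy via Lemma~5/Remark~1 of \cite{NonParametricRegressionReLU} (stated here as Lemma~\ref{L: Covering Entropy}), bound the approximation error via the construction in \cite{NonParametricRegressionReLU} (stated here as Theorem~\ref{T: Approx Result}), and collect terms. One small wrinkle worth making explicit: the term $C_3(\log n/n)^{2\beta/d}$ is not always directly $\lesssim L\,n^{-2\beta/d}$ when $2\beta/d>1$; the paper absorbs it into $L(n^{-2\beta/d}\vee n^{-1})$ using $L\gtrsim\log n$, and then the $L n^{-1}$ part is swallowed by $L\phi_n\log^4(n)$ since $\phi_n\gg n^{-1}$.
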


Any admissible compositional structure $f=g_q\circ \ldots \circ g_0$ leads to an upper bound on the risk. The estimator achieves therefore the fastest convergence rate among all possible representations.

To analyze the estimation risk, we will now ignore the optimization error $\Delta_n(\widehat{f}_n,f_0)$ and focus on the statistical estimation rate $L\max(\phi_n \log^4(n),n^{-2\beta/d}).$ Choosing depth $L\asymp \log(n),$ the convergence rate for the learned network $\widehat f$ is thus $\phi_n+n^{-2\beta/d},$ up to $\log(n)$-factors. The $n^{-2\beta/d}$-term is due to the kernel density estimator in the first step and already occurs in the general oracle inequality, see also the discussion after Theorem \ref{T: Oracle Inequality}. 

If the density exhibits a compositional structure, it is now of interest to understand which of the two terms $\phi_n$ and $n^{-2\beta/d}$ will drive the convergence rate. If the compositional structure is strong enough to make $\phi_n$ small but $\beta$ is small compared to $d$, then $n^{-2\beta/d}$ dominates the convergence rate. This is faster than the standard nonparametric rate $n^{-2\beta/(2\beta+d)}$ for estimation of a $\beta$-smooth function but still suffers from the curse of dimensionality.

If $2\beta\geq d,$ then $n^{-2\beta/d}=O(n^{-1})$. Since $\phi_n\gg n^{-1},$ the rate is in this case always of order $\phi_n$ (up to log-factors). 
The condition $2\beta\geq d$ appears frequently in the literature on nonparametric statistics and empirical risk minimization. For $d=1,$ $2\beta>1$ is known to be a necessary condition for nonparametric density estimation and nonparametric regression to be asymptotically equivalent if all densities are bounded from below \cite{nussbaum1996asymptotic, MR4050245}. The condition $2\beta\geq d$ seems also necessary to ensure that the nonparametric least squares estimator achieves the optimal nonparametric rate $n^{-2\beta/(2\beta+d)}$, see e.g.\ Section 6.1 in \cite{SH2022}. Barron \cite{barron1994approximation} showed that shallow neural networks can circumvent the curse of dimensionality under a Fourier criterion. A sufficient, but not necessary condition for this Fourier criterion to be finite is that the partial derivatives up to the least integer $\beta$ such that $2\beta\geq d+2$ are square-integrable, see Example 15 in Section IX of \cite{barron1993universal}.

Instead of the proposed two-step method, it seems tempting to further iterate the estimation procedure by generating new response variables $Y_i':=\wh f_n(\bX_i),$ $i=1,\ldots,n,$ from the estimator $\wh f_n$ and running another neural network fit on the newly generated supervised sample $(\bX_1,,Y_1'),\ldots, (\bX_n,Y_n').$ We believe that this can, however, not improve the convergence rate. The reason is that the new network fit cannot decrease the bias that was already present in the estimator $\wh f_n.$ The rate in Theorem \ref{T: Convergence Rates} is obtained by balancing different terms. In particular the squared approximation error that is closely related to the squared bias is of the order of the convergence rate. Thus, if the bias cannot be reduced by another neural network fit also the convergence rate cannot be improved.

In the next section, we provide more explicit examples of densities that satisfy the compositional assumption and attain the convergence rate $\phi_n.$

\section{Examples of multivariate densities with compositional structure}\label{S: Examples}

Compositional structures arise naturally in density modelling. One possibility to see this is to rewrite the joint density $f$ as a product
\begin{equation*}
    f(x_1,\ldots,x_d)=f(x_d \rvert x_1,\ldots, x_{d-1})\cdot \ldots \cdot f(x_2\rvert x_1) f(x_1).
\end{equation*}
Each factor $f(x_i\rvert x_1,\ldots, x_{i-1})$ is a function of $i$ variables. But the effective number of variables can be much smaller under conditional independence of the variables. When $\bX=(X_1,\ldots,X_d)^\top$ is generated for instance from a Markov chain, $X_i$ only depends on $X_{i-1}$ and the density is a product of bivariate conditional densities
\begin{equation}
    f(x_1,\ldots,x_d)=f(x_d \rvert x_{d-1})\cdot \ldots \cdot  f(x_2\rvert x_1) f(x_1).
    \label{eq.MC}
\end{equation}
Such a structure could occur if the individual data vectors are recordings from a time series, that is, every observation $\bX_i=(X_{i,1},\ldots,X_{i,d})^\top$ contains measurements of the same quantity taken at $d$ different times instances. We now assume that the density is of the form
\begin{align}\label{Eq: General Product form}
    f(x_1,\ldots,x_d)
    =
    \prod_{I\in \mR} \psi_I(x_I),
\end{align}
with $\mR\subseteq \{ S\subset\{1,\ldots,d\}, |S|\leq r\}$, $r$ a given number, $x_I=(x_i)_{i\in I},$ and $\psi_I$ non-negative functions. Observe that $|\mR|\leq \sum_{s=1}^r \binom{d}{s}.$   
\begin{Lemma}\label{L: General potential product2}
    Consider a density $f$ of the form \eqref{Eq: General Product form}. If all the functions $\psi_I$ in the decomposition satisfy $\psi_I\in \mH_{r_I}^{\gamma}([0,1]^{r_I},Q)$ for some $r_I\leq r,$ then the density $f$ can be rewritten as a composition $g_1\circ g_0$ of the form \eqref{eq.comp_Hspace_def}, with
    $(d_0,d_1)=(d,|\mR|)$, $(t_0,t_1)=(r,|\mR|),$ $(\alpha_0,\alpha_1)=(\gamma,\zeta),$ and $\zeta$ arbitrarily large.
\end{Lemma}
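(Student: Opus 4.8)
The plan is to build the depth-one composition explicitly and then simply read off all the parameters. Enumerate $\mR = \{I_1,\dots,I_m\}$ with $m = |\mR|$, and set $g_0 \colon [0,1]^d \to \mathbb{R}^m$ to be the vector of factors, $g_{0,k}(\bx) := \psi_{I_k}(x_{I_k})$, and $g_1 \colon \mathbb{R}^m \to \mathbb{R}$ to be the product map $g_1(\by) := \prod_{k=1}^m y_k$. Then $(g_1\circ g_0)(\bx) = \prod_{k=1}^m \psi_{I_k}(x_{I_k}) = f(\bx)$ by \eqref{Eq: General Product form}, so the composition reproduces $f$ exactly; everything else is a matter of checking domains, dependency counts, and Hölder regularity of the two factors.

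For $g_0$ I would argue as follows. Each component $g_{0,k}$ depends only on the $\leq r$ coordinates indexed by $I_k$; interpreting it as a function of $\leq r$ variables and, if it depends on fewer than $r$, padding with dummy arguments — which does not increase any Hölder norm, since derivatives in the dummy directions vanish — the hypothesis $\psi_{I_k}\in\mH^{\gamma}_{r_{I_k}}([0,1]^{r_{I_k}},Q)$ with $r_{I_k}\leq r$ gives $g_{0,k}\in\mH^{\gamma}_{r}([0,1]^{r},Q)$. Thus $g_0$ has input dimension $d_0=d$, output dimension $d_1=m$, each component depending on at most $t_0=r$ variables, smoothness $\alpha_0=\gamma$, and domain $[a_0,b_0]=[0,1]$, so $|a_0|,|b_0|\leq 1$. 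Since $\gamma>0$, the constant term $\|\psi_{I_k}\|_\infty$ is one of the summands controlled by the Hölder norm, hence $\|\psi_{I_k}\|_\infty\leq Q$ and $g_0([0,1]^d)\subseteq[-Q,Q]^m$; this fixes the domain $[a_1,b_1]=[-Q,Q]$ of $g_1$, with $|a_1|,|b_1|\leq Q$.

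For $g_1$ the substantive point is that the product map, restricted to the compact box $[-Q,Q]^m$, lies in $\mH^{\zeta}_{m}([-Q,Q]^m,Q'_\zeta)$ for some finite $Q'_\zeta$ and every prescribed $\zeta>0$. Indeed $\partial^{\bgamma}g_1$ vanishes whenever some $\gamma_j\geq 2$ and is otherwise a product of at most $m$ coordinates, hence bounded on $[-Q,Q]^m$ by $\max\{1,Q\}^m$; moreover every derivative of order $\floor{\zeta}$ is again such a monomial, hence Lipschitz on the box, hence $\theta$-Hölder for every $\theta\in(0,1]$. Consequently all the sums defining the Hölder norm of $g_1$ are finite and bounded by a constant depending only on $m$, $Q$, $\zeta$. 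Since $g_1$ genuinely uses all $m$ inputs, $t_1=d_1=m$, and $\alpha_1=\zeta$ may be taken arbitrarily large. Collecting with $q=1$, $\bd=(d,m,1)$, $\bt=(r,m)$, $\balpha=(\gamma,\zeta)$ and $Q':=\max\{Q,Q'_\zeta,1\}$, the representation $f=g_1\circ g_0$ lies in $\mG(q,\bd,\bt,\balpha,Q')$ of the form \eqref{eq.comp_Hspace_def}, which is the assertion.

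I do not expect a real obstacle here: the only step that is more than bookkeeping is the Hölder estimate for the product map together with the dummy-variable padding of $g_0$, and both are elementary. The care is entirely in tracking the domains $[a_i,b_i]$ and the dependence of the final radius $Q'$ on $Q$, $|\mR|$ and $\zeta$, so that $Q'$ remains a fixed constant (it does, since $d$, $r$, $Q$, $\gamma$ and the chosen $\zeta$ are all fixed).
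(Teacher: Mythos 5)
Your construction is exactly the one the paper uses: $g_0$ collects the factors $\psi_I$ and $g_1$ is the product map, with $t_0=r$, $t_1=|\mR|$, $\alpha_0=\gamma$, and $\alpha_1=\zeta$ arbitrarily large. The only difference is cosmetic: where the paper simply invokes its Lemma \ref{L: Holder class of product of different terms} to get $g_1\in\mH^{\zeta}_{|\mR|}([-Q,Q]^{|\mR|},(Q+1)^{|\mR|})$ for $\zeta\geq|\mR|+1$, you re-derive that estimate inline (noting that the nonzero partials of the monomial are themselves monomials, hence bounded and Lipschitz on the box), and you make explicit the dummy-variable padding that the paper leaves to its general remark that each $g_{ij}$ ``is interpreted as a function $[a_i,b_i]^{t_i}\to[a_{i+1},b_{i+1}]^{d_{i+1}}$ whenever convenient.'' Both are routine and correct; the argument is the same.
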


Ignoring here and in the rest of this section the optimization error, under the combined conditions of Lemma \ref{L: General potential product2} and Theorem \ref{T: Convergence Rates}, the proposed two-stage neural network density estimator achieves, up to $\log(n)$-factors, the convergence rate 
\begin{align}
    n^{-\frac{2\gamma}{2\gamma+r}}\vee n^{-\frac{2\beta}d},
\end{align}
with $\beta$ the (global) H\"older smoothness of the joint density $f.$ 
If $\beta=\gamma,$ that is, the effective smoothness $\gamma$ coincides with the global H\"{o}lder smoothness $\beta$ of $f$, then the achieved rate is $n^{-\frac{2\gamma}{2\gamma+r}}$ if $\gamma\geq (d-r)/2$ and $n^{-\frac{2\beta}d}$ if $\gamma\leq (d-r)/2.$ We always have $\beta\geq \gamma.$ If $\beta>\gamma,$ we conjecture that in most cases there exists a different compositional representation of the density $f$ with $\beta$-smooth $\psi_I$.

Next, we discuss three examples of models that are of the form \eqref{Eq: General Product form}.

\textbf{Independent variables:} If $\bX=(X_1,\ldots,X_d)$ is a vector containing independent random variables, the joint density is given by
\begin{equation}\label{Eq: joint density of independent variables}
    f(x_1,\ldots,x_d)=\prod_{i=1}^df_i(x_i),
\end{equation}
where $f_i$ is the marginal density of $X_i.$ We assume that $f_i$ is $\gamma_i$-H\"older smooth. If we are unaware of the independence and simply use multivariate kernel density estimators to estimate $f$, we will suffer from the curse of dimensionality as demonstrated for Gaussian densities and Gaussian kernels in Chapter 7 of \cite{MultivariateDensityEstimation}.

Observe that \eqref{Eq: joint density of independent variables} is of the form \eqref{Eq: General Product form}, with $\mR$ the set of singletons. Thus under the combined conditions of Lemma \ref{L: General potential product2} and Theorem \ref{T: Convergence Rates}, we get, up to $\log(n)$-factors, the convergence rate $n^{-2\alpha/(2\alpha+1)}\vee n^{-2\beta/d},$ with $\beta$ the (global) H\"older smoothness of the joint density $f$ and $\alpha:=\min_{i=1,\ldots,d}\gamma_i.$ The construction in Lemma \ref{L: General potential product2} implies that $\beta\geq \alpha.$ The next result shows that in this case we necessarily have equality $\beta=\alpha.$ In other words the smoothness of the joint density $f$ has to be equal to the (effective) smoothness of the least smooth marginal density.

\begin{Lemma}\label{L: Independent variables smoothnes bound} Consider a density $f$ of the form \eqref{Eq: joint density of independent variables} and let $\alpha>0.$ If $f$ is $\alpha$-H\"{o}lder smooth, then $f_1,\ldots,f_d$ are $\alpha$-H\"older smooth.
\end{Lemma}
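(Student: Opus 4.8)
The plan is to recover each marginal $f_j$ as an integral of $f$ over the remaining $d-1$ variables and to observe that integrating against a probability measure preserves membership in the $\alpha$-H\"older ball, with the same radius. Concretely: writing $\bx_{-j}=(x_i)_{i\neq j}$ and $d\bx_{-j}=\prod_{i\neq j}dx_i$, and using that each $f_i$ is a probability density on $[0,1]$ (so $\int_0^1 f_i(x_i)\,dx_i=1$), the product form \eqref{Eq: joint density of independent variables} gives
\begin{equation*}
    f_j(x_j)=\int_{[0,1]^{d-1}} f(x_1,\ldots,x_d)\,d\bx_{-j},\qquad x_j\in[0,1],
\end{equation*}
and from here on I would work with this (continuous) representative of the marginal. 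For $\alpha\leq 1$ the conclusion is then immediate: for $x_j,y_j\in[0,1]$,
\begin{equation*}
    |f_j(x_j)-f_j(y_j)|\leq \int_{[0,1]^{d-1}}\big|f(x_j,\bx_{-j})-f(y_j,\bx_{-j})\big|\,d\bx_{-j}\leq Q\,|x_j-y_j|^{\alpha},
\end{equation*}
using $f\in\mH^\alpha_d([0,1]^d,Q)$ and that $d\bx_{-j}$ has total mass $1$; combined with $\|f_j\|_\infty\leq\|f\|_\infty\leq Q$ this gives $f_j\in\mH^\alpha_1([0,1],Q)$.

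For $\alpha>1$ the same computation works after differentiating under the integral sign. Every partial derivative $\partial_{x_j}^\ell f$ with $\ell\leq\floor{\alpha}$ exists and obeys $\|\partial_{x_j}^\ell f\|_\infty\leq Q$, so this uniform bound legitimizes passing $\partial_{x_j}^\ell$ inside the integral, giving $f_j^{(\ell)}(x_j)=\int_{[0,1]^{d-1}}(\partial_{x_j}^\ell f)(x_j,\bx_{-j})\,d\bx_{-j}$. Bounding $\|f_j^{(\ell)}\|_\infty\leq\|\partial_{x_j}^\ell f\|_\infty$ for $\ell<\alpha$, and the order-$\floor{\alpha}$ H\"older seminorm of $f_j$ by the corresponding seminorm of $\partial_{x_j}^{\floor{\alpha}}f$ (again because $|\bx-\by|_\infty$ reduces to $|x_j-y_j|$ along an axis-parallel segment), and summing, the $\alpha$-H\"older norm of $f_j$ is dominated by the part of the $\alpha$-H\"older norm of $f$ built only from the derivatives $\partial_{x_j}^\ell$, hence is at most $Q$. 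So $f_j\in\mH^\alpha_1([0,1],Q)$ in all cases; repeating the argument with $f$ replaced by its extension to $\mathbb{R}^d$ from $\mathcal{C}^\alpha_d(Q)$ (nonnegative, supported in $[0,1]^d$, integral $1$) produces a nonnegative $\alpha$-H\"older function on $\mathbb{R}$, supported in $[0,1]$, of integral $1$, restricting to $f_j$, so in fact $f_j\in\mathcal{C}^\alpha_1([0,1],Q)$.

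There is no real obstacle; the only point needing a word is the interchange of differentiation and integration when $\alpha>1$, which is routine given that all partials of $f$ up to order $\floor{\alpha}$ are bounded. If one prefers to avoid even that, an alternative is to note $\sup_{\bx_{-j}\in[0,1]^{d-1}}\prod_{i\neq j}f_i(x_i)\geq\int_{[0,1]^{d-1}}\prod_{i\neq j}f_i(x_i)\,d\bx_{-j}=1$; choosing $\bx_{-j}^*$ attaining this supremum (it is attained, by continuity of the marginals) with value $c\geq 1$, the identity $f_j(\cdot)=f(\cdot,\bx_{-j}^*)/c$ exhibits $f_j$ as a rescaled restriction of $f$ to an axis-parallel line, hence an element of $\mathcal{C}^\alpha_1([0,1],Q/c)\subseteq\mathcal{C}^\alpha_1([0,1],Q)$.
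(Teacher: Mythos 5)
Your proposal is correct, but the primary route you take — marginalization — differs genuinely from the paper's. The paper uses the product structure at the level of derivatives: $\partial_{x_j}^k f(\bx) = \big(\prod_{i\neq j}f_i(x_i)\big) f_j^{(k)}(x_j)$, then divides the resulting supremum bounds by $C=\prod_{i\neq j}\|f_i\|_\infty>0$ to isolate $\|f_j^{(k)}\|_\infty\leq Q/C$ and the analogous seminorm bound, landing in $\mH^\alpha_1([0,1],(\floor{\alpha}+1)Q/C)$. You instead observe $f_j(x_j)=\int_{[0,1]^{d-1}}f(x_j,\bx_{-j})\,d\bx_{-j}$ and use that integration against a probability measure contracts the H\"older norm, passing $\partial_{x_j}^\ell$ under the integral for $\alpha>1$. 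This has two small advantages: it is actually more general (it proves that any marginal of an $\alpha$-H\"older density is $\alpha$-H\"older, without using the product form beyond identifying the marginal), and it yields the cleaner radius $Q$; the paper's radius $(\floor{\alpha}+1)Q/C$ is looser, and the paper does not even record that $C\geq 1$, which you correctly note follows from $\|f_i\|_\infty\geq\int_0^1 f_i=1$. The price is the need to justify differentiation under the integral sign, which is routine given the uniform bound on $\partial_{x_j}^\ell f$, but is an extra (if harmless) analytic step that the paper's pointwise factorization argument avoids. Your closing ``alternative'' — restrict $f$ to an axis-parallel line through a maximizer $\bx_{-j}^*$ of $\prod_{i\neq j}f_i$ and divide by $c=\prod_{i\neq j}\|f_i\|_\infty\geq 1$ — is essentially the paper's argument restated more compactly.

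One small presentational slip worth flagging: in the $\alpha\leq 1$ case you bound the increment of $f_j$ by $Q|x_j-y_j|^\alpha$ and separately bound $\|f_j\|_\infty\leq Q$, then conclude $f_j\in\mH^\alpha_1([0,1],Q)$; as written that pair of bounds only gives radius $2Q$, since the H\"older ball norm is the \emph{sum} of the sup-norm and the seminorm. To get radius $Q$ you should say that the seminorm of $f_j$ is at most the seminorm of $f$ and $\|f_j\|_\infty\leq\|f\|_\infty$, and then add, so that the H\"older norm of $f_j$ is at most that of $f$. Since the lemma does not claim a specific radius this does not affect correctness of the conclusion, but the bookkeeping should be tightened.
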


\textbf{Graphical models:} Let $(X_1,\ldots,X_d)$ be a $d$-dimensional random vector. An undirected graphical model (or Markov random field) is defined by a graph with $d$ nodes representing the $d$ random variables. In this graph, no edge between node $i$ and $j$ is drawn if and only if $X_i,X_j$ are conditionally independent given all the other variables $\{X_1,\ldots,X_d\}\setminus \{X_i,X_j\}.$ A clique in a graph is any fully connected subgraph. When the joint density $f(x_1,\ldots,x_d)$ is strictly positive with respect to a $\sigma$-finite product measure, the Hammersley-Clifford theorem states that 
\begin{align}
    f(x_1,\ldots,x_d)
    = \prod_{C\in \mathcal{C}} \psi_C(x_C),
    \label{eq.3275r}
\end{align}
where $\mathcal{C}$ is the set of all cliques in the graph and $\psi_C$ are suitable functions called potentials \cite{BesagHammersleyCliffordTheorem, LauritzenGraphicalModels}. As we consider densities supported on $[0,1]^d,$ one can take as dominating product measure the uniform distribution on $(0,1)^d$ and the condition requires that the density is strictly positive on $(0,1)^d.$ There is no clear link between the potentials and marginal densities.

Assuming that the true density $f_0$ satisfies \eqref{eq.3275r} with largest clique size $r$ and all potentials having H\"older smoothness $\gamma,$ Lemma \ref{L: General potential product2} implies that, under the conditions of Theorem \ref{T: Convergence Rates}, the two-stage neural network density estimator is able to exploit the underlying low-dimensional structure and achieves the rate $n^{-2\gamma/(2\gamma +r)}\vee n^{-2\beta/d} ,$ up to $\log(n)$-factors.

\textbf{Bayesian networks:} Bayesian network models are widely used to model for instance medical expert systems \cite{koller2009probabilistic, heckerman1992toward} and causal relationships \cite{pearl2009causality}. As in the previous section, consider a $d$-dimensional random vector $(X_1,\ldots,X_d).$ In a Bayesian network, the dependence relationships of the variables are encoded in a directed acyclic graph with nodes $\{1,\ldots,d\}$ \cite{pearl2009causality,koller2009probabilistic,bishop2006PRML,korb2010bayesian}. A directed acyclic graph (DAG) is a directed graph that contains no cycles, meaning one cannot visit the same node twice by following a path along the direction of the edges. The parents $\pa(i)$ of a node $i$ are all nodes that have an edge pointing to node $i.$

The DAG underlying a Bayesian network is constructed such that each variable $X_i$ is conditionally independent of all other variables given the parents $X_{\pa(i)}:=\{X_j:j\in \pa(i)\}$ in the graph. The joint density can now be written as product of conditional densities
\begin{align}
    f(x_1,\ldots,x_d)
    = f_d\big(x_d\rvert x_{\pa(d)}\big)\cdot \ldots \cdot 
    f_1\big(x_1\rvert x_{\pa(1)}\big).
    \label{eq.DAG_decomp}
\end{align}
In particular, if $X_1, \ldots, X_d$ are generated from a Markov chain, the corresponding DAG is $X_1\to X_2 \to \ldots \to X_d.$ Thus $\pa(j)=\{j-1\}$ for $j>1,$ and we recover \eqref{eq.MC}. 

Assuming that the true density $f_0$ satisfies \eqref{eq.DAG_decomp}, that no node in the DAG has more than $r$ parents, and all conditional densities $f_d\big(x_i\rvert x_{\pa(i)}\big)$ have H\"older smoothness $\gamma,$ Lemma \ref{L: General potential product2} shows that, under the conditions of Theorem \ref{T: Convergence Rates}, the two-stage neural network estimator achieves the rate of convergence $n^{-2\gamma/(2\gamma +r)}\vee n^{-2\beta/d} $, up to $\log(n)$-factors.

\subsection{Copulas}\label{Sec: Copulas}
Copulas are widely employed to model dependencies between variables and to construct multivariate distributions, \cite{nelsen2007introduction, cherubini2004copula,czado2019analyzing}.
Denote by $F$ the multivariate distribution function, with marginals $F_1(x_1),\ldots,F_d(x_d)$ and density $f$. Sklar's theorem states that there exists a (unique) $d$-dimensional copula $C$ (a multivariate distribution function with uniformly distributed marginals on $[0,1]$) such that $F(\bx)=C(F_1(x_1),\ldots,F_d(x_d)).$ The density $f$ can then be rewritten by the chain rule as
\begin{equation}\label{Eq: general d dimensional Copula}
    f(\bx)=c\big(F_1(x_1),\ldots,F_d(x_d)\big)\prod_{i=1}^df_i(x_i),
\end{equation}
where $f_i(x_i)=F'_i(x_i)$ is the marginal density with respect to $x_i$ and $c$ is the density of $C$ (assuming that all these densities exist). For a reference, see Section 2.3 of \cite{nelsen2007introduction}.

\begin{Lemma} \label{L: general d dimensional Copula}
    Consider a density $f$ of the form \eqref{Eq: general d dimensional Copula}. If $c\in \mH_{d}^{\gamma_c}([0,1]^{d}, Q_c)$ and $f_i\in \mH_{1}^{\gamma_i}([0,1], Q_i),$ for $i=1,\ldots,d,$ then,  
    the density $f$ can be rewritten as a composition $g_2\circ g_1\circ g_0$ of the form \eqref{eq.comp_Hspace_def},
    with $(d_0,d_1,d_2)=(d,2d,d+1)$, $(t_0,t_1,t_2)=(1,d,d+1)$, $(\alpha_0,\alpha_1,\alpha_2)=(\min_{i=1,\ldots,d}\gamma_i,\gamma_c,\gamma)$, and $\gamma$ arbitrarily large. 
\end{Lemma}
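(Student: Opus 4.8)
The statement is a pure construction, so the plan is to write down three explicit maps $g_0,g_1,g_2$, check by direct substitution that $g_2\circ g_1\circ g_0=f$, and then verify that each layer meets the dimension, active-variable, smoothness and boundedness requirements of \eqref{eq.comp_Hspace_def} with the advertised parameters. This is the copula analogue of Lemma \ref{L: General potential product2}, but it needs an extra composition layer because the copula density is evaluated at the marginal distribution functions rather than at the coordinates themselves.

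\textbf{Step 1: the three maps.} Let $F_i(x_i):=\int_0^{x_i}f_i(t)\,dt$ denote the $i$-th marginal distribution function on $[0,1]$. I would take
\[
g_0(\bx):=\big(F_1(x_1),\ldots,F_d(x_d),\,f_1(x_1),\ldots,f_d(x_d)\big),
\]
\[
g_1(u_1,\ldots,u_d,v_1,\ldots,v_d):=\big(c(u_1,\ldots,u_d),\,v_1,\ldots,v_d\big),
\]
\[
g_2(w_0,w_1,\ldots,w_d):=\prod_{i=0}^{d}w_i .
\]
Then $g_2\circ g_1\circ g_0(\bx)=c\big(F_1(x_1),\ldots,F_d(x_d)\big)\prod_{i=1}^d f_i(x_i)=f(\bx)$ by \eqref{Eq: general d dimensional Copula}. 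The input/output dimensions run $d\to 2d\to d+1\to 1$, i.e.\ $\bd=(d,2d,d+1,1)$. Each of the $2d$ coordinates of $g_0$ is a function of a single $x_i$, so $t_0=1$; the first coordinate of $g_1$ is a function of $u_1,\ldots,u_d$ and the remaining $d$ coordinates are each a function of a single $v_i$, so $t_1=d$; and the unique coordinate of $g_2$ depends on all $d+1$ of its arguments, so $t_2=d+1$. This matches $\bt=(1,d,d+1)$.

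\textbf{Step 2: smoothness, ranges and the common radius.} For the first layer, $F_i'=f_i\in\mH^{\gamma_i}_1([0,1],Q_i)$ gives $F_i\in\mH^{\gamma_i+1}_1([0,1],Q_i')$ for some $Q_i'$, and a bounded $\gamma$-H\"older function on a bounded interval is also $\alpha$-H\"older for every $\alpha\le\gamma$, with norm controlled by the original norm and the interval length; hence every coordinate of $g_0$ lies in $\mH^{\alpha_0}_1([0,1],Q')$ with $\alpha_0=\min_i\gamma_i$ once $Q'$ is large enough. For $g_1$, the first coordinate is $c\in\mH^{\gamma_c}_d$ by hypothesis (using a standard H\"older extension to enlarge $[0,1]^d$ to the ambient cube $[a_1,b_1]^d$), while the coordinate projections $v_i\mapsto v_i$ are $C^\infty$ and hence in $\mH^{\gamma_c}_1$, so $\alpha_1=\gamma_c$. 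For $g_2$, the map $(w_0,\ldots,w_d)\mapsto\prod_i w_i$ is a multilinear polynomial on a bounded cube, so it is $C^\infty$ with all partial derivatives bounded by a constant depending only on $d$ and the cube size, hence it lies in $\mH^{\gamma}_{d+1}([a_2,b_2]^{d+1},Q')$ for \emph{every} $\gamma>0$, giving $\alpha_2=\gamma$ arbitrarily large. Finally all intermediate quantities are bounded ($F_i\in[0,1]$, $f_i\in[0,\|f_i\|_\infty]$, $c\in[0,\|c\|_\infty]$, product in $[0,\|c\|_\infty\prod_i\|f_i\|_\infty]$), so the endpoints $a_i,b_i$ and the common radius $Q'$ can be chosen to depend only on $d$, $Q_c$, and the $Q_i$. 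Collecting the three layers establishes $f\in\mG(2,\bd,\bt,\balpha,Q')$ with exactly the stated parameters.

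\textbf{Expected main obstacle.} There is no deep difficulty; the work is bookkeeping. The two points needing care are (i) confirming that the minimal smoothness generated by $g_0$ is exactly $\min_i\gamma_i$ — the distribution functions $F_i$ are strictly smoother than the densities and so never lower the exponent, which is what forces the first effective smoothness index to be $\min_i\gamma_i$ and no larger; and (ii) bounding the H\"older norms of the antiderivatives $F_i$, of the coordinate projections, and of the product map uniformly in the (fixed) dimension, and then absorbing these together with the range bounds into a single radius $Q'$. The mild technical wrinkle of extending $c$ and the projections from $[0,1]$-type cubes to the enclosing cubes is dispatched by the paper's standing convention that H\"older balls consist of restrictions of functions defined on a neighbourhood.
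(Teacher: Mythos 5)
Your construction is essentially identical to the paper's: the same three-layer decomposition $(F_i,f_i)\to(c,\text{identities})\to\text{product}$, with only a cosmetic permutation of the coordinates of $g_0$ (cdfs first versus densities first), and the same invocation of the product lemma (Lemma \ref{L: Holder class of product of different terms}) for $g_2$. The parameter bookkeeping $(d_0,d_1,d_2)$, $(t_0,t_1,t_2)$, $(\alpha_0,\alpha_1,\alpha_2)$ and the range/radius absorption match the paper's argument.
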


Assume that the true density is of the form \eqref{Eq: general d dimensional Copula}, that all marginals $f_i$ have the same H\"{o}lder smoothness $\gamma_1=\ldots =\gamma_d,$ that $\beta=\gamma_c\wedge \gamma_1,$ and that all the conditions on the kernel and the network architecture underlying Theorem \ref{T: Convergence Rates} are satisfied. Applying the decomposition of the density in Lemma \ref{L: general d dimensional Copula}, Theorem \ref{T: Convergence Rates} yields the convergence rate
$n^{-2\gamma_1/(2\gamma_1+1)}\vee n^{-2\gamma_c/(2\gamma_c+d)}\vee n^{-2\beta/d},$ up to $\log(n)$-factors. When $\gamma_c/d\geq \gamma_1\geq (d-1)/2$, the convergence rate becomes $n^{-2\gamma_1/(2\gamma_1+1)}$ (up to $\log(n)$-factors). If the copula $c$ is smoother than the marginals, in the sense that $\gamma_c>\gamma_1=\beta$, then the obtained convergence rate is faster than the standard nonparametric rate $n^{-2\beta/(2\beta+d)}$ for estimation of $\beta$-H\"{o}lder smooth functions.

As example, consider the $d$-variate Farlie-Gumbel-Morgenstern copula family with parameter vector $\btheta$, which has copula density
\begin{equation*}
    c_{\btheta}(u_1,\ldots,u_d)=1+\sum_{r=2}^d\ \sum_{1\leq j_1<\cdots<j_r\leq d}\theta_{j_1\ldots j_r}\prod_{k=1}^r(1-2u_{j_k}),
\end{equation*}
for a parameter vector $\btheta$ satisfying
\begin{equation*}
    1+\sum_{r=2}^d \ \sum_{1\leq j_1<\cdots<j_r\leq d}\theta_{j_1\ldots j_r}\prod_{k=1}^r\xi_{j_k}\geq 0 \quad \text{for all} \ \xi_{j_k}\in\{-1,1\},
\end{equation*} \cite{johnson1977some,CorrelationDependenceCopula,CopulaIntroductionFGM}. The double summation sums over all $2^d-d-1$ subsets of $\{1,\ldots,d\}$ with at least two elements. Since the input of the copula comes from the distribution functions of the marginals, it holds that $(u_1,\ldots,u_d)\in[0,1]^d.$ This implies $v_j:=(1-2u_j)\in[-1,1],$ and by Lemma \ref{L: Holder class of product of different terms}, $\bv\mapsto\prod_{k=1}^rv_{j_k}\in \mH_{r}^{\gamma_c}([-1,1]^r,2^r)$, for all $\gamma_c\geq r+1$. Together with the chain rule, this yields $\bu\mapsto\prod_{k=1}^r(1-2u_{j_k})\in \mH_{r}^{\gamma_c}([-1,1]^r,4^r)$. The derivative of a sum is the sum of the derivatives and therefore the triangle inequality and $|\btheta|_{\infty}\leq 1$ imply for the copula density
$c_{\btheta}\in \mH_r^{\gamma_c}([-1,1]^d,(2^d-d)4^d),$ for all $\gamma_c\geq d+1.$ So for this family of copulas, the effective smoothness of the composition is determined by the smoothness of the marginals and the convergence rate becomes $n^{-2\gamma_1/(2\gamma_1+1)}\vee n^{-2\beta/d}.$

Explicit low-dimensional copula structures can be imposed using the fact that a $d$-dimensional copula density factorizes into a product of $d(d-1)/2$ bivariate (conditional) copula densities \cite{nagler2016evading, bedford2001probability,PairCopulaConstructionsMultipleDependence, VineCopulaBasedModeling}. The key ingredient in this argument is to successively rewrite the conditional densities using the formula $f_{X|Y}(x|y)=c_{X,Y}(F_X(x),F_Y(y))f_X(x),$ where $c_{X,Y}$ denotes the bivariate copula density of $(X,Y).$ The decomposition into bivariate copulas is not unique. Already for three variables $(X,Y,Z),$ there are two possible decompositions, namely
\begin{equation*}
    f_{X\rvert Y,Z}(x\rvert y,z)=c_{X,Y\rvert Z}\big(F_{X\rvert Z}(x\rvert z),F_{Y\rvert Z}(y\rvert z) \, \rvert \, z\big)f_{X|Z}(x\rvert z)
\end{equation*}
and a second decomposition that interchanges the roles of $y$ and $z.$
The so-called simplifying assumption \cite{stoeber2013simplified,nagler2016evading, VineCopulaBasedModeling} states that all the bivariate copulas in the decomposition are independent of the conditioned variables, in other words 
\begin{equation*}
    c_{i,j\rvert k}(F_{i\rvert k}(x_i\rvert x_k),F_{j\rvert k}(x_j\rvert x_k)\rvert x_k)=c_{i,j\rvert k}\big(F_{i\rvert k}(x_i\rvert x_k),F_{j\rvert k}(x_j\rvert x_k)\big).
\end{equation*}
For the remainder of this section, we will assume that the simplifying assumption holds.

A way to define such decompositions is by relying on regular vines, \cite{nagler2016evading, bedford2001probability,PairCopulaConstructionsMultipleDependence, VineCopulaBasedModeling}. A vine on $d$ variables $X_1,\ldots,X_d$ is a set of trees $(T_1,\ldots,T_r)$, such that the nodes of the first tree $T_1$ are $u_1,\ldots,u_d$. The nodes of the tree $T_i$, for $i=2,\ldots,r$, are (a subset of) the edges of the tree $T_{i-1}$. For a regular vine it furthermore holds that $r=d-1$, that two edges in a tree can only be joined by an edge in the next tree if these edges share a common node, and that the set of nodes of $T_i$ has to be equal to the set of edges of $T_{i-1}$.

Any regular vine on $(X_1,\ldots,X_d)$  defines a factorization of a $d$-dimensional copula, by associating a bivariate copula density to each edge in any of the trees. Copulas defined in this way are called vine-copulas.

Figure \ref{fig:VineExampleDim4} shows an example of a regular vine with four variables. Regular vines such as the one in Figure \ref{fig:VineExampleDim4}, where each tree has one node that has an edge to all other nodes in that tree, are known as canonical-vines \cite{PairCopulaConstructionsMultipleDependence} or C-vines \cite{VineCopulaBasedModeling}. 
The density corresponding to a canonical vine on $d$ variables (up to renumbering the variables) is given by
\begin{equation*}
    \prod_{k=1}^df_{k}(x_k)\prod_{j=1}^{d-1}\prod_{i=1}^{d-j}c_{j,j+i|1,\ldots,j-1}\big(F(x_j|x_1,\ldots,x_{j-1}),F(x_{j+i}|x_{1},\ldots,x_{j-1})\big).
\end{equation*}
Another type of regular vine is the D-vine, \cite{PairCopulaConstructionsMultipleDependence,VineCopulaBasedModeling}. In a D-vine no node in any tree is connected to more than two edges. Figure \ref{fig:CopulaStructureSimulation} shows the first tree of a D-vine on $d$ variables. The density corresponding to a D-vine on $d$ variables (up to renumbering the variables) is given by 
\begin{equation*}
    \prod_{k=1}^df_k(x_k)\prod_{j=1}^{d-1}\prod_{i=1}^{d-j}c_{i,i+j|i+1,\ldots,i+j-1}\big(F(x_i|x_{i+1},\ldots,x_{i+j-1}),F(x_{i+j}|x_{i+1},\ldots,x_{i+j-1})\big).
\end{equation*}

\begin{figure}[!ht]
\begin{center}
\begin{subfigure}[t]{0.32\textwidth}
\includegraphics[width=0.75\linewidth]{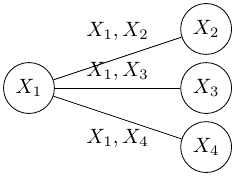}
\caption{First tree}
\label{fig:VineTree1}
\end{subfigure}
\begin{subfigure}[t]{0.32\textwidth}
\includegraphics[width=0.95\linewidth]{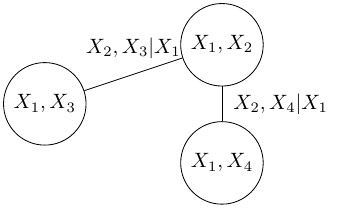}
\caption{Second tree}
\label{fig:VineTree2}
\end{subfigure} 
\begin{subfigure}[t]{0.32\textwidth}
\includegraphics[width=0.95\linewidth]{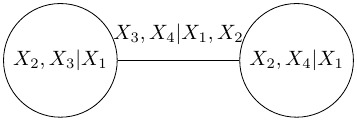}
\caption{Third tree}
\label{fig:VineTree3}
\end{subfigure}
\end{center}
\caption{Example of a regular vine on four variables. Another example is given in Figure \ref{fig:CopulaStructureSimulation}.}
\label{fig:VineExampleDim4}
\end{figure}

 If two random variables $X_1,X_2$ are conditionally independent given $X_3$, then $c_{1,2|3}=1$. If such conditional independence relations hold, one can simplify the vine-structure. For example consider the vine on four variables in Figure \ref{fig:VineExampleDim4}. In the (very simplified) case that $X_2$ and $X_3$ are independent given $X_1$, that $X_2$ and $X_4$ are independent given $X_1$, and that $X_3$ and $X_4$ are independent given $(X_1,X_2)$, only the bivariate copulas on the edges of the first tree (Figure \ref{fig:VineTree1}) appear in the decomposition, cf. Section 3 of \cite{PairCopulaConstructionsMultipleDependence}. More generally, suppose that there exists a canonical vine on $d$ variables such that the bivariate (conditional) copulas associated with all the trees except the first one are equal to one, then under the simplifying assumption, the decomposition becomes 
\begin{equation}\label{Eq: Pair Copula decomposition canonical vine}
    f(\bx)=\prod_{k=1}^df_k(x_k)\prod_{i=2}^dc_{1,i}\big(F(x_1),F(x_i)\big).
\end{equation}
Here we use that $X_1$ is the root of the first tree, which can always be achieved by renumbering the variables. In the case of a D-vine the decomposition (up to renumbering) becomes
\begin{equation}\label{Eq: Pair Copula decomposition d vine}
    f(\bx)=\prod_{k=1}^df_k(x_k)\prod_{i=1}^{d-1}c_{i,i+1}\big(F(x_i),F(x_{i+1})\big).
\end{equation}
Vine copulas where the bivariate copulas associated with all trees except the first one are equal to the independence copula can be interpreted as Markov tree models \cite{TruncatedRegularVines,kirshner2007learning}.

\begin{Lemma}\label{L: Pair Copula composition}
    Consider a density $f$ of the form \eqref{Eq: Pair Copula decomposition canonical vine} or \eqref{Eq: Pair Copula decomposition d vine}. If $f_i\in \mH_{1}^{\gamma}([0,1], Q),$ for all $i=1,\ldots,d,$ and all bivariate copulas are in $\mH_{2}^{\gamma_c}([0,1]^{2}, Q)$, then,  
    the function $f$ can be written as a composition $g_2\circ g_1\circ g_0$, with 
    $(d_0,d_1,d_2)=(d,2d,2d-1)$, $(t_0,t_1,t_2)=(1,2,2d-1)$, $(\alpha_0,\alpha_1,\alpha_2)=(\gamma,\gamma_c,\zeta)$, where $\zeta$ is arbitrarily large. 
\end{Lemma}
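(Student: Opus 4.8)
The strategy is to explicitly build the three layers $g_0,g_1,g_2$ realizing the factorization, and then verify that each coordinate function lands in the claimed Hölder class with the claimed number of effective variables. I would treat the canonical-vine case \eqref{Eq: Pair Copula decomposition canonical vine} first; the D-vine case \eqref{Eq: Pair Copula decomposition d vine} is entirely analogous after relabeling which pairs $(x_i,x_{i+1})$ or $(x_1,x_i)$ enter the bivariate copulas. First I would set $g_0:[0,1]^d\to[0,1]^{2d}$ to be the coordinatewise map that outputs, for each $k=1,\ldots,d$, both the marginal density value $f_k(x_k)$ and the marginal distribution function $F_k(x_k)=\int_0^{x_k} f_k$. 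Each component of $g_0$ depends on a single input variable, so $t_0=1$; since $f_k\in\mH_1^{\gamma}([0,1],Q)$ by hypothesis and $F_k$ is an antiderivative of $f_k$ (hence of smoothness $\gamma+1\geq\gamma$, with a Hölder radius controlled by $Q$ and the unit length of $[0,1]$), every component of $g_0$ lies in $\mH_1^{\gamma}([0,1],Q')$ for a suitable $Q'$, giving $\alpha_0=\gamma$.

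Next I would define $g_1:[0,1]^{2d}\to[0,1]^{2d-1}$: it passes through the $d$ marginal-density coordinates $f_k(x_k)$ unchanged (the identity is smooth of arbitrarily high order, depending on one variable), and it maps the pairs of distribution-function coordinates $(F(x_1),F(x_i))$, $i=2,\ldots,d$, to the $d-1$ values $c_{1,i}(F(x_1),F(x_i))$. Each such bivariate-copula component depends on exactly two of the $2d$ inputs, so $t_1=2$, and by hypothesis $c_{1,i}\in\mH_2^{\gamma_c}([0,1]^2,Q)$, so $\alpha_1=\gamma_c$. (One must take the effective smoothness of the pass-through identity coordinates to be $\infty$, but since $g_1$ is a vector of components with differing effective dependence, the relevant bound is that each component lies in the appropriate Hölder ball with at most $t_1=2$ active variables — consistent with the convention in \eqref{eq.comp_Hspace_def} that $t_i$ is the \emph{maximal} number of variables any $g_{ij}$ depends on.) Finally $g_2:[0,1]^{2d-1}\to[0,1]$ is the single function that multiplies all $2d-1$ of its arguments; by Lemma \ref{L: Holder class of product of different terms} (the product of finitely many bounded smooth functions is smooth of any prescribed order, with radius depending only on the number of factors and the sup-norm bound) this is $\zeta$-Hölder for arbitrarily large $\zeta$ on a hypercube, so $t_2=2d-1$, $\alpha_2=\zeta$. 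Composing, $g_2\circ g_1\circ g_0(\bx)=\prod_{k=1}^d f_k(x_k)\cdot\prod_{i=2}^d c_{1,i}(F(x_1),F(x_i))=f(\bx)$, which matches \eqref{Eq: Pair Copula decomposition canonical vine}, and the dimension/smoothness vectors are exactly as stated.

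The routine parts are the bookkeeping of dimensions ($d_1=2d$ since we carry both $f_k$ and $F_k$; $d_2=2d-1$ since after forming the $d-1$ copula values plus the $d$ marginal densities we have $2d-1$ factors to multiply) and checking that antiderivatives and finite products stay in Hölder balls with controlled radii, which is immediate from the definition \eqref{Eq: Holder ball} and from Lemma \ref{L: Holder class of product of different terms}. The one genuine subtlety — and the step I would be most careful about — is the boundedness and range constraints needed for the composition to be well-defined in the sense of \eqref{eq.comp_Hspace_def}: one needs each $g_i$ to map its hypercube domain into the next hypercube, which forces the copula densities $c_{1,i}$ and the marginal densities $f_k$ to be bounded (so that after rescaling one can take all $[a_i,b_i]$ finite with $|a_i|,|b_i|\leq Q$), and one must ensure the product map $g_2$ is evaluated only on a bounded box so that Lemma \ref{L: Holder class of product of different terms} applies with a finite radius. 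Since membership in a Hölder ball $\mH^{\gamma}$ with $\gamma>0$ already entails a sup-norm bound via \eqref{Eq: Holder ball}, these boundedness requirements are automatically met, and the intermediate boxes can be taken as $[0,Q]$-type cubes (or rescaled to $[0,1]$ as stated) at the cost of enlarging the Hölder radii by constants depending only on $d$ and $Q$; I would state this rescaling explicitly rather than leave it implicit.
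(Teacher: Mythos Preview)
Your proposal is correct and follows essentially the same construction as the paper's proof: $g_0$ outputs the $d$ marginal densities and $d$ marginal c.d.f.'s (so $t_0=1$, $\alpha_0=\gamma$, using that $F_k$ has smoothness $\gamma+1$), $g_1$ passes through the $d$ density values via the identity and evaluates the $d-1$ bivariate copula densities on the appropriate pairs of c.d.f.\ values (so $t_1=2$, $\alpha_1=\gamma_c$), and $g_2$ is the $(2d-1)$-fold product handled by Lemma~\ref{L: Holder class of product of different terms}. The paper's proof also makes the domain/codomain bookkeeping explicit (the identity components of $g_1$ are defined on $[0,Q]$ rather than $[0,1]$, and $g_2$ on $[-Q,Q]^{2d-1}$), which matches the care you flag in your final paragraph.
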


If we assume that $\gamma_c=\gamma=\beta$, then under the combined conditions of Theorem \ref{T: Convergence Rates} and Lemma \ref{L: Pair Copula composition}, the proposed two-stage neural network estimator achieves the convergence rate
$n^{-2\gamma/(2\gamma+2)}\vee n^{-2\gamma/d}$ up to $\log(n)$ factors. If $d>2$, this rate is faster than the nonparametric estimation rate $n^{-2\gamma/(2\gamma+d)}$.
Furthermore when $\gamma=\gamma_c\geq d/2-1,$ the rate equals $n^{-2\gamma/(2\gamma+2)}$, up to $\log(n)$-factors. If instead we assume that $\gamma_c\geq 2\gamma=2\beta$, that is, the copulas have at least twice the H\"{o}lder smoothness of the marginals, then the rate becomes $n^{-2\gamma/(2\gamma+1)}\vee n^{-2\gamma/d},$ up to $\log(n)$-factors.

\subsection{Mixture distributions}

If the true density is a mixture and all mixture components can be estimated by a fast convergence rate, it should be possible to also estimate the true density with a fast rate. Below we make this precise, assuming that the true density is a mixture density of the form 
\begin{align}\label{Eq: Mixture model}
    f_0=a_1f_1+\ldots+a_r f_r
\end{align}
with non-negative mixture weights $a_1,\ldots a_r$ summing up to one and densities $f_j$ in the compositional H\"older space $\mG(q_j,\bd_j,\bt_j,\balpha_j,Q')$ defined in \eqref{eq.comp_Hspace_def}. In particular, we allow the parameters $q_j,$ $\bd_j=(d_{0,j},\ldots,d_{q+1,j}),$ $\bt_j=(t_{0,j},\ldots,t_{q,j}),$ and $\balpha_j=(\alpha_{0,j},\ldots,\alpha_{q,j})$ to depend on $j.$ Compositional spaces are not closed under linear combinations and therefore there is no natural embedding of $f$ into the compositional spaces of the $f_j$'s.
As shown next, the convergence rate for estimation of $f$ still coincides with the maximum among all convergence rates for estimation of individual mixture components $f_j.$ Set $\alpha_{i,j}^{*}:=\alpha_{i,j}\prod_{\ell=i+1}^q(\alpha_{\ell,j}\wedge 1)$ and $\phi_n^{\star}:=\max_{j=1,\ldots,r} \phi_{n,j}$, where $$\phi_{n,j} := \max_{i=0,\ldots,q} n^{-\frac{2\alpha^*_{i,j}}{2\alpha^*_{i,j}+t_{i,j}}}$$ is the rate \eqref{eq.phin_def} for estimation of $f_j.$

\begin{Theorem}[Convergence rates for mixture distributions]\label{T: Mixture convergence rates}
Consider the density estimation model defined by \eqref{eq.KDE_def}-\eqref{eq.regr_model} with density $f_0=\sum_{i=1}^ra_if_i$, where $a_1,\ldots a_r$ are non-negative mixture weights summing up to one, and with $f_j\in \mathcal{C}_d^{\beta}([0,1]^d,Q)\cap\mG(q_j,\bd_j,\bt_j,\balpha_j,Q),$ for all $j=1,\ldots,r$. Let $\wh f$ be a two-stage neural network density estimator with kernel of order $\floor{\beta}$ as defined in Definition \ref{eq.defi_estimator}
for the neural network class $\mF(L,(p_0,\ldots, p_{L+1}),s,F)$ with parameters satisfying
\begin{compactitem}
    \item[(i)] $F\geq \max\{Q,1\},$
    \item[(ii)]  $\max_{j=1,\ldots,r}\sum_{i=1}^{q_j}\tfrac{\alpha_{i,j}+t_{i,j}}{2\alpha_{i,j}^*+t_{i,j}}\log_2(4t_{i,j}\vee 4\alpha_{i,j})\log_2(n)\leq L \lesssim n\phi_n^{\star},$
\item[(iii)] $  n\phi_n^{\star}\lesssim \min_{i=1,\hdots,L}p_i,$
    \item[(iv)] $s\asymp  n\phi_n^{\star}\log(n).$
\end{compactitem}
If $n$ is large enough, then there exists a constant $C_6,$ only depending on $r,(q_j,\bd_j,\bt_j,\balpha_j)_{j=1}^r,$ $F,$ $\beta,$ $K$ and the implicit constants in (ii), (iii), and (iv) such that
\begin{equation*}
    R(\widehat{f}_n,f_0)\leq C_6 L\max\Big(\phi_n^{\star}\log^{4}(n), n^{-\frac{2\beta}{d}}\Big)+6\Delta_n(\widehat{f}_n,f_0).
\end{equation*}
\end{Theorem}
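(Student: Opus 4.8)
The plan is to reduce the claim to the oracle inequality of Theorem \ref{T: Oracle Inequality} together with the ReLU approximation and metric entropy bounds already used in the proof of Theorem \ref{T: Convergence Rates}, exploiting that both Hölder balls and ReLU network classes are stable under convex combinations, even though the compositional space $\mG$ is not.

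First I would observe that $f_0=\sum_{j=1}^r a_jf_j$ still belongs to $\mathcal{C}^\beta_d([0,1]^d,Q)$: the ball $\mH^\beta_d(\mathbb{R}^d,Q)$ is convex and the constraints $f\geq 0,$ $\int_{[0,1]^d}f=1$ are preserved under convex combinations with weights summing to one. Hence Theorem \ref{T: Oracle Inequality} applies to $f_0$ with smoothness $\beta,$ and with $\delta=1/n$ it gives
\begin{equation*}
R(\widehat{f}_n,f_0)\leq C_1\frac{\log^2(n)\log(n\vee\mN_{\mF}(1/n))}{n}+\frac{C_2}{n}+C_3\Big(\frac{\log(n)}{n}\Big)^{\frac{2\beta}{d}}+6\Delta_n(\widehat{f}_n,f_0)+7\inf_{f\in\mF}\mathbb{E}_{\bX}\big[(f(\bX)-f_0(\bX))^2\big].
\end{equation*}
The entropy term is handled as in the proof of Theorem \ref{T: Convergence Rates}: the standard covering bound for $\mF(L,\bp,s,F)$ combined with (ii)--(iv) yields $\log\mN_{\mF}(1/n)\lesssim L\,s\log(n)\lesssim L\,n\phi_n^{\star}\log^2(n),$ so that term is of order $L\phi_n^{\star}\log^4(n);$ and for $n$ large enough $(\log(n)/n)^{2\beta/d}\lesssim\max(\phi_n^{\star}\log^4(n),n^{-2\beta/d})$ (if $2\beta\leq d$ the exponent is below $4,$ while if $2\beta>d$ the term is $o(\phi_n^{\star})$ because $\phi_n^{\star}\gg n^{-1}$). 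It remains to bound the approximation error.

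Next I would build a single network in $\mF(L,\bp,s,F)$ approximating $f_0.$ For each component $f_j\in\mG(q_j,\bd_j,\bt_j,\balpha_j,Q),$ the ReLU approximation theory for compositional functions from \cite{NonParametricRegressionReLU} used in the proof of Theorem \ref{T: Convergence Rates} provides a network $h_j$ with $\|h_j\|_\infty\leq F,$ with depth, width and sparsity as in (ii)--(iv) but with $\phi_n^{\star}$ replaced by $\phi_{n,j},$ and with $\|h_j-f_j\|_\infty^2\lesssim\phi_{n,j}\log^4(n).$ Since $L$ dominates each of the $r$ lower bounds appearing in the maximum in (ii), all $h_j$ can be enlarged to the common depth $L$ (which inflates the sparsity only by an additive term negligible compared to $s$); parallelizing them — shared input layer, block-diagonal hidden layers, and the weights $a_j\in[0,1]$ absorbed into the last weight matrices, which keeps all parameters bounded by one — produces a network $h=\sum_{j=1}^r a_jh_j$ of depth $L,$ of width at most $r\max_j p_{i,j}$ in each layer, of sparsity at most $r\,C\,n\phi_n^{\star}\log(n),$ and with $\|h\|_\infty\leq F$ because $\sum_j a_j=1.$ Enlarging the implicit constants in (iii)--(iv) by the factor $r,$ this $h$ lies in $\mF(L,\bp,s,F),$ and by convexity $\|h-f_0\|_\infty\leq\max_{j=1,\ldots,r}\|h_j-f_j\|_\infty,$ so
\begin{equation*}
\inf_{f\in\mF}\mathbb{E}_{\bX}\big[(f(\bX)-f_0(\bX))^2\big]\leq\|h-f_0\|_\infty^2\lesssim\max_{j=1,\ldots,r}\phi_{n,j}\log^4(n)=\phi_n^{\star}\log^4(n).
\end{equation*}

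Plugging this into the oracle inequality gives, for $n$ large enough, $R(\widehat{f}_n,f_0)\lesssim L\phi_n^{\star}\log^4(n)+n^{-2\beta/d}+6\Delta_n(\widehat{f}_n,f_0)\leq C_6\,L\max(\phi_n^{\star}\log^4(n),n^{-2\beta/d})+6\Delta_n(\widehat{f}_n,f_0),$ with $C_6$ depending only on $r,$ the tuples $(q_j,\bd_j,\bt_j,\balpha_j)_{j=1}^r,$ $F,$ $\beta,$ $K$ and the implicit constants in (ii)--(iv), as claimed. I expect the only genuine obstacle to be the bookkeeping in the third paragraph: verifying that one architecture satisfying (i)--(iv) — where (ii) maximizes over $j$ and (iii)--(iv) are phrased through $\phi_n^{\star}=\max_j\phi_{n,j}$ — can simultaneously host all $r$ approximating sub-networks after enlargement and parallelization, which is exactly what those conditions are calibrated for, the factors $r$ introduced by stacking being absorbed into the $n$-independent constant $C_6.$
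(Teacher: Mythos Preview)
Your proposal is correct and follows essentially the same route as the paper: verify $f_0\in\mathcal{C}^\beta_d([0,1]^d,Q)$ by convexity, apply Theorem~\ref{T: Oracle Inequality} with $\delta=1/n$, control the entropy via Lemma~\ref{L: Covering Entropy}, and bound the approximation error by building sub-networks $H_j$ for each $f_j$ via Theorem~\ref{T: Approx Result}, scaling by $a_j$, padding to common depth, and parallelizing. The paper packages the parallelization bookkeeping into a separate Lemma~\ref{L: Approximation of mixtures}, where the sub-architectures are taken as $L_j=\min\{L,\lfloor c_L n\phi_{n,j}\rfloor\}$, $p_{i,j}=\lfloor p_i/r\rfloor$, $s_j=\lfloor s\,\phi_{n,j}/(2r\phi_n^\star)\rfloor$; this is the precise version of your ``enlarging the implicit constants in (iii)--(iv) by the factor $r$'' and of your remark on padding the depth. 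One minor inaccuracy: Theorem~\ref{T: Approx Result} gives $\|h_j-f_j\|_\infty^2\lesssim\phi_{n,j}$ without the $\log^4(n)$ factor you wrote, but this only strengthens your bound.
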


\section{Simulations}\label{S: Simulations}

\subsection{Methods}
In a numerical simulation study we compare the proposed two-stage neural network density estimator (named SD for Split Data) as described in Definition \ref{eq.defi_estimator} to two other methods. The FD (full data) method follows the same construction as the two-stage neural network estimator but uses for both steps the full dataset without sample splitting. Thus, we have twice as many data for the individual steps, but also incur additional dependence between the regression variables as each of the constructed response variables $Y_i$ depends on the entire dataset (instead of only on the kernel dataset and the corresponding $X_i$ from the regression set). The neural network based methods are moreover compared to a multivariate kernel density estimator (KDE).

As suggested by the theory, for the first step in the SD and FD method, the bandwidths for the kernel density estimator are chosen of the form $c_1(\log(n)/n)^{1/d}$ and $c_2(\log(2n)/2n)^{1/d}.$ For the KDE method, the bandwidth is $c_3n^{-1/(2\beta+d)}.$ The constants $c_1,c_2,c_3$ are determined based on the average of the optimal bandwidths found by $50$-fold cross-validation, taking as search space the interval $[0.05,1.1]$ with stepsize $0.005$, on five independently generated datasets with sample size $n=200$ from the true density. Taking $n=200$ for the calibration is natural as it is the smallest sample size in the simulation environment.

\subsection{Densities}
\label{sec.densities}

For the different simulation settings, we generate data from five densities. These densities are called Naive Bayes mixing (NBm), Naive Bayes shifting (NBs), Binary Tree mixing (BTm), Binary Tree shifting (BTs) and Copula (C).

\subsubsection{NBm, NBs, BTm, BTs}

The densities (NBm) and (NBs) are so-called Naive Bayes networks \cite{koller2009probabilistic} with DAGs displayed in Figure \ref{fig:NB DAG} and density factorization
\begin{align}\label{Eq: NB factorization}
    f(x_1,\ldots,x_d)
    =f_d(x_d\rvert x_1)\cdot \ldots \cdot f_2(x_2 \rvert x_1) f_1(x_1).
\end{align}
The densities (BTm) and (BTs) are Bayesian networks with DAGs displayed in Figure \ref{fig:BT DAG} and density factorization
\begin{equation}\label{Eq: BT factorization}
    f(x_1,\ldots,x_d)=f_1(x_1)\prod_{j=2}^df_j(x_j\rvert x_{\ceil{(j-1)/2}}).
\end{equation}

\begin{figure}[!ht]
\begin{center}
\begin{subfigure}[t]{0.45\textwidth}
\includegraphics[width=0.95\linewidth]{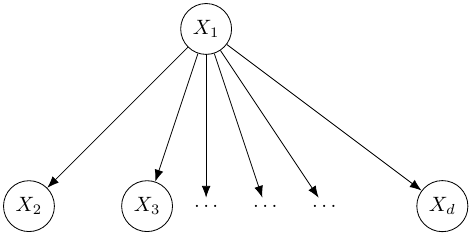}
\caption{Naive Bayes DAG}
\label{fig:NB DAG}
\end{subfigure}
\begin{subfigure}[t]{0.45\textwidth}
\includegraphics[width=0.95\linewidth]{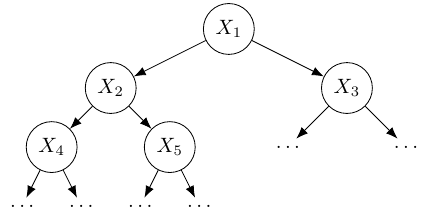}
\caption{Binary tree DAG}
\label{fig:BT DAG}
\end{subfigure} 
\end{center}
\caption{DAG for the Naive Bayes network (a) and the Bayesian network with binary tree structure (b).}
\label{fig:Simulation DAGs}
\end{figure}

For the density $f_{1},$ we use the exponential of a standard Brownian motion on $[0,1]$, normalized such that $f_1$ integrates to one. We use two different types of conditional densities. The mixing conditional density has mixture weights from the conditioned variable, 
\begin{align}\label{Eq: Mixing conditional density}
    f_j(x_j|x_i) = x_i h_j(x_j)+(1-x_i)h_j(1-x_j),
\end{align}
with $h_j$ a density supported on $[0,1]$.
The shifting conditional density incorporates a shift determined by the conditioned variable,
\begin{align}\label{Eq: Shifting conditional density}
    f_j(x_j|x_i) = h_j\big(\max\{x_j-x_i/4,0\}\big),
\end{align}
with $h_j$ a density supported on the interval $[0,3/4],$ so that the support of $f_j(\cdot|x_i)$ is ensured to lie in $[0,1]$.

For the densities (NBm) and (BTm) all conditional densities $f_j(\cdot|\cdot)$ in the factorization are mixing densities \eqref{Eq: Mixing conditional density}. For the densities (NBs) and (BTs) the conditional densities $f_j(\cdot|\cdot)$ in the factorization are shifting densities \eqref{Eq: Shifting conditional density} if $j$ is divisible by $3$ and mixing densities \eqref{Eq: Mixing conditional density} otherwise.

It remains to choose the density $h_j$ in \eqref{Eq: Mixing conditional density} and \eqref{Eq: Shifting conditional density}. We consider scenarios containing both smooth and rough densities. For (NBm), (NBs), (BTm) and (BTs) and all $j$ such that $j-1$ is not divisible by $3$, we set
\begin{align}
    h_j(x)=\Big (1-\frac {2x-1}d\Big)\mathbf{1}\big(0\leq x\leq 1\big).
    \label{eq.36427}
\end{align}
Viewed as functions on $[0,1]$, these densities have arbitrarily large H\"{o}lder smoothness. The densities take values between $1-1/d$ and $1+1/d$ ensuring that in higher dimensions the joint densities, which are products, neither become extremely small or large.

For (NBm) and (BTm) and all $j>1$ such that $j-1$ is divisible by $3$, we take as densities $h_j$ the exponential of the Brownian motion on $[0,1],$ normalized such that $h_j$ integrates to one. Brownian motion has H\"{o}lder smoothness $1/2-\eta$ for any $\eta\in(0,1/2)$, but is almost surely not $1/2$-H\"{o}lder smooth \cite{BrownianMotion}. This means that these densities have low regularity.

 For (NBs) and (BTs) and all $j>1$ such that $j-1$ is divisible by $3$, we take as densities $h_j$ the paths of the exponential of the Brownian motion on $[0,1]$ multiplied with the function $x\mapsto \rho(x)=\max(0,(4x/3)(1-4x/3))$ and normalized such that $h_j$ integrates to one. Multiplication with $\rho$ ensures that the support of these densities is in $[0,3/4],$ as required in the definition \eqref{Eq: Shifting conditional density}.

 The conditional densities $f_j$ defined in \eqref{Eq: Mixing conditional density} and \eqref{Eq: Shifting conditional density} can be interpreted as compositional functions.
\begin{Lemma}\label{L: Structure mixing conditional}
    Consider the mixing conditional density $f_j$ in \eqref{Eq: Mixing conditional density}. If $h_j\in \mH_1^{\gamma_j}([0,1],Q)$, then $f_j$ can be written as the composition $g_1\circ g_0,$ with $(d_0,d_1)=(2,3)$, $(t_0,t_1)= (1,3)$, and $(\alpha_0,\alpha_1)=(\gamma_j,\zeta)$, with $\zeta$ arbitrarily large.
\end{Lemma}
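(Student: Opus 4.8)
The plan is to write down an explicit two-layer composition realizing $f_j$ and then check that each building block lies in the Hölder ball required by \eqref{eq.comp_Hspace_def}. Regard $f_j$ from \eqref{Eq: Mixing conditional density} as the map $(x_i,x_j)\mapsto f_j(x_j\mid x_i)$ on $[0,1]^2$. I would take the inner map $g_0:[0,1]^2\to\mathbb{R}^3$ with components $g_{0,1}(x_i,x_j):=x_i$, $g_{0,2}(x_i,x_j):=h_j(x_j)$, and $g_{0,3}(x_i,x_j):=h_j(1-x_j)$. Each component depends on exactly one of the two input coordinates, so $t_0=1$; the first component is a coordinate projection, hence $C^\infty$ and in particular in $\mH_1^{\gamma_j}([0,1],Q')$ for $Q'$ large enough, while $g_{0,2}$ is $h_j$ itself and $g_{0,3}$ is $h_j$ precomposed with the isometry $x_j\mapsto 1-x_j$, so both inherit membership in $\mH_1^{\gamma_j}([0,1],Q)$ from the hypothesis $h_j\in\mH_1^{\gamma_j}([0,1],Q)$. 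Since $h_j$ is a density with $\|h_j\|_\infty\le Q$, the range of $g_0$ lies in $[0,1]\times[0,Q]^2\subseteq[0,Q\vee 1]^3$, which identifies $[a_1,b_1]=[0,Q\vee 1]$.

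Next I would set the outer map $g_1:[a_1,b_1]^3\to\mathbb{R}$ to be $g_1(u_1,u_2,u_3):=u_1u_2+(1-u_1)u_3$. Substituting $g_0$ gives $g_1(g_0(x_i,x_j))=x_i h_j(x_j)+(1-x_i)h_j(1-x_j)=f_j(x_j\mid x_i)$, so $f_j=g_1\circ g_0$ with $d_0=2$, $d_1=3$ as claimed, and $g_1$ depends on all three arguments, i.e.\ $t_1=3$. Because $g_1$ is a polynomial of degree two, all its partial derivatives of order at least three vanish and the finitely many lower-order derivatives are bounded on the bounded cube $[a_1,b_1]^3$ by a constant depending only on $Q$; hence $g_1\in\mH_3^{\zeta}([a_1,b_1]^3,Q'')$ for every finite $\zeta$, with a radius $Q''$ that does \emph{not} grow with $\zeta$. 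Taking $Q'$ to be the maximum of $Q$, $Q''$ and $1$, using that Hölder balls are nested in their radius, makes all components of $g_0$ and $g_1$ lie in the corresponding balls of radius $Q'$ and satisfies $|a_i|,|b_i|\le Q'$, so $f_j\in\mG(1,(2,3),(1,3),(\gamma_j,\zeta),Q')$ for $\zeta$ arbitrarily large, which is the assertion.

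The only point requiring care — and the closest thing to an obstacle — is the twice-used fact that a $C^\infty$ building block (the projection $g_{0,1}$ and the quadratic $g_1$) lies in $\mH^{\alpha}$ for every finite $\alpha$ with a radius controlled by finitely many sup-norms of its derivatives on the bounded domain; this is precisely the step that guarantees enlarging $\zeta$ never forces $Q'$ to blow up, and hence legitimizes "$\zeta$ arbitrarily large." Everything else is the explicit construction above together with routine bookkeeping of the constant $Q'$.
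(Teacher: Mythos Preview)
Your proposal is correct and follows essentially the same approach as the paper: the same choice of $g_0(x_i,x_j)=(x_i,h_j(x_j),h_j(1-x_j))$ and $g_1(u_1,u_2,u_3)=u_1u_2+(1-u_1)u_3$, and the same reasoning that $g_1$, being a degree-two polynomial, lies in every $\mH_3^{\zeta}$ with a fixed radius once $\zeta>3$. The only cosmetic difference is that the paper explicitly lists the nonvanishing partial derivatives of $g_1$ and sums their sup-norms to obtain the concrete radius $4(Q+1)$ on $[0,1]\times[-Q,Q]^2$, whereas you argue this point abstractly.
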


\begin{Lemma}\label{L: Structure shifting conditional}
Consider the shifting conditional density $f_j$ in \eqref{Eq: Shifting conditional density}
If $h_j\in \mH^{\gamma_j}_1([0,3/4],Q)$, then $f_j$ can be written as $g_1\circ g_0,$ with $(d_0,d_1)=(2,1)$, $(t_0,t_1)=(2,1)$, $(\alpha_0,\alpha_1)=(1,\gamma_j).$
\end{Lemma}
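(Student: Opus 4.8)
The plan is to write $f_j=g_1\circ g_0$ with the two building blocks chosen explicitly: let $g_0:[0,1]^2\to\mathbb{R}$ be $g_0(x_i,x_j):=\max\{x_j-x_i/4,0\}$, and let $g_1:=h_j$, where $h_j$ is regarded as a function on $[0,1]$ by extending it with the value $0$ on $(3/4,1]$; this extension is consistent with $h_j$ being supported on $[0,3/4]$. By the definition \eqref{Eq: Shifting conditional density} of the shifting conditional density, $f_j(x_j\mid x_i)=h_j(\max\{x_j-x_i/4,0\})=g_1(g_0(x_i,x_j))$, so indeed $f_j=g_1\circ g_0$.

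First I would settle the bookkeeping of dimensions and effective numbers of variables. Since $x_j\in[0,1]$ and $x_i/4\in[0,1/4]$, one has $x_j-x_i/4\in[-1/4,1]$, so $g_0$ maps $[0,1]^2$ into $[0,1]$ and $g_1$ is only ever evaluated on $[0,1]$, where it is defined. Hence the chain has dimension vector $\bd=(d_0,d_1,d_2)=(2,1,1)$, matching $(d_0,d_1)=(2,1)$; the single component of $g_0$ depends on both inputs, so $t_0=2$, while $g_1$ depends on its one input, so $t_1=1$.

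Next come the smoothness claims. The map $a\mapsto\max\{a,0\}$ is $1$-Lipschitz and $(x_i,x_j)\mapsto x_j-x_i/4$ is affine, so $|g_0(\bx)-g_0(\by)|\le|x_j-y_j|+\tfrac14|x_i-y_i|\le\tfrac54|\bx-\by|_\infty$ and $\|g_0\|_\infty\le1$; with the paper's convention $\floor{1}=0$, the $\mH_2^1$-norm of $g_0$ is $\|g_0\|_\infty$ plus its Lipschitz seminorm, hence at most $9/4$, so $g_0\in\mH_2^1([0,1]^2,Q')$ for any $Q'\ge9/4$ and $\alpha_0=1$. For $g_1$, the hypothesis $h_j\in\mH_1^{\gamma_j}([0,3/4],Q)$ together with $\supp h_j\subseteq[0,3/4]$ forces $h_j$ and its derivatives up to order $\floor{\gamma_j}$ to vanish at $3/4$ (immediately to the right of $3/4$ the function is identically zero, and $h_j$ is $\floor{\gamma_j}$-times continuously differentiable there), so the zero-extension $g_1$ still lies in $\mH_1^{\gamma_j}([0,1],Q)$: for $x\le3/4<y$ one estimates $|g_1(x)-g_1(y)|=|h_j(x)-h_j(3/4)|\le Q|x-3/4|^{\gamma_j}\le Q|x-y|^{\gamma_j}$, and the analogous bound holds for the top-order derivative. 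Thus $\alpha_1=\gamma_j$, and combining everything gives $f_j=g_1\circ g_0\in\mG(1,(2,1,1),(2,1),(1,\gamma_j),Q')$ with $Q'=\max\{9/4,Q\}$, which is the assertion. The only delicate point is this last verification that extending $h_j$ by zero preserves $\gamma_j$-H\"older smoothness on $[0,1]$: it is trivial when $\gamma_j\le1$ (only continuity of $h_j$ at $3/4$ is needed) and for $\gamma_j>1$ it relies on the vanishing of the lower-order derivatives of $h_j$ at $3/4$ implied by the support condition; everything else is a routine check.
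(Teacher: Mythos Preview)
Your proof is correct and follows the same approach as the paper: the same decomposition $g_0(x_i,x_j)=\max\{x_j-x_i/4,0\}$ and $g_1=h_j$, the same Lipschitz bound $5/4$ yielding $g_0\in\mH_2^1([0,1]^2,9/4)$, and the identification $\alpha_1=\gamma_j$. You are actually more explicit than the paper about the need to view $h_j$ on the full range $[0,1]$ of $g_0$ via the zero extension (the paper's proof simply writes $g_1(y)=h_j(y)$ without discussing domains), so your treatment is, if anything, a bit more careful.
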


The (NBm), (NBs), (BTm) and (BTs) joint densities are thus compositions where the components with low regularity are all univariate functions, making the rate $\phi_n$ dimensionless. The factorization in \eqref{Eq: NB factorization} and the composition of Lemma \ref{L: General potential product2} combined with the composition in Lemma \ref{L: Structure mixing conditional} shows this for the (NBm) model. The factorization in \eqref{Eq: NB factorization} and the composition Lemma \ref{L: General potential product2} combined with the compositions in Lemma \ref{L: Structure mixing conditional} and Lemma \ref{L: Structure shifting conditional} show this for the (NBs) model. The factorization in \eqref{Eq: BT factorization} and the composition of Lemma \ref{L: General potential product2} combined with Lemma \ref{L: Structure mixing conditional} shows this for the (BTm) model and the factorization in \eqref{Eq: BT factorization} and the composition of Lemma \ref{L: General potential product2} combined with the compositions in Lemma \ref{L: Structure mixing conditional} and Lemma \ref{L: Structure shifting conditional} show this for the (BTs) model.

\subsubsection{Simulation setup for copula density model}

For the copula model, the density (C) is associated to a D-vine copula. The first tree of the D-vine is depicted in Figure \ref{fig:CopulaStructureSimulation}. We assume that this first tree captures all the dependencies between the variables. This means that $X_i$ is conditionally independent of $X_{i+j}$ given $X_{i+1},\ldots,X_{i+j-1}$ for all pairs $(j,i)$ with $j\in \{2,\ldots,d-1\}$ and $i\in\{1,\ldots,d-j\}.$

\begin{figure}[!ht]
\begin{center}
   \includegraphics{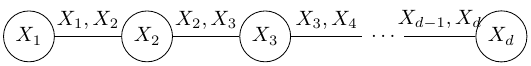}
    \caption{Structure of the first tree of the D-vine copula used in the simulation.}
    \label{fig:CopulaStructureSimulation}
\end{center}
\end{figure}

The bivariate copulas for the density (C) are chosen from the bivariate Farlie-Gumbel-Morgenstern copula family defined via the copula densities $c_{\theta,i,j}(F_i(x_i),F_j(x_j))=1+\theta(1-2F_i(x_i))(1-2F_j(x_j))$, with parameter $|\theta|\leq 1$. As shown in Section \ref{Sec: Copulas}, these copulas have arbitrarily large H\"{o}lder smoothness. If $(i-1)/(d-2)\neq 1/2,$ we use the parameter $\theta=-1+2(i-1)/(d-2)$ for the bivariate copula. Otherwise we use $\theta=1/100.$ The marginal densities are displayed in Figure \ref{fig: MarginalCopula}. \begin{figure}[ht]
\begin{minipage}{0.6\textwidth}
    \begin{equation*}
  f_k(x)=\begin{cases}
    1+\frac{1}{2d}-\frac{1}{d}\sqrt{\frac{1}{4}-x}, & \text{ if } 0\leq x<\frac{1}{4}\\
    1+\frac{1}{2d}-\frac{1}{d}\sqrt{x-\frac{1}{4}}, & \text{ if } \frac{1}{4}\leq x<\frac{1}{2}\\
    1-\frac{1}{2d}+\frac{1}{d}\sqrt{\frac{3}{4}-x}, & \text{ if } \frac{1}{2}\leq x<\frac{3}{4}\\
    1-\frac{1}{2d}+\frac{1}{d}\sqrt{x-\frac{3}{4}}, & \text{ if } \frac{3}{4}\leq x\leq 1.\\
  \end{cases}
    \end{equation*}
\end{minipage}
\begin{minipage}{0.4\textwidth}
\centering
    \includegraphics[width=0.95\linewidth]{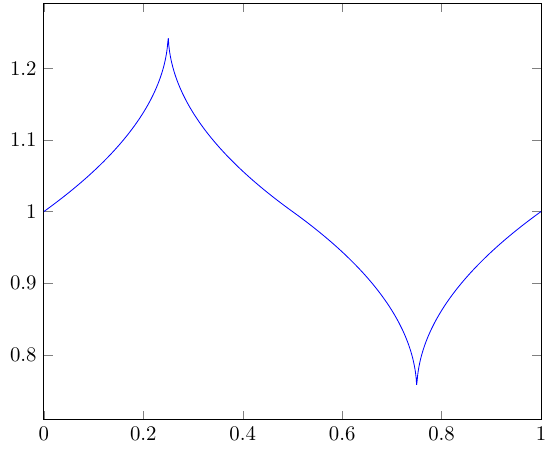}
\end{minipage}
    \caption{Marginal density $f_k(x)$ used in the simulated copula model.}
    \label{fig: MarginalCopula}
\end{figure}
The smoothness of this density is determined by the square root, which has H\"older smoothness $1/2$. The right panel of Figure \ref{fig: MarginalCopula} displays the graph for $d=2.$ This marginal density is appealing as it has a closed-form expression for the density and the c.d.f. The dependency on $d$ of the marginals is to ensure that the marginal densities remain between $1-1/d$ and $1+1/d$ in order to prevent numerical instability.
Since the Farlie-Gumbel-Morgenstern copula is infinitely smooth, we get from Lemma \ref{L: Pair Copula composition} that the effective smoothness of the joint density generated from this vine-copula approach is equal to $1/2$ and thus the rate $\phi_n$ in Theorem \ref{T: Convergence Rates} becomes $n^{-1/2},$ up to $\log(n)$-factors.

\subsection{Neural network training setup}
For both the SD and FD method, we train neural networks with width vector $\bp=(d,\ceil{(2n)^{1/2}},\ceil{(2n)^{1/2}},\ldots,\ceil{(2n)^{1/2}},1)$ and depth $L=\ceil{\log_2(2n)}$. Since the derived convergence rate of the two-stage neural network estimator is $\phi_n=n^{\eta'-1/2}$, for any $\eta'\in(0,1/2)$, in the (NBm), (NBs), (BTm) and (BTs) settings, and $\phi_n=n^{-1/2}$ in the (C) setting, this choice of the network width satisfies the bound in Theorem \ref{T: Convergence Rates}. The chosen depth is of the order $\log(n)$ suggested by the theory, but there might be a mismatch regarding the constants in the lower bound of Condition (ii) in Theorem \ref{T: Convergence Rates}. Since the proof of this result does not optimize the constants, we find it more appealing to work with the generic choice $L=\ceil{\log_2(2n)}$ in the simulations.
Furthermore, Theorem \ref{T: Convergence Rates} imposes a sparsity condition on the networks as well as a condition on the maximum norm of the parameters. In the simulation study we use $\ell_2$-penalization on the weight matrices and the Glorot uniform initialization \cite{glorot2010understanding} to ensure that the parameter values do not become too large. Although these methods do not provide a hard guarantee that the condition on the maximum norm is satisfied, they work reasonably well in practice and the number of learned network parameters exceeding in absolute value one is small compared to the total number of network parameters. We use pruning (using the TensorFlow model optimization package) to enforce sparsity. The fraction of zero network parameters is chosen as $1-2m\log(m)\phi_m/p$, with $p$ the total number of network parameters and $m=2n$ for the FD method and $m=n$ for the SD method.

The source code is available on GitHub \cite{Bos_Simulation-code_A_supervised_2023}.

\subsection{Simulation Results}

For each of the five densities described in Section \ref{sec.densities}, we generate four training samples, with respective sample sizes $200, 1000, 5000, 25000$. For both the SD and FD method, $50$  neural networks are trained with different random initialization on each training sample. Repeating the network fit on the same sample highlights the variation of test performance with respect to the initialization and the achieved training loss. We compare the performance of all the methods on $10^6$ test samples. This sample is only used for computing the test error and none of the methods has access to the test samples during training.
 Figures \ref{fig:NaiveBayes}-\ref{fig:Copula} report the test errors for the five different settings.

 \begin{figure}[!ht]
\begin{center}
\begin{subfigure}[t]{0.49\textwidth}
\includegraphics[width=0.95\linewidth]{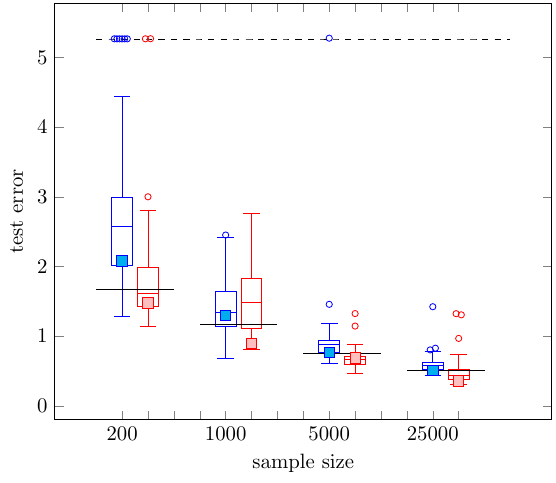}
\caption{NBs, dimension 4}
\label{fig:NBs4}
\end{subfigure}
\begin{subfigure}[t]{0.49\textwidth}
\includegraphics[width=0.95\linewidth]{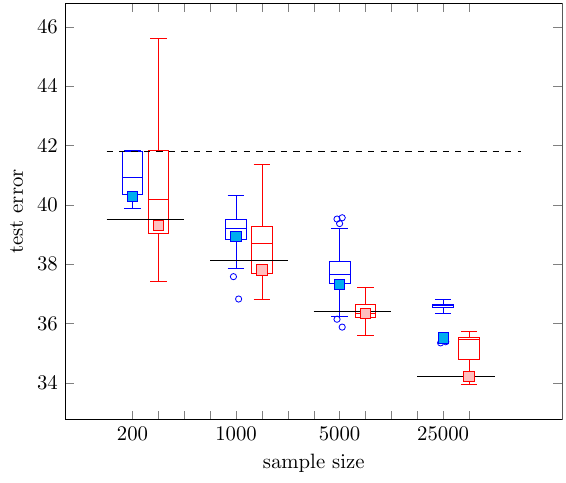}
\caption{NBs, dimension 12}
\label{fig:NBs12}
\end{subfigure} 
\begin{subfigure}[t]{0.49\textwidth}
\includegraphics[width=0.95\linewidth]{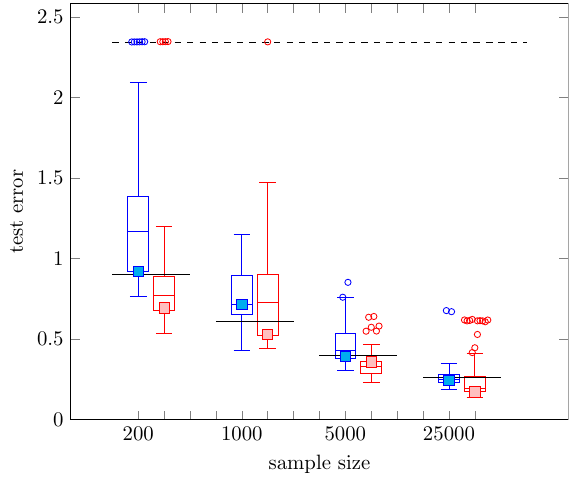}
\caption{NBm, dimension 4}
\label{fig:NBm4}
\end{subfigure}
\begin{subfigure}[t]{0.49\textwidth}
\includegraphics[width=0.95\linewidth]{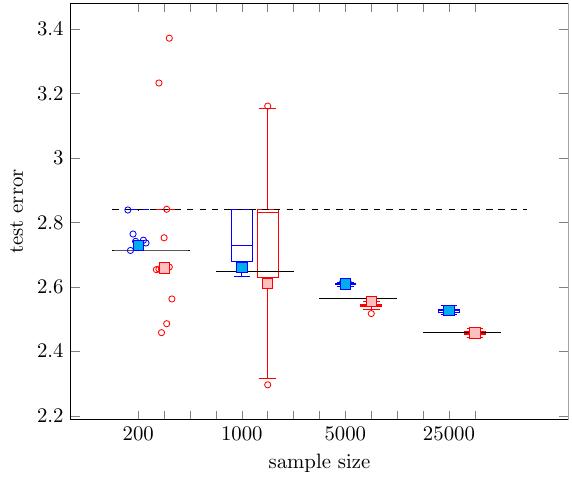}
\caption{NBm, dimension 12}
\label{fig:NBm12}
\end{subfigure} 
\end{center}
\caption{Test errors for the naive Bayes model. SD in blue, FD in red, KDE black bars. The test error of the network with the lowest training error is indicated by the filled square. The black dashed line is the test error of the zero function. Notice that in the individual plots, the $y$-axis has different starting points.
}
\label{fig:NaiveBayes}
\end{figure}

\begin{figure}[!ht]
\begin{center}
\begin{subfigure}[t]{0.49\textwidth}
\includegraphics[width=0.95\linewidth]{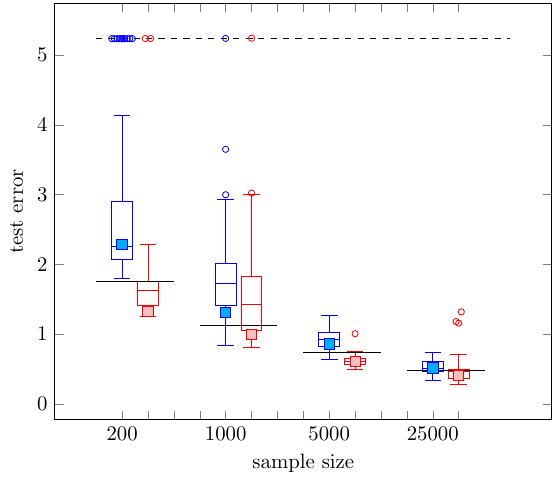}
\caption{BTs, dimension 4}
\label{fig:BTs4}
\end{subfigure}
\begin{subfigure}[t]{0.49\textwidth}
\includegraphics[width=0.95\linewidth]{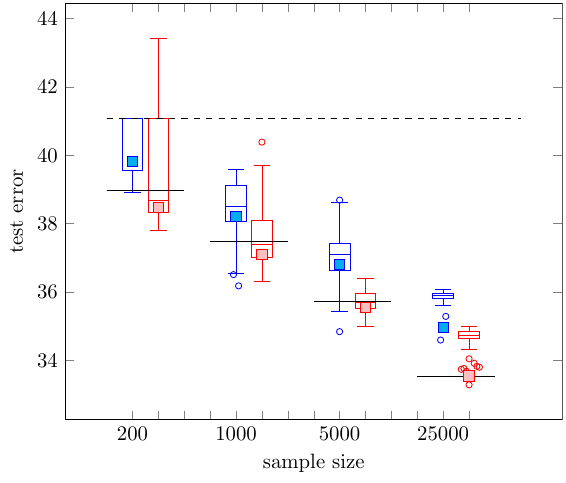}
\caption{BTs, dimension 12}
\label{fig:BTs12}
\end{subfigure} 
\begin{subfigure}[t]{0.49\textwidth}
\includegraphics[width=0.95\linewidth]{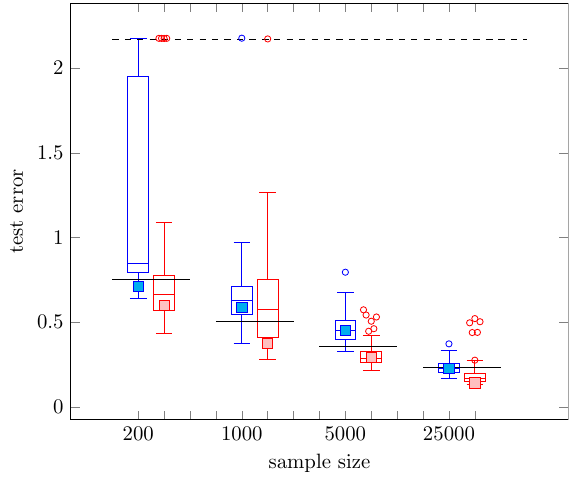}
\caption{BTm, dimension 4}
\label{fig:BTm4}
\end{subfigure}
\begin{subfigure}[t]{0.49\textwidth}
\includegraphics[width=0.95\linewidth]{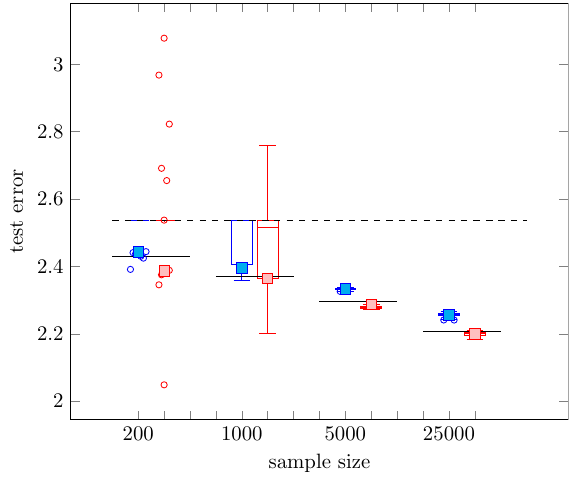}
\caption{BTm, dimension 12}
\label{fig:BTm12}
\end{subfigure} 
\end{center}
\caption{Test errors for the Bayesian network model. SD in blue, FD in red, KDE black bars. The test error of the network with the lowest training error is indicated by the filled square. The black dashed line is the test error of the zero function. Notice that in the individual plots, the $y$-axis has different starting points.}
\label{fig:BayesianNetwork}
\end{figure}

\begin{figure}[!ht]
\begin{center}
\begin{subfigure}[t]{0.49\textwidth}
\includegraphics[width=0.95\linewidth]{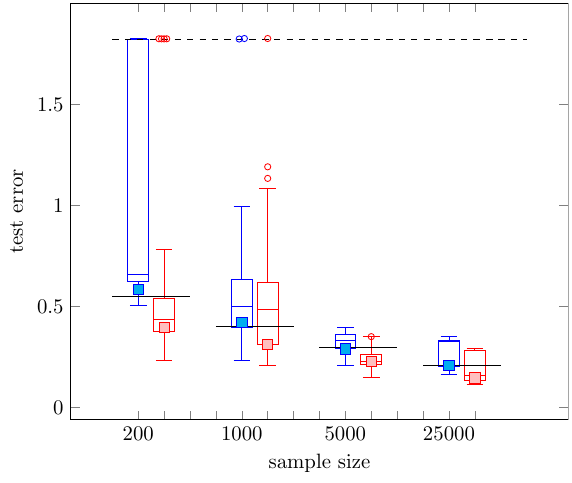}
\caption{Copula, dimension 4}
\label{fig:Copula4}
\end{subfigure}
\begin{subfigure}[t]{0.49\textwidth}
\includegraphics[width=0.95\linewidth]{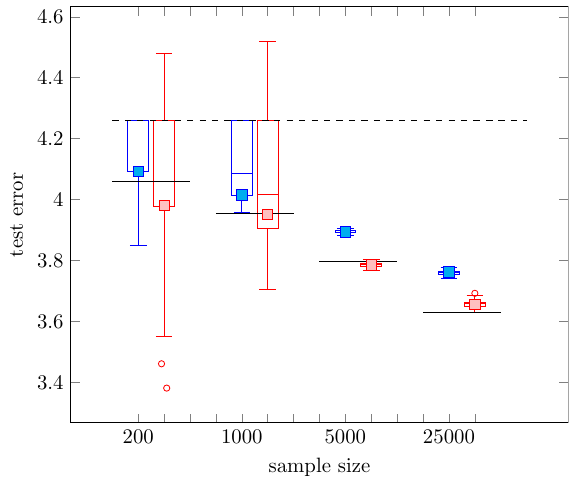}
\caption{Copula, dimension 12}
\label{fig:Copula12}
\end{subfigure} 
\end{center}
\caption{Test errors for the Copula model. SD in blue, FD in red, KDE black bars. The test error of the network with the lowest training error is indicated by the filled square. The black dashed line is the test error of the zero function. Notice that in the individual plots, the $y$-axis has different starting points.}
\label{fig:Copula}
\end{figure}

For the smaller sample sizes, the neural network fit is sometimes the zero function. These reconstructions generate the circles on top of the dashed lines in the plots. The theory claims that among the sparsely connected neural networks that satisfy all the imposed conditions, the one with small training error should perform particularly well. To see whether there is an effect, we mark for every simulation setting the test error of the network with the smallest training error by a filled square. The simulations show that for the FD method, this network fit is often near the first quartile in the box plots and thus indeed performs particularly well among the different random initializations.

To further investigate the relation between training error and test error, we plot for the (NBs) model in dimension four (Figure \ref{fig:ScatterNBs4}) and twelve  (Figure \ref{fig:ScatterNBs12}) the training error versus the test error of all networks, for both the SD and FD method and for each of the four considered sample sizes. The linear line displaying the least squares regression fit has positive slope, except for the SD method with sample size 1000 (in both dimensions four and twelve).

\begin{figure}[!ht]
\begin{center}
\begin{subfigure}[t]{0.24\textwidth}
\includegraphics[width=0.95\linewidth]{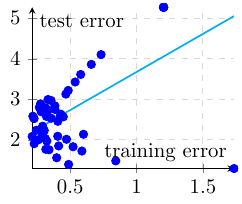}
\caption{SD method, sample size $200$}
\label{fig:ScatterNBs4SD200}
\end{subfigure}
\begin{subfigure}[t]{0.24\textwidth}
\includegraphics[width=0.95\linewidth]{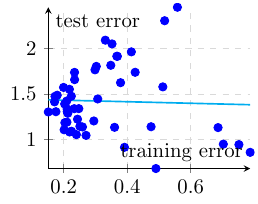}
\caption{SD method, sample size $1000$}
\label{fig:ScatterNBs4SD1000}
\end{subfigure}
\begin{subfigure}[t]{0.24\textwidth}
\includegraphics[width=0.95\linewidth]{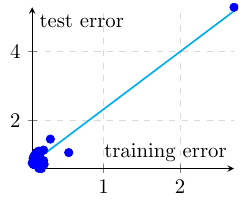}
\caption{SD method, sample size $5000$}
\label{fig:ScatterNBs4SD5000}
\end{subfigure}
\begin{subfigure}[t]{0.24\textwidth}
\includegraphics[width=0.95\linewidth]{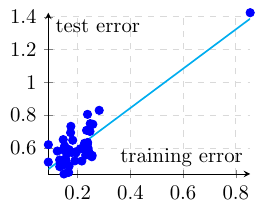}
\caption{SD method, sample size $25000$}
\label{fig:ScatterNBs4SD25000}
\end{subfigure}
\begin{subfigure}[t]{0.24\textwidth}
\includegraphics[width=0.95\linewidth]{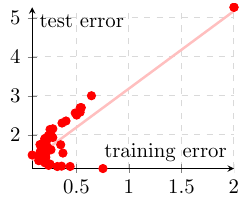}
\caption{FD method, sample size $200$}
\label{fig:ScatterNBs4FD200}
\end{subfigure} 
\begin{subfigure}[t]{0.24\textwidth}
\includegraphics[width=0.95\linewidth]{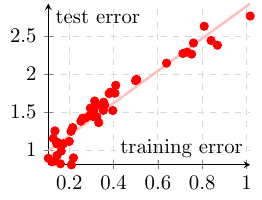}
\caption{FD method, sample size $1000$}
\label{fig:ScatterNBs4FD1000}
\end{subfigure}
\begin{subfigure}[t]{0.24\textwidth}
\includegraphics[width=0.95\linewidth]{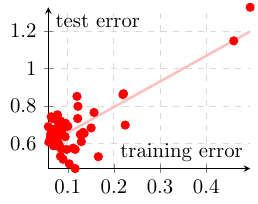}
\caption{FD method, sample size $5000$}
\label{fig:ScatterNBs4FD5000}
\end{subfigure}
\begin{subfigure}[t]{0.24\textwidth}
\includegraphics[width=0.95\linewidth]{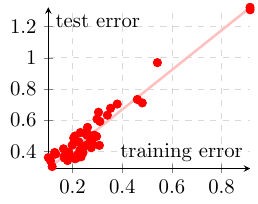}
\caption{FD method, sample size $25000$}
\label{fig:ScatterNBs4FD25000}
\end{subfigure} 
\end{center}
\caption{Scatterplot of the test error versus the training error for the (NBs) model in 4 dimensions. The line shows the linear least squares regression fit.
}
\label{fig:ScatterNBs4}
\end{figure}

\begin{figure}[!ht]
\begin{center}
\begin{subfigure}[t]{0.24\textwidth}
\includegraphics[width=0.95\linewidth]{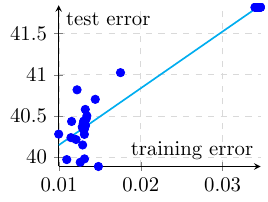}
\caption{SD method, sample size $200$}
\label{fig:ScatterNBs12SD200}
\end{subfigure}
\begin{subfigure}[t]{0.24\textwidth}
\includegraphics[width=0.95\linewidth]{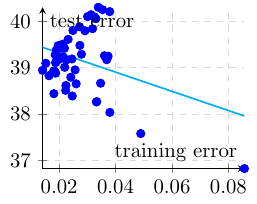}
\caption{SD method, sample size $1000$}
\label{fig:ScatterNBs12SD1000}
\end{subfigure}
\begin{subfigure}[t]{0.24\textwidth}
\includegraphics[width=0.95\linewidth]{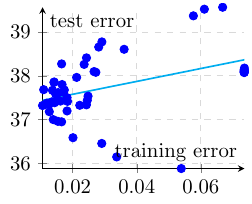}
\caption{SD method, sample size $5000$}
\label{fig:ScatterNBs12SD5000}
\end{subfigure}
\begin{subfigure}[t]{0.24\textwidth}
\includegraphics[width=0.95\linewidth]{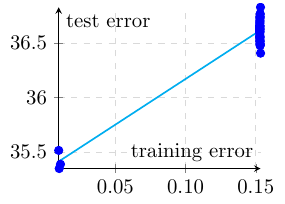}
\caption{SD method, sample size $25000$}
\label{fig:ScatterNBs12SD25000}
\end{subfigure}
\begin{subfigure}[t]{0.24\textwidth}
\includegraphics[width=0.95\linewidth]{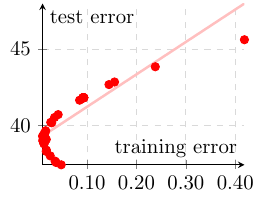}
\caption{FD method, sample size $200$}
\label{fig:ScatterNBs12FD200}
\end{subfigure} 
\begin{subfigure}[t]{0.24\textwidth}
\includegraphics[width=0.95\linewidth]{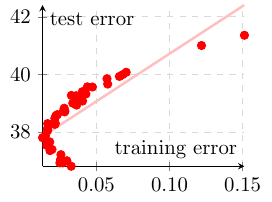}
\caption{FD method, sample size $1000$}
\label{fig:ScatterNBs12FD1000}
\end{subfigure}
\begin{subfigure}[t]{0.24\textwidth}
\includegraphics[width=0.95\linewidth]{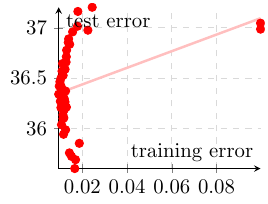}
\caption{FD method, sample size $5000$}
\label{fig:ScatterNBs12FD5000}
\end{subfigure}
\begin{subfigure}[t]{0.24\textwidth}
\includegraphics[width=0.95\linewidth]{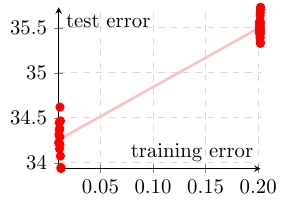}
\caption{FD method, sample size $25000$}
\label{fig:ScatterNBs12FD25000}
\end{subfigure} 
\end{center}
\caption{Scatterplot of the test error versus the training error for the (NBs) model in 12 dimensions. The line shows the linear least squares regression fit.
}
\label{fig:ScatterNBs12}
\end{figure}

Let us now compare the network fits with the smallest training error (indicated by a blue or red square in Figures \ref{fig:NaiveBayes}-\ref{fig:Copula}) to the kernel density estimator. To estimate the joint density depending on four variables, the neural network fits based on the FD method with the lowest training error seem to perform best for all sample sizes. For density estimation on $[0,1]^{12},$ the picture is less clear as there are sample sizes for which the KDE method achieves a comparable or even better test error. The test error of the SD method is consistently higher. In dimension $4,$ it decreases, however, faster than the test errors of the FD and KDE method. Based on the comparison, we do advise to use the two-step method without data splitting and to pick the reconstruction with the smallest training loss based on different random initializations.

While the idea to transform an unsupervised learning problem into a supervised learning problem and using supervised learning methods is appealing, we feel that considerable future effort is required to transform this into stable and efficient algorithms.

\section{Related literature}
\label{S: RelLit}

A more direct method for nonparametric density estimation is to use a class of candidate densities $\mF$ and estimate the density by a maximizer of the log-likelihood $$\argmax_{f\in \mF} \, \sum_{i=1}^n \log f(\bX_i),$$ which is equivalent to minimizing the negative log-likelihood or cross-entropy $$\argmin_{f\in \mF} \, - \sum_{i=1}^n \log f(\bX_i).$$ In principle, $\mF$ could be a class of neural networks that is normalized or constrained to yield (approximately) probability density functions. For general classes $\mF,$ nonparametric maximum likelihood estimators have been analyzed in the literature \cite{MR1739079}. A major drawback of this approach is the instability in low-density regions that is caused by the divergence of the logarithm $\lim_{x\downarrow 0} \log(x).$ For this reason, all convergence rate results that we are aware of require that the densities are bounded away from zero. This is rather restrictive as for machine learning applications, one often expects large low- or even zero-density regions. Note that our approach does not require the true density to be bounded away from zero.

Our method is inspired by previous work on asymptotic equivalence that links the (unsupervised) nonparametric density estimation problem to a (supervised) regression-type model. More precisely, it is shown that if the univariate densities $f$ are defined on $[0,1],$ are more than $1/2$-smooth and are bounded away from zero, then, the statistical model converges in the Le Cam  distance to the statistical problem, where we want to recover $f$ by observing $(Y_t)_{t\in [0,1]}$ with
\begin{align}
    dY_t= 2 \sqrt{f(t)} \, dt + n^{-1/2} dW_t, \quad \text{for all} \ t\in [0,1],
    \label{eq.bcuysgefw}
\end{align}
and $W$ is a Brownian motion. On a high level, convergence in Le Cam distance means that the asymptotic statistical properties are in both models the same. Model \eqref{eq.bcuysgefw} behaves similarly as observing $n$ i.i.d.\ pairs $(U_i,Y_i)$ with $U_i$ uniform on $[0,1]$ and 
$Y_i=2\sqrt{f(U_i)}+\eps_i$ for independent noise variables $\eps_i\sim \mN(0,1).$ This establishes the possibility to transfer nonparametric density estimation into a regression model without losing information regarding asymptotic results. 

While the original proof for the asymptotic equivalence statement was non-constructive \cite{nussbaum1996asymptotic}, follow-up work \cite{MR2102503, MR4050245} has identified a transformation mapping the observations in the nonparametric density model to the process $(Y_t)_{t\in [0,1]}$ satisfying \eqref{eq.bcuysgefw}. The two key steps in the construction are a Poissonization step, mapping the density estimation problem with $n$ observations to density estimation with $\mM\sim \Poi(n)$ observations, followed by a step that constructs the response variables $(Y_t)_{t\in [0,1]}$ via a Haar wavelet decomposition and a quantile coupling argument. While the  asymptotic equivalence literature motivates our two-step density estimation method and its analysis, there are still many differences as asymptotic equivalence focuses on bounding the Le Cam distance, whereas we are proposing a specific method to use supervised deep learning for nonparametric density estimation.

The proposed two-step procedure is moreover related to 
Lindsey’s method which transforms parametric estimation in exponential families into a Poisson regression problem \cite{LindseyModelComparison,LindseyProbabilityComparison, EfronTibshiraniExponential}. The first step of this method discretizes the sample space into disjoint bins. The bin counts follow a multinomial distribution that is then approximated by the Poisson distribution. Assuming Poisson distributed bin counts, maximum likelihood estimation of the parameters results then in a Poisson regression problem. A benefit of Lindsey's transformation is that the normalization constant of the exponential family vanishes. This constant is an integral over the entire domain and hard to compute in high dimensions \cite{MoschopoulosStaniswalisExponentialConditionals,GaoHastieLinCDE}. While Lindsey's method returns one observation per bin and has been formulated for exponential families, the proposed method in this work focuses on nonparametric densities and artificially creates a supervised dataset by computing a response vector for each of the datapoints. Approximation of the bin counts by the Poisson distribution occurs in our approach in the proof. 

While finalizing the article, we became aware of the similar two-step density estimation method \cite{HUYNH2020580} proposed in the pattern recognition literature. For the first step, the authors use the band limited maximum likelihood density estimator proposed in \cite{Agarwal20171294}. However, this article provides no theory.

Beyond the mentioned connections to asymptotic equivalence, Lindsey's method, and \cite{HUYNH2020580}, we are unaware of any other density estimation method that is similar to ours. It is important to emphasize that the success of such a two-step procedure relies on a regression method that achieves faster rates than direct density estimation. While this is the case here, for more traditional function spaces, direct density estimation can be shown to be already rate optimal.

\section{Proofs for Section \ref{S: Main Results}}\label{S: Proofs}

\begin{Lemma}
\label{lem.hn_exist}
If $n>1,$ then there exists a $h_n,$ such that $(\log(n)/n)^{1/d}\leq h_n\leq 2(\log(n)/n)^{1/d}$ and $h_n^{-1}$ is a positive integer.
\end{Lemma}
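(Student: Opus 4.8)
The plan is to reduce the statement to an elementary fact about integers in intervals and then verify that fact. Write $a_n := (\log(n)/n)^{1/d}$ and $M_n := 1/a_n = (n/\log(n))^{1/d}$. A bandwidth $h_n$ with the two required properties exists precisely when the closed interval $[1/(2a_n),\, 1/a_n] = [M_n/2,\, M_n]$ contains a positive integer $m$: given such an $m$ one simply sets $h_n := 1/m$, and then $a_n \le h_n \le 2a_n$ while $h_n^{-1}=m$ is a positive integer. So the whole task is to produce an integer in $[M_n/2, M_n]$. The first thing I would record is that $M_n > 1$ for every $n>1$, which follows at once from the inequality $\log(x) < x$ (valid for all $x>0$) applied with $x = n$: it gives $n/\log(n) > 1$, hence $M_n = (n/\log(n))^{1/d} > 1$.

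The remaining step is the elementary claim that any interval $[M/2, M]$ with $M > 1$ contains a positive integer. To see this, let $m$ be the largest positive integer with $m \le M$; this is well defined since $m=1$ qualifies when $M>1$. Then $m \le M$, and by maximality $m+1 > M$, i.e.\ $m > M-1$. If $M \ge 2$, then $M - 1 \ge M/2$, so $m > M/2$ and therefore $m \in (M/2, M] \subseteq [M/2, M]$. If instead $1 < M < 2$, then $m = 1$, and since $M/2 < 1 \le M$ we again obtain $m \in [M/2, M]$. Applying this with $M = M_n$ yields the desired integer $m$, and $h_n := 1/m$ finishes the argument.

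There is no genuine obstacle in this proof; the only point that needs a moment's care is the small-$M_n$ regime $1 < M_n < 2$ (which occurs, for instance, for large dimension $d$ or for the smallest admissible sample sizes), where the generic estimate $m > M_n - 1 \ge M_n/2$ is not available and one instead simply takes $m = 1$. Everything else is bookkeeping, and the constant $2$ in the upper bound on $h_n$ is exactly what makes the interval $[M_n/2, M_n]$ long enough (relative to its position) to be guaranteed to catch an integer once $M_n > 1$.
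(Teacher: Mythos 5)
Your proof is correct, and it takes a genuinely (if mildly) different route from the paper's. You reduce the problem to finding any positive integer in the reciprocal interval $[M_n/2, M_n]$ with $M_n = (n/\log n)^{1/d}$, and then observe directly that a length-$M_n/2$ interval ending at $M_n>1$ must contain an integer, handling the two regimes $M_n \ge 2$ and $1<M_n<2$ separately. The paper instead works on the original interval $[a_n, 2a_n]$ and finds a \emph{dyadic} point there: it uses the fact that for any $y>0$ there is an integer $r$ with $y/2 \le 2^r \le y$, applies this with $y = 2a_n < 2$ to get $h_n = 2^s$ with $s\le 0$, and concludes $h_n^{-1} = 2^{-s}$ is a positive integer. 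Both arguments hinge on the same structural fact — the factor-of-$2$ slack in the bandwidth interval is exactly what guarantees a member of a geometrically or arithmetically spaced discrete set in the reciprocal interval — but yours is a little more elementary (no powers of two, just the floor) and yields an arbitrary integer $h_n^{-1}$, whereas the paper's yields $h_n^{-1}$ as a power of two. Since nothing downstream in the paper uses the dyadic form of $h_n^{-1}$, the two proofs are interchangeable for the purposes of the rest of the argument.
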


\begin{proof}
For all $x\geq 0,$ we have $x< 1+x\leq e^x$ and thus $\log n/n <1$ as well as $0<u_n:=2(\log n/n)^{1/d}<2$ for all $n>1.$ For all $y>0,$ one can find an integer $r$ such that $y/2\leq 2^r \leq y.$ If $y<2,$ we must have $r\leq 0.$ Thus, there exists $s\leq 0$ such that $u_n/2\leq 2^s\leq u_n.$ Set $h_n=2^s.$ Since $s\leq 0,$ we must have $h_n^{-1}=2^{-s},$ which is an integer.
\end{proof}

\subsection{Proof of Theorem \ref{T: Oracle Inequality}}\label{S: ProofOracle}
The response variables $Y_i$ in the regression model \eqref{eq.regr_model} are identically distributed, but they are not jointly independent as they all depend through the kernel density estimator on the subsample $(\bX'_{\ell})_{\ell=1}^{n}.$

To deal with the dependence induced by the kernel density estimator, we partition the hypercube $[0,1]^d$ into $h_n^{-d}$ hypercubes with sidelength $h_n.$ By construction $h_n^{-1}$ is an integer and therefore no boundary issues arise. The centers of these $h_n^{-d}$ hypercubes are given by the vectors $h_n(k_1-1/2,k_2-1/2 ,\ldots,k_d-1/2 )^\top \in[0,1]^d$ with $k_1,k_2,\ldots,k_d\in\{1,\ldots,h_n^{-1}\}.$ By numbering these points (the specific numbering of the points is irrelevant), we assign to each center an index in $\mJ:=\{1,\ldots,h_n^{-d}\}.$ The $j$-th bin $\mB_j$ is then the $|\cdot |_{\infty}$-norm ball of radius $h_n/2$ around the $j$-th center $C(\mB_j)$ in this index set. To avoid that boundary points are in two bins, we include a boundary point only if is not already included in a bin with smaller index in the ordering induced by $\mJ$. This construction gives a partition of $[0,1]^d.$ As each bin is a hypercube with sidelength $h_n$, the Lebesgue measure is $h_n^d$ (in $\mathbb{R}^d$). The neighborhood of a bin $\mB_j,$ denoted by $NB(\mB_j),$ are all bins whose centers are at most $|\cdot|_\infty$-distance $h_n$ away from the center of $\mB_j$, in other words,
\begin{align}
    NB(\mB_j)=\bigcup_{\ell : |C(\mB_j)-C(\mB_{\ell})|_{\infty}\leq h_n}\mB_{\ell}
    \label{eq.NB_def}
\end{align}
\noindent
In two dimensions this neighborhood is also known as the Moore neighborhood. 

We further subdivide the bins into equivalence classes. For all sufficiently large $n,$ $h_n\leq 1/3$ and the hypercube $[0,3h_n]^d$ contains exactly $3^d$ bins. Denote by $(j_s)_{s=1}^{3^d}$ the indices of these bins and define the index set $\mJ_s\subset\mJ$ by
\begin{equation*}
    \mJ_s:=\Big\{\ell\in\mJ: \frac 1{3h_n}\big(C(\mB_{\ell})-C(\mB_{j_s})\big)\in\mathbb{Z}^d \Big\}.
\end{equation*}
By construction, the sets $\mJ_s$ are mutually disjoint and $\bigcup_{s}\mJ_s=\mJ.$

Fix a $j\in \mJ.$ Since the kernel $K$ in the kernel density estimator has bandwidth $h_n$ and support contained in $[-1,1]$, the point estimator $\KDE(\bx)$ only depends on the data points from the kernel data set $(\bX'_{\ell})_{\ell=1}^{n}$ that are in $NB(\mB_j).$ 

More generally, for two different indices $j,\widetilde{j}\in\mJ_{s}$, $j\neq \widetilde{j}$ and points $\bx_1\in\mB_j$, $\bx_2\in\mB_{\widetilde{j}},$ the point estimators $\KDE(\bx_1)$ and $\KDE(\bx_2)$ depend on $\{\bX'_{\ell}:\bX'_{\ell} \in NB(\mB_j), 
\ell=1,\ldots,n\}$ and $\{\bX'_{\ell}:\bX'_{\ell} \in NB(\mB_{\wt j}), 
\ell=1,\ldots,n\},$ respectively. The latter two sets are dependent if $n$ is fixed (knowing that a data point is in one of the bins means that there can be at most $n-1$ in any of the other bins). If we instead assume that the sample size of the data set $(\bX_\ell')_{\ell=1}^n$ is not $n$ but $\mM$ with $\mM\sim\Poi(n)$, then $\{\bX'_{\ell}:\bX'_{\ell} \in A, 
\ell=1,\ldots,\mM\}$ and $\{\bX'_{\ell}:\bX'_{\ell} \in B, 
\ell=1,\ldots,\mM\}$ are independent, whenever $A$ and $B$ are disjoint sets. This will formally be shown in the proof of Lemma \ref{L: Bound on the noise-dependend term}. Using Poisson point process theory, we also show in the proof of Lemma \ref{L: Bound on the noise-dependend term} that $\KDE(\bx_1)$ and $\KDE(\bx_2)$ are independent.

Proving oracle inequalities for the risk $R(\widetilde{f},f_0):=\mathbb{E}_{f_0,\bX}[(\widetilde{f}(\bX)-f_0(\bX))^2]$ in the standard i.i.d.\ setting typically first derives an oracle inequality for the empirical risk $\widehat{R}_n(\widehat{f},f_0)$ as
\begin{equation*}
    \widehat{R}_n(\widehat{f},f_0):=\mathbb{E}_{f_0}\left[\frac{1}{n}\sum_{i=1}^n\big(\widehat{f}(\bX_i)-f_0(\bX_i)\big)^2\right].
\end{equation*}
Here empirical refers to the fact that the estimator $\wh f$ is evaluated at the data points $\bX_1,\ldots, \bX_n.$ The derivation of an oracle inequality for the empirical risk can be further subdivided into several steps. The bound below refers to the step where our setting and the i.i.d.\ case differ the most. The proof (presented in Section \ref{S: Technical Proofs}) relies heavily on the construction of the bins above combined with Poissonization. Recall that $\eps_i=Y_i-f(\bX_i).$
\begin{Lemma}\label{L: Bound on the noise-dependend term}
Consider the framework of Theorem \ref{T: Oracle Inequality}. For any fixed $f\in\mF$ and $\log^2(n)\log(n\vee \mN_{\mF}(\delta))\leq n,$ it holds that

 \begin{equation*}
    \begin{aligned}
    &\bigg|\mathbb{E}_{f_0}\bigg[\frac{2}{n}\sum_{i=1}^n\epsilon_i(\widehat{f}(\bX_i)-f(\bX_i))\bigg]\bigg|\\
    &\leq 2^{d+6}14e^2 \|K\|_{\infty}^{2d}F^33^{\frac{7d}{2}}\bigg(\sqrt{\widehat{R}_n(\widehat{f},f_0)}\log(n)
    \sqrt{\frac{\log(n\vee \mN_{\mF}(\delta))}{n}}+
    \log(n)\frac{\log(n\vee \mN_{\mF}(\delta))}{n}+\delta\bigg)\\
    &\quad + \frac{46 F^22^{d}\|K\|_{\infty}^{d}}{n} +8h_n^{2\beta}F^2d^{2\beta} \|K\|_\infty^{2d}  +\frac{\mathbb{E}_{\bX}[(f_0(\bX)-f(\bX))^2]}{4}+\frac{\widehat{R}_n(\widehat{f},f_0)}{4}.
    \end{aligned}
 \end{equation*}    
\end{Lemma}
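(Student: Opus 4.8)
The plan is to split the kernel noise $\eps_i=\KDE(\bX_i)-f_0(\bX_i)$ into a deterministic bias and a conditionally centered stochastic part, dispatch the bias by a Young-type inequality, and control the stochastic part by a covering-number union bound fed by the bin decomposition and a Poissonization argument. Concretely, set $\bar f(\bx):=\mathbb{E}_{\bX'}[\KDE(\bx)]$, $b(\bx):=\bar f(\bx)-f_0(\bx)$, and $\xi_i:=\KDE(\bX_i)-\bar f(\bX_i)$, so that $\eps_i=\xi_i+b(\bX_i)$ with $\mathbb{E}[\xi_i\mid\bX_i]=0$. Since $K$ has order $\floor\beta$ and support in $[-1,1]$ and $f_0$ extends to a $\beta$-H\"older density on $\mathbb{R}^d$, Taylor expansion gives $\|b\|_\infty\le c\,h_n^{\beta}$ with $c$ depending on $K,d,\beta,Q$. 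For the bias part of the cross term I would use $2|b(\bX_i)(\wh f-f)(\bX_i)|\le \tau b(\bX_i)^2+\tau^{-1}(\wh f-f)^2(\bX_i)$ together with $(\wh f-f)^2\le 2(\wh f-f_0)^2+2(f_0-f)^2$ and the identity $\widehat R_n(\wh f,f_0)=\mathbb{E}[\tfrac1n\sum_i(\wh f-f_0)^2(\bX_i)]$; choosing $\tau$ a large enough constant produces the term $8h_n^{2\beta}F^2d^{2\beta}\|K\|_\infty^{2d}$ plus $\tfrac14\widehat R_n(\wh f,f_0)+\tfrac14\mathbb{E}_{\bX}[(f_0-f)^2]$, leaving only $\big|\mathbb{E}\big[\tfrac2n\sum_i\xi_i(\wh f-f)(\bX_i)\big]\big|$ to bound.

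For the stochastic term I would condition throughout on the regression sample $\bX_1,\dots,\bX_n$, so that $S(g):=\tfrac1n\sum_i\xi_i g(\bX_i)$ is a function of the kernel sample alone with $\mathbb{E}[S(g)\mid\bX_{1},\dots,\bX_n]=0$ for every fixed $g$. The plan is to Poissonize: replace $\bX'_1,\dots,\bX'_n$ by $\bX'_1,\dots,\bX'_{\mM}$ with $\mM\sim\Poi(n)$, viewed as a Poisson point process with intensity $nf_0$, so that by Lemma~\ref{lem.Poissonization} any tail probability of $S(g)$ increases by at most the factor $\sqrt{2e\pi n}$. Using the bins $\mB_\ell$ and the $3^d$ equivalence classes $\mJ_1,\dots,\mJ_{3^d}$ constructed before the statement, decompose $nS(g)=\sum_{s=1}^{3^d}\sum_{\ell\in\mJ_s}W_\ell$ with $W_\ell:=\sum_{i:\bX_i\in\mB_\ell}\xi_i g(\bX_i)$; for fixed $s$ the blocks $\{W_\ell\}_{\ell\in\mJ_s}$ depend on the Poisson process restricted to the pairwise disjoint neighborhoods $NB(\mB_\ell)$, hence are independent and mean zero given $\bX_1,\dots,\bX_n$. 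This is the step where the bin/equivalence-class machinery is indispensable.

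I would then run a Bernstein argument on a high-probability event $E$ on which every hypercube of sidelength $3h_n$ contains $\lesssim\log n$ points of both samples (Poisson / binomial concentration, with $\mathbb{P}(E^c)\lesssim 1/n$). On $E$ each block satisfies $|W_\ell|\lesssim F\|K\|_\infty^d\log n$ and $\sum_{\ell\in\mJ_s}\Var(W_\ell\mid \bX_1,\dots,\bX_n)\lesssim \tfrac{\log n}{nh_n^d}\sum_i g(\bX_i)^2\asymp \sum_i g(\bX_i)^2$, so Bernstein applied to each of the $3^d$ classes, combined with the Poissonization factor $\sqrt{2e\pi n}$, gives a tail for $S(g)$ of the schematic form $\sqrt{2e\pi n}\,3^d\exp\big(-c\min(u^2n/(\log^2 n\,\|g\|_n^2),\,un/(\log^2 n))\big)$ with $\|g\|_n^2:=\tfrac1n\sum g(\bX_i)^2$. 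Taking a $\delta$-net $f_1,\dots,f_{\mN_{\mF}(\delta)}$ of $\mF$, writing $\wh f$ within $\delta$ of some $f_{j^\ast}$, and bounding $\big|\tfrac2n\sum_i\xi_i(\wh f-f)(\bX_i)\big|\le\max_j\big|\tfrac2n\sum_i\xi_i(f_j-f)(\bX_i)\big|+\tfrac{2\delta}{n}\sum_i|\xi_i|$ (the last term being $\lesssim\delta$ since $\mathbb{E}|\xi_1|\le(\mathbb{E}\xi_1^2)^{1/2}\lesssim(nh_n^d)^{-1/2}\le1$), I would peel the net points according to the size of $\|f_j-f\|_n$ into $O(\log n)$ shells and union-bound within each shell, choosing $u$ of order $\|g\|_n\log(n)\sqrt{\log(n\vee\mN_{\mF}(\delta))/n}+\log(n)\log(n\vee\mN_{\mF}(\delta))/n$. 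Integrating the resulting bound on $E$, replacing $\|f_{j^\ast}-f\|_n$ by $\|\wh f-f\|_n+\delta\le(2\|\wh f-f_0\|_n^2+2\|f_0-f\|_n^2)^{1/2}+\delta$, taking expectations via $\mathbb{E}\|\wh f-f_0\|_n^2=\widehat R_n(\wh f,f_0)$, $\mathbb{E}\|f_0-f\|_n^2=\mathbb{E}_{\bX}[(f_0-f)^2]$ and Jensen's inequality $\mathbb{E}\sqrt{\cdot}\le\sqrt{\mathbb{E}\cdot}$, yields the first three summands of the claimed bound; the contribution of $E^c$ is handled crudely via $\big|\tfrac2n\sum_i\xi_i(\wh f-f)(\bX_i)\big|\le 4F\cdot\tfrac1n\sum_i|\xi_i|$ and $\mathbb{E}[\tfrac1n\sum_i|\xi_i|\1_{E^c}]\lesssim F\|K\|_\infty^d/n$, producing the $46F^22^d\|K\|_\infty^d/n$ term.

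The hard part will be two-fold. First, genuinely neutralizing the dependence among the $\xi_i$: this rests entirely on the fact that under Poissonization statistics supported on disjoint neighborhoods become exactly independent, so one must verify carefully that (after Poissonizing) $\KDE$ on $\mB_\ell$ depends only on the Poisson points in $NB(\mB_\ell)$ and that distinct bins within the same class $\mJ_s$ have disjoint neighborhoods. Second, the bookkeeping needed to keep the bound \emph{localized} in $\widehat R_n(\wh f,f_0)$ rather than using the crude $\|g\|_\infty\le 2F$: this forces the peeling over shells, requires controlling the random bin occupancies $m_\ell=\#\{i:\bX_i\in\mB_\ell\}$ (whose squared sum has expectation $\lesssim n\log n$), and makes it necessary to track how the Poissonization factor $\sqrt{2e\pi n}$ and the Poisson fluctuations of the bin counts each cost one logarithmic factor in $n$, which is exactly why the $\log n$ and $\log^2 n$ powers appear in the final bound.
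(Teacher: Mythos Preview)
Your high-level strategy coincides with the paper's: split $\eps_i$ into the kernel bias $b(\bX_i)=\mathbb{E}[\eps_i\mid\bX_i]$ and the centered piece $\xi_i$, dispatch the bias by a Young inequality (this is the paper's term (III)), then control the centered cross term via Poissonization of the kernel sample, the $3^d$ equivalence classes of bins, block independence, a Bernstein inequality, and a covering argument. One simplification you miss is that the paper first writes $\widehat f-f=(\widehat f-f_0)+(f_0-f)$; the second piece paired with $\xi_i$ has exactly zero expectation (their term (II)), so only $f_{k^\ast}-f_0$ enters the stochastic analysis and the final bound is cleanly localized by $\sqrt{\widehat R_n}$ without the extra $\sqrt{\mathbb{E}_\bX[(f_0-f)^2]}$ factor you would have to reabsorb. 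Where the paper uses a self-normalization $z_k=\sqrt{\log N}\vee\sqrt{n}\|f_k-f_0\|_n$ and bounds $\mathbb{E}[\xi_{k^\ast}^2/z_{k^\ast}^2]$, you peel; both work.

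There is one genuine technical pitfall in your plan. You propose to run the \emph{bounded} Bernstein inequality on a global event $E$ that constrains the occupancy of the hypercubes for \emph{both} samples, including the Poissonized kernel sample. But the block independence of the $W_\ell$, $\ell\in\mJ_s$, holds because each $W_\ell$ is a functional of the Poisson process restricted to $NB(\mB_\ell)$; once you intersect with a \emph{global} event $E$ that depends on the Poisson process in every neighborhood, the variables $W_\ell\mathbf{1}_E$ are no longer independent, and conditioning on $E$ likewise destroys the product structure. The paper avoids this in two ways: (i) its high-probability event $A$ constrains only the \emph{regression} sample $\bX_1,\dots,\bX_n$ (which is already conditioned on), never the kernel sample; and (ii) it uses the \emph{moment} form of Bernstein, bounding $\mathbb{E}\big[|Z_j|^m\mid\bX_1,\dots,\bX_n\big]$ via the exact Poisson moments of the neighborhood counts (Bell polynomials, $\mathbb{E}[\mathrm{Poi}(\lambda)^m]\le m!e^{m-1}\lambda^m$), so no truncation of the Poisson sample is needed. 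If you want to keep the bounded version, you must localize the truncation---replace $W_\ell$ by $W_\ell\mathbf{1}_{E_\ell}$ with $E_\ell$ depending only on the Poisson points in $NB(\mB_\ell)$, so that $\{W_\ell\mathbf{1}_{E_\ell}\}_{\ell\in\mJ_s}$ stay independent---and then control $\sum_\ell W_\ell\mathbf{1}_{E_\ell^c}$ separately. Finally, a small correction on log-counting: the Poissonization factor $\sqrt{2e\pi n}$ does not cost a logarithm; it is killed by choosing the Bernstein tail level so that $N e^{-2\log N}\sqrt{n}\to 0$. The single $\log n$ in the leading term comes solely from the regression-sample bin-occupancy bound entering the Bernstein variance $v$.
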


With this lemma in place, we can prove in Section \ref{S: Technical Proofs} the following bound on the empirical risk. This is similar to step (III) in the oracle inequality of Lemma 4 in \cite{NonParametricRegressionReLU}.

\begin{Proposition}\label{P: Bound empirical risk}
Consider the framework of Theorem \ref{T: Oracle Inequality}. For any fixed $f\in\mF$ and $\log^2(n)\log(n\vee \mN_{\mF}(\delta))\leq n,$
\begin{equation*}
    \begin{aligned}
    \widehat{R}_n(\widehat{f},f_0)&\leq \delta2^{d+6}38e^2 \|K\|_{\infty}^{2d}F^33^{\frac{9d}{2}}\\
        &+\frac{10}{3}\mathbb{E}_{\bX}\left[(f(\bX)-f_0(\bX))^2\right]+\frac{8}{3}\Delta_n(\widehat{f},f_0)\\
        &+2^{d+6}38e^2 \|K\|_{\infty}^{2d}F^33^{\frac{7d}{2}}\log(n)\frac{\log(n\vee \mN_{\mF}(\delta))}{n}\\
        &+\frac{124F^22^{d}\|K\|_{\infty}^{d}}{n}+22h_n^{2\beta}F^2d^{2\beta} \|K\|_\infty^{2d}\\
        &+4^{d+7}19^2e^4 \|K\|_{\infty}^{4d}F^63^{7d}\log^2(n)\frac{\log(n\vee \mN_{\mF}(\delta))}{n}.
    \end{aligned}
\end{equation*}
\end{Proposition}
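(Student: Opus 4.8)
The approach is the classical ``basic inequality'' argument for least-squares estimators, with all of the work caused by the dependence among the noise variables $\epsilon_i$ already taken care of by Lemma~\ref{L: Bound on the noise-dependend term}. First I would use the definition of the optimization error in \eqref{eq.opt_error_def}: for any fixed $f\in\mF$, since $\inf_{g\in\mF}\tfrac1n\sum_i(Y_i-g(\bX_i))^2\leq \tfrac1n\sum_i(Y_i-f(\bX_i))^2$ holds for every realization, taking $\mathbb{E}_{f_0}$ gives the deterministic inequality
\[
\mathbb{E}_{f_0}\Big[\tfrac{1}{n}\sum_{i=1}^n(Y_i-\widehat{f}(\bX_i))^2\Big]\leq \mathbb{E}_{f_0}\Big[\tfrac{1}{n}\sum_{i=1}^n(Y_i-f(\bX_i))^2\Big]+\Delta_n(\widehat{f},f_0).
\]

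Second, I would substitute $Y_i=f_0(\bX_i)+\epsilon_i$ from \eqref{eq.regr_model} and expand the squares on both sides. Writing $a_i:=f_0(\bX_i)-\widehat f(\bX_i)$ and $b_i:=f_0(\bX_i)-f(\bX_i)$, the identical pure-noise terms $\tfrac1n\sum_i\epsilon_i^2$ cancel, and since $a_i-b_i=f(\bX_i)-\widehat f(\bX_i)$ the remaining noise contribution collapses to the single term $\tfrac2n\sum_i\epsilon_i(\widehat f(\bX_i)-f(\bX_i))$. Because the $\bX_i$ are i.i.d.\ with density $f_0$ and $f$ is deterministic, $\mathbb{E}_{f_0}[\tfrac1n\sum_i b_i^2]=\mathbb{E}_{\bX}[(f_0(\bX)-f(\bX))^2]$, and rearranging yields
\[
\widehat{R}_n(\widehat{f},f_0)\leq \mathbb{E}_{\bX}\big[(f_0(\bX)-f(\bX))^2\big]+\Big|\mathbb{E}_{f_0}\Big[\tfrac{2}{n}\sum_{i=1}^n\epsilon_i\big(\widehat{f}(\bX_i)-f(\bX_i)\big)\Big]\Big|+\Delta_n(\widehat{f},f_0).
\]

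Third, I would bound the cross term by Lemma~\ref{L: Bound on the noise-dependend term}, which is applicable under the stated assumption $\log^2(n)\log(n\vee\mN_\mF(\delta))\leq n$. The resulting upper bound contains two self-referential pieces: a term $\tfrac14\widehat{R}_n(\widehat{f},f_0)$ and a term of the form $(\mathrm{const})\,\sqrt{\widehat{R}_n(\widehat f,f_0)}\,\log(n)\sqrt{\log(n\vee\mN_\mF(\delta))/n}$. I would handle the square-root term by the elementary inequality $uv\le \tfrac14u^2+v^2$ (equivalent to $(u-2v)^2\ge 0$) with $u=\sqrt{\widehat R_n(\widehat f,f_0)}$, which produces a further $\tfrac14\widehat{R}_n(\widehat f,f_0)$ together with a term of order $\log^2(n)\log(n\vee\mN_\mF(\delta))/n$ with constant $(\mathrm{const})^2$. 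The two $\tfrac14$-pieces add to $\tfrac12\widehat{R}_n(\widehat f,f_0)$; moving this to the left-hand side and multiplying through by $2$ gives an inequality of exactly the claimed form.

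Finally, it remains to check that the constants arrange as stated, which is pure bookkeeping. Doubling the terms takes $\mathbb{E}_{\bX}[(f_0-f)^2]$ to coefficient $\tfrac52\le\tfrac{10}{3}$, $\Delta_n$ to $2\le\tfrac83$, the $1/n$ constant from $46$ to $124$ and the $h_n^{2\beta}$ constant from $8$ to $22$ (both with room to spare), the $\delta$ and $\log(n)\log(n\vee\mN_\mF(\delta))/n$ constants from $28$ into the stated slack $38$ (and $3^{7d/2}\le 3^{9d/2}$ for the $\delta$ term), and the newly created quadratic-in-$\log$ term has constant $2(2^{d+6}\cdot 14\,e^2\|K\|_\infty^{2d}F^33^{7d/2})^2=4^{d+7}\cdot 98\,e^4\|K\|_\infty^{4d}F^63^{7d}\le 4^{d+7}19^2e^4\|K\|_\infty^{4d}F^63^{7d}$. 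The only place where the non-i.i.d.\ structure genuinely enters is Lemma~\ref{L: Bound on the noise-dependend term} (and through it the binning and Poissonization of Section~\ref{S: ProofOracle}); once that lemma is granted, the present proposition is just this standard reduction together with the quadratic absorption step, and I do not anticipate any real obstacle beyond careful constant tracking.
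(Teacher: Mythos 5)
Your argument is correct and follows essentially the same route as the paper. You derive the basic inequality via the definition of $\Delta_n$, expand the squares to reduce everything to the cross term $\mathbb{E}_{f_0}[\tfrac{2}{n}\sum_i\epsilon_i(\widehat f(\bX_i)-f(\bX_i))]$, invoke Lemma~\ref{L: Bound on the noise-dependend term}, and then absorb the self-referential terms — exactly the paper's strategy. The only difference is bookkeeping order: you apply $uv\le\tfrac14 u^2+v^2$ to the $\sqrt{\widehat R_n}$ term first, collect the two $\tfrac14\widehat R_n$ pieces into $\tfrac12\widehat R_n$, move it left, and multiply by $2$; the paper instead moves the $\tfrac14\widehat R_n$ term left first (multiplying by $\tfrac43$) and then applies its packaged lemma $|a|\le 2\sqrt{a}c+d\Rightarrow|a|\le 2d+4c^2$. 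Your ordering is slightly more economical with constants (your quadratic term ends up at $4^{d+7}\cdot 98\,e^4\cdots$ rather than $4^{d+7}\cdot 361\,e^4\cdots$), and your constant-tracking for the remaining terms ($\tfrac52\le\tfrac{10}{3}$, $2\le\tfrac83$, $92\le 124$, $16\le 22$, $28\le 38$, $98\le 19^2$, and the slack $3^{7d/2}\le 3^{9d/2}$) is consistent with the claimed bound.
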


We now have all ingredients to finish the proof of Theorem \ref{T: Oracle Inequality}.

\begin{proof}[Proof of Theorem \ref{T: Oracle Inequality}]
If $\log^2(n)\log(n\vee \mN_{\mF}(\delta))\geq n$, the statement follows with $C_1=4F^2$ by observing that $R(\widehat{f},f_0)\leq 4F^2.$

It remains to consider the case $\log^2(n)\log(n\vee \mN_{\mF}(\delta))\leq n$. The proof of Lemma 4, Part (I) in \cite{NonParametricRegressionReLU} states that for any $\epsilon\in(0,1],$
\begin{equation}\label{Eq: Part (I) of Lemma 4 of cite NonParametricRegressionReLU}
    \begin{aligned}
    &(1-\epsilon)\widehat{R}_n(\widehat{f},f_0)-\frac{F^2}{n\epsilon}\Big(15\log\big(\mN_{\mF}(\delta)\big)+75\Big)-26\delta F\\
    &\leq R(\widehat{f},f_0)\leq (1+\epsilon)\bigg(\widehat{R}_n(\widehat{f},f_0)+(1+\epsilon)\frac{F^2}{n\epsilon}\Big(12\log(\mN_{\mF}(\delta))+70\Big)+26\delta F\bigg).
    \end{aligned}
\end{equation}
This lemma for the standard nonparametric regression problem relates the risk to its empirical counterpart. The inequality and its proof only depend on the $\bX_i$ and on the function class $\mF$, not on the noise or the response variables $Y_i.$ Since in our regression model \eqref{eq.regr_model} the variables $\bX_i$ are i.i.d. (the dependence is induced by the response variables $Y_i$ and $\epsilon_i$), this inequality is still valid.

Substituting the bound on $\widehat{R}_n(\widehat{f},f_0)$ from Proposition \ref{P: Bound empirical risk} in \eqref{Eq: Part (I) of Lemma 4 of cite NonParametricRegressionReLU}, choosing $\eps=1$ and $f$ as a minimizer over $\mF$ of $\mathbb{E}_{\bX}\left[(f(\bX)-f_0(\bX))^2\right]$, using the fact that $h_n \leq 2(\log(n)/n)^{1/d}$, and replacing the explicit constants by $C_1,C_2,C_3$ yields the result.
\end{proof}

\subsection{Proof of Theorem \ref{T: Convergence Rates}}

The following lemma provides a bound on the covering entropy.

\begin{Lemma}[Lemma 5 combined with Remark 1 of \cite{NonParametricRegressionReLU}]\label{L: Covering Entropy}
    For any $\delta>0$
\begin{equation*}
    \log\big(\mN_{\mF(L,\bp,s,\infty)}(\delta)\big)\leq (s+1)\log\left(2^{2L+5}\delta^{-1}(L+1)p_0^2p_{L+1}^2s^{2L}\right).
\end{equation*}
\end{Lemma}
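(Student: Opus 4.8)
This is exactly Lemma~5 of \cite{NonParametricRegressionReLU}, in the explicit form recorded in Remark~1 there, so the shortest route is to invoke that reference verbatim. For completeness I would reproduce the standard metric-entropy argument for a parametrized function class. The plan is: (i) write $\mF(L,\bp,s,\infty)$ as a finite union, over the possible \emph{locations} of the at most $s$ active parameters, of the images of the cube $[-1,1]^{s}$ under the map $\theta\mapsto\bf_\theta$ that assigns to a parameter vector the corresponding ReLU network; (ii) show that, for a fixed active-parameter pattern, this map is Lipschitz from $(|\cdot|_\infty$ on $[-1,1]^s)$ to $(\|\cdot\|_\infty$ on $[0,1]^{p_0})$ with an explicit constant $\Lambda$; (iii) cover each cube by a grid of mesh $\delta/\Lambda$ and assemble.

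For (i), a network with architecture $(L,\bp)$ has $T=\sum_{j=0}^{L}(p_{j+1}p_j+p_j)$ parameters, with $T\le (L+1)(\max_j p_j+1)^2$; choosing which $\le s$ of them are nonzero gives at most $(T+1)^{s}$ patterns, and for each the free coordinates lie in $[-1,1]^{s}$. For (ii), let $\bf_\theta,\bf_{\theta'}$ share a pattern with $|\theta-\theta'|_\infty\le\epsilon$. Propagating the perturbation layer by layer and using that $\sigma$ is $1$-Lipschitz gives $\|\bf_\theta-\bf_{\theta'}\|_{\infty}\le\Lambda\epsilon$, where $\Lambda$ is controlled by (a) the uniform boundedness of every hidden activation on the compact cube $[0,1]^{p_0}$ — each weight matrix has entries $\le 1$ and at most $s$ nonzero entries, hence $\ell_\infty\!\to\!\ell_\infty$ operator norm $\le s$, and there are only finitely many layers, which is precisely why no truncation level $F$ is needed — and (b) the amplification $\prod_{\ell>j}\|W_\ell\|\le s^{L-j}$ of a perturbation entering at layer $j$ as it propagates forward, the factor multiplying $\delta W_j$ being the (bounded) input to layer $j$. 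Summing the $L+1$ layerwise contributions and keeping the dimensions $p_0,p_{L+1}$ yields $\Lambda$; bounding the hidden widths crudely (dead nodes may be relabelled) and collecting $2,\,(L+1),\,p_0,\,p_{L+1}$ and the powers of $s$ is what produces the exponent $2L$ and the constant $2^{2L+5}$. For (iii), a mesh-$\delta/\Lambda$ grid covers $[-1,1]^{s}$ in $|\cdot|_\infty$ by at most $(2\Lambda/\delta+1)^{s}$ points, so by the Lipschitz bound
\begin{equation*}
\log\mN_{\mF(L,\bp,s,\infty)}(\delta)\le s\,\log\!\big((T+1)(2\Lambda/\delta+1)\big),
\end{equation*}
and it remains to bound $T+1$ and $\Lambda$ crudely so that the argument of the logarithm is at most $2^{2L+5}\delta^{-1}(L+1)p_0^2p_{L+1}^2s^{2L}$, the extra unit in $(s+1)$ absorbing the rounding in the grid count. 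Remark~1 of \cite{NonParametricRegressionReLU} additionally notes that, since clipping a function to $[-F,F]$ is a $1$-Lipschitz operation, the same bound transfers to the truncated classes $\mF(L,\bp,s,F)$ used in the sequel.

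The routine parts are the sparsity-pattern count and the final rounding-and-simplification. The technical heart — and the only real obstacle — is the Lipschitz step: the layerwise propagation itself is elementary, but verifying boundedness of all intermediate quantities with parameters bounded merely by $1$ (no $F$), and then matching every numerical and dimensional factor to the stated closed form, is where care is required.
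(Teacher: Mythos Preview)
Your proposal is correct and, in fact, goes beyond what the paper does: the paper provides no proof for this lemma at all, simply stating it as a direct quotation of Lemma~5 and Remark~1 of \cite{NonParametricRegressionReLU}. Your first sentence---invoke the reference verbatim---is therefore exactly the paper's approach; the additional Lipschitz-and-grid sketch you supply is a faithful outline of the argument in the cited source but is not reproduced in the present paper.
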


The proof of Theorem 1 in \cite{NonParametricRegressionReLU} (see also \cite{10.1214/24-AOS2351}) derives the following bound for the approximation error for function approximation in the function class $\mG(q,\bd,\bt,\balpha,Q')$ by sparsely connected deep ReLU networks.

\begin{Theorem}\label{T: Approx Result}
For every function $g\in\mG(q,\bd,\bt,\balpha,Q')$ and whenever
    \begin{compactitem}
        \item[(i)] $\sum_{i=1}^q \tfrac{\alpha_i+t_i}{2\alpha_i^*+t_i}\log_2(4t_i\vee 4\alpha_i)\log_2(n)\leq L \lesssim n\phi_n,$
        \item[(ii)] $ n\phi_n\lesssim \min_{i=1,\hdots,L}p_i,$
        \item[(iii)] $s\asymp n\phi_n\log(n),$
        \item[(iv)] $F\geq \max\{Q',1\},$
    \end{compactitem}
    then there exists a neural network $H\in \mF(L,\bp,s,F)$ 
    and a constant $C_8$ only depending on $q,\bd,\bt,\balpha,F$ and the implicit constants in (i), (ii) and (iii), such that
    \begin{equation*}
        \|g-H\|_{\infty}^2\leq C_8\phi_n.
    \end{equation*}
\end{Theorem}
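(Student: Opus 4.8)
As noted in the excerpt, this is the approximation bound established in the proof of Theorem~1 of \cite{NonParametricRegressionReLU}, so the plan is to reconstruct that argument. It proceeds in three layers: (a) build ReLU networks that approximately multiply and hence realise local polynomials; (b) from these, build a network approximating a single $\alpha$-H\"older function of $t$ variables at rate $N^{-\alpha}$ with size polynomial in $N^{t}$; (c) compose the component networks, propagate the errors, and balance the network size against the target accuracy.

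\emph{Local polynomials.} First I would recall the Yarotsky/Telgarsky sawtooth construction: iterating the ``tooth'' map gives a depth-$O(m)$, $O(m)$-parameter ReLU network $\mathrm{Sq}_m$ with $\|\mathrm{Sq}_m(x)-x^{2}\|_{L^{\infty}[0,1]}\le 2^{-2m}$, and polarisation $xy=\tfrac14\big((x+y)^{2}-(x-y)^{2}\big)$ turns this into an approximate bivariate multiplication gadget of the same order; multiplying $M$ factors is a binary tree of such gadgets of depth $O(\log_2 M)$. Combining these gadgets with ReLU hat functions, one assembles on a regular grid of $[0,1]^{t}$ with $N^{t}$ cells a network that realises (approximately) $\sum_{\text{grid points}}$ (local Taylor polynomial of degree $\lfloor\alpha\rfloor$)$\,\times\,$(partition-of-unity bump). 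For $g\in\mH^{\alpha}_{t}$ this has uniform error $\lesssim N^{-\alpha}$, depth $\lesssim\log_2 N\cdot\log_2(t\vee\alpha)$, width $\lesssim N^{t}$, and $\lesssim N^{t}\log N$ active parameters; all weights are bounded and dividing them down into $[-1,1]$ costs only a constant number of extra layers carrying the normalising constants.

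\emph{Parallelise, clip, compose.} Stacking the networks for the scalar components $g_{i1},\ldots,g_{i,d_{i+1}}$ in parallel --- each reading only its $t_i$ relevant inputs, so the input wiring stays sparse --- yields $\tilde g_i\colon[a_i,b_i]^{d_i}\to\mathbb R^{d_{i+1}}$ with $\|\tilde g_i-g_i\|_\infty\lesssim N_i^{-\alpha_i}$, of size governed by $t_i$ and $d_{i+1}$ rather than $d_i$; post-composing each coordinate with an $x\mapsto\sigma(x)-\sigma(x-1)$-type clipping gadget (after affine rescaling) keeps it in the box where $g_{i+1}$ and its H\"older extension live. Put $H=\tilde g_q\circ\cdots\circ\tilde g_0$. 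The heart of the proof is the error recursion: writing $e_i$ for the sup-norm error of the first $i{+}1$ composed networks against the first $i{+}1$ composed true maps, splitting the $i$-th step into an approximation term for $\tilde g_i$ and a stability term for $g_i$, and using that $g_i$ is $(\alpha_i\wedge 1)$-H\"older on its compact image, gives $e_i\lesssim N_i^{-\alpha_i}+e_{i-1}^{\,\alpha_i\wedge 1}$. Unrolling this --- which is exactly where the failure of H\"older exponents above $1$ to multiply forces only $\alpha_\ell\wedge 1$ into the estimate --- yields $\|H-f\|_\infty\lesssim\sum_{i=0}^{q}N_i^{-\alpha_i^{*}}$ with $\alpha_i^{*}=\alpha_i\prod_{\ell=i+1}^{q}(\alpha_\ell\wedge 1)$.

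\emph{Balancing and bookkeeping.} Choosing $N_i$ with $N_i^{t_i}\asymp n^{t_i/(2\alpha_i^{*}+t_i)}$ makes $N_i^{-\alpha_i^{*}}\asymp n^{-\alpha_i^{*}/(2\alpha_i^{*}+t_i)}\le\sqrt{\phi_n}$ for every $i$, hence $\|H-f\|_\infty^2\lesssim\phi_n$, which is the claimed $C_8\phi_n$. It then remains to check that one architecture serves all $q{+}1$ components: the required depth is $\asymp\log_2 n$ times $\sum_i\tfrac{\alpha_i+t_i}{2\alpha_i^{*}+t_i}\log_2(4t_i\vee4\alpha_i)$ (with $\log_2 N_i\asymp\tfrac{\log_2 n}{2\alpha_i^{*}+t_i}$ and the binary-tree depth of the multiplication gadgets supplying the $\log_2(4t_i\vee4\alpha_i)$ factor), feasible under the lower bound on $L$ in (i), and identity layers pad the total up to exactly $L$; the width is $\lesssim\max_i N_i^{t_i}\lesssim n\phi_n$, compatible with (ii) since $n\phi_n\ge n\cdot n^{-2\alpha_i^{*}/(2\alpha_i^{*}+t_i)}\asymp N_i^{t_i}$; the number of active weights is $\lesssim\sum_i N_i^{t_i}\log N_i\asymp n\phi_n\log n$, matching (iii); and $F\ge\max\{Q',1\}$ from (iv) dominates $\|f\|_\infty$ together with the clipped intermediate values. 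The main obstacle is precisely this coordination together with the exponent bookkeeping: one must track that only $\alpha_\ell\wedge1$ acts as a modulus-of-continuity exponent (so that the effective indices $\alpha_i^{*}$, rather than the raw $\alpha_i$, govern the rate), clip every intermediate output into the correct domain without spoiling the recursion, and verify that a single triple $(L,\bp,s)$ --- depth logarithmic, width and sparsity of order $n\phi_n$ up to log factors --- realises all component approximations under the hard constraint $\|W_j\|_\infty\vee|\bv_j|_\infty\le1$. The individual ingredients (the $x^{2}$ network, local Taylor approximation, parallel stacking) are by now standard.
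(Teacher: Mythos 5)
The paper itself gives no proof of this theorem: the sentence immediately preceding it states that the bound ``is derived in the proof of Theorem 1 of \cite{NonParametricRegressionReLU} (see also \cite{10.1214/24-AOS2351})'', so the paper simply imports the result as a black box. Your sketch correctly reconstructs the route taken in that cited reference --- Yarotsky-type squaring/multiplication gadgets assembled into local Taylor approximants, parallel stacking of the $t_i$-variate component networks, clipping of intermediate outputs, an error recursion $e_i\lesssim N_i^{-\alpha_i}+e_{i-1}^{\alpha_i\wedge 1}$ whose unrolling produces the effective smoothness indices, and balancing $N_i\asymp n^{1/(2\alpha_i^*+t_i)}$ --- so this is essentially the same approach the paper relies on.

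One loose end worth flagging: your justification of the depth condition (i) is not quite coherent. You attribute the factor $\tfrac{1}{2\alpha_i^*+t_i}$ to $\log_2 N_i\asymp\tfrac{\log_2 n}{2\alpha_i^*+t_i}$, but that would give a depth contribution $\tfrac{\log_2(4t_i\vee 4\alpha_i)}{2\alpha_i^*+t_i}\log_2 n$, not the stated $\tfrac{(\alpha_i+t_i)\log_2(4t_i\vee4\alpha_i)}{2\alpha_i^*+t_i}\log_2 n$; the $\alpha_i+t_i$ numerator arises from the finer bookkeeping in the cited construction (and its correction), which you have not actually derived. Similarly, ``dividing [weights] down into $[-1,1]$ costs only a constant number of extra layers'' is too optimistic as stated --- enforcing the hard $\|W_j\|_\infty\vee|\bv_j|_\infty\le 1$ constraint costs depth that grows with the logarithm of the unconstrained weight magnitudes. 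Neither point undermines the sketch as a reconstruction of the cited argument, but they would need to be tightened to make it a self-contained proof.
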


We now have all the necessary ingredients to prove Theorem \ref{T: Convergence Rates}
\begin{proof}[Proof of Theorem \ref{T: Convergence Rates}]
Apply the general oracle inequality in Theorem \ref{T: Oracle Inequality} with the choice $\delta=n^{-1}$ to the neural network class $\mF(L,\bp,s,F)$ with parameter constraints as in the statement of the theorem. For the approximation error in the oracle inequality, we use Theorem \ref{T: Approx Result}. For the covering entropy, the bound from Lemma \ref{L: Covering Entropy} gives $\log\big(n\vee \mN_{\mF(L,\bp,s,\infty)}(\tfrac 1n)\big)\lesssim (s+1)L\log(n) \asymp n L \phi_n\log^2(n)$. Since $L \gtrsim \log(n),$ we have $(\log(n)/n)^{2\beta/d}\lesssim L(n^{-2\beta/d} \vee n^{-1}).$ As $\phi_n \gg n^{-1},$ $L\phi_n \log^4(n)+(\log(n)/n)^{2\beta/d}\leq L\max(\phi_n \log^4(n),n^{-2\beta/d}).$ Thus, Theorem \ref{T: Oracle Inequality} yields
\begin{align*}
    R(\widehat{f}_n,f_0)\leq & \, C_1\frac{\log^2(n)\log(n\vee \mN_{\mF(L,\bp,s,\infty)}(\tfrac 1n))}{n}+C_2\delta+C_3\left(\frac{\log(n)}{n}\right)^{\frac{2\beta}{d}}+6\Delta_n(\widehat{f}_n,f_0)\\
    &+7\inf_{f\in\mF}\mathbb{E}_{\bX}\left[(f(\bX)-f_0(\bX))^2\right] \\
    \leq &\, C_4 L\max\Big(\phi_n\log^{4}(n), n^{-\frac{2\beta}{d}}\Big)+6\Delta_n(\widehat{f}_n,f_0),
\end{align*}
for a sufficiently large constant $C_4,$ only depending on $q,\bd,\balpha,\bt,F,\beta,K$ and the implicit constants in (ii), (iii), and (iv). This completes the proof.
\end{proof}

\section{Proofs for Section \ref{S: Examples} }\label{S: Other proofs}

\begin{Lemma}\label{L: Holder class of product of different terms}
Let $m$ be a positive integer and $Q>0.$ Then $f:[-Q,Q]^m\rightarrow \mathbb{R}$: $f(\bx):=\prod_{i=1}^mx_i$ is in $\mH^{\gamma}_m([-Q,Q]^m,(Q+1)^m),$ for all $\gamma\geq m+1$.
\end{Lemma}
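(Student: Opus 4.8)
The plan is to compute the H\"older norm of $f(\bx)=\prod_{i=1}^m x_i$ directly from the definition \eqref{Eq: Holder ball} and to verify that it equals exactly $(Q+1)^m$. The key structural observation is that $f$ is multilinear: with all other coordinates held fixed it is an affine function of each $x_i$. Hence, for a multi-index $\bgamma=(\gamma_1,\ldots,\gamma_m)$, the partial derivative $\partial^{\bgamma}f$ vanishes identically as soon as some $\gamma_i\geq 2$, while for $\bgamma\in\{0,1\}^m$ with support $S:=\{i:\gamma_i=1\}$ one has $\partial^{\bgamma}f(\bx)=\prod_{i\notin S}x_i$, so that $\|\partial^{\bgamma}f\|_{\infty}=Q^{m-|S|}$ on $[-Q,Q]^m$, with the convention $Q^0=1$. (Since $f$ extends to all of $\mathbb{R}^m$, these partial derivatives are well defined on the closed cube.)

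Next I would dispose of the H\"older-seminorm term. Because $\gamma\geq m+1$ we have $\floor{\gamma}\geq m+1$, so any multi-index with $|\bgamma|_1=\floor{\gamma}$ has coordinate sum at least $m+1$ spread over only $m$ coordinates; by pigeonhole some $\gamma_i\geq 2$, and therefore $\partial^{\bgamma}f\equiv 0$. Thus the second sum in \eqref{Eq: Holder ball} is zero.

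It then remains to evaluate the first sum. Every multi-index $\bgamma$ with $\partial^{\bgamma}f\not\equiv 0$ lies in $\{0,1\}^m$, hence satisfies $|\bgamma|_1\leq m<m+1\leq\gamma$, so all such terms are captured by $\sum_{0\leq|\bgamma|_1<\gamma}$. Grouping these $\bgamma$ by their support $S\subseteq\{1,\ldots,m\}$ and using the binomial theorem,
\begin{equation*}
\sum_{\bgamma:\,0\leq|\bgamma|_1<\gamma}\|\partial^{\bgamma}f\|_{\infty}=\sum_{S\subseteq\{1,\ldots,m\}}Q^{m-|S|}=\sum_{k=0}^{m}\binom{m}{k}Q^{m-k}=(Q+1)^m.
\end{equation*}
Combining the two parts, the H\"older norm of $f$ equals $(Q+1)^m\leq(Q+1)^m$, so $f\in\mH^{\gamma}_m([-Q,Q]^m,(Q+1)^m)$ for every $\gamma\geq m+1$. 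There is no genuine obstacle here; the only points that need care are the identical vanishing of the high-order derivatives (handled by multilinearity together with the pigeonhole argument above) and the bookkeeping in the binomial sum.
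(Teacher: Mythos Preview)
Your argument follows the paper's proof almost verbatim: compute $\partial^{\bgamma}f$ explicitly using multilinearity, sum the sup-norms via the binomial theorem to get $(Q+1)^m$, and show the H\"older seminorm vanishes. There is, however, one small slip in your seminorm step. In the paper's notation (see Section~1.1), $\floor{x}$ denotes the largest integer \emph{strictly less than} $x$, so when $\gamma=m+1$ one has $\floor{\gamma}=m$, not $m+1$. Hence your pigeonhole claim ``any multi-index with $|\bgamma|_1=\floor{\gamma}$ has some $\gamma_i\geq 2$'' fails at this endpoint: the all-ones multi-index $\bgamma=(1,\ldots,1)$ has $|\bgamma|_1=m$ with every coordinate equal to~$1$. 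The fix is trivial and is exactly what the paper does: for this particular $\bgamma$ one has $\partial^{\bgamma}f\equiv 1$, a constant, so the difference quotient is identically zero and the seminorm still vanishes. With this one-line addition your proof is complete and matches the paper's.
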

\begin{proof}
    Observe that $|\partial^0f(\bx)|=|f(\bx)|\leq Q^m$, $\partial_{x_j}f(\bx)=\prod_{i=1,i\neq j}^m x_i$ and $\partial_{x_j}\partial_{x_j}f=0,$ for $i=1,\ldots,m.$ This means that for all $\balpha\in\mathbb{Z}^m_{\geq 0}$ it holds that $\partial^{\balpha}f=0$ if $\alpha_j\geq 2$ for some $j\in\{1,\ldots,m\}$. Rephrased, $\partial^{\balpha}f\neq 0$ if and only if $\balpha\in\{0,1\}^m$. Furthermore for $\balpha\in\{0,1\}^m,$ $|\partial^{\balpha}f(\bx)|=|\prod_{i:\alpha_i=0}x_i|\leq Q^{m-|\balpha|_0},$ where $|\cdot|_0$ denotes the counting norm. There are $\binom{m}{m-|\balpha|_0}$ ways to distribute $m-|\balpha|_0$ zeros over a vector of length $m$. Hence for $\gamma\geq m+1,$ we get by the binomial theorem
    \begin{equation*}
        \sum_{\balpha:|\balpha|_1<\gamma}\|\partial^{\balpha}f\|_{\infty}\leq\sum_{k=0}^m\binom{m}{k}Q^{k}=(Q+1)^m.
    \end{equation*}
    If $|\balpha|_1>m,$ then there exists at least one $j$ such that $\alpha_j\geq 2$ implying that $\partial^{\balpha}f=0$ in this case. In the case that $|\balpha|_1=m$, then either there exists a $j$ such that $\alpha_j\geq 2$, so $\partial^{\balpha}f=0$, or $\balpha$ is the vector with only ones, in which case $\partial^{\balpha}f=1$. Hence, $\gamma\geq m+1$ yields
    \begin{equation*}
        \sum_{\balpha:|\balpha|_1=\floor{\gamma}}\ \sup_{\bx,\by\in[-Q,Q]^m, \,  \bx\neq\by} \ \frac{|\partial^{\balpha}f(\bx)-\partial^{\balpha}f(\by)|}{|\bx-\by|_{\infty}^{\gamma-\floor{\gamma}}}=0.
    \end{equation*}
    Together with the definition of the H\"{o}lder ball in \eqref{Eq: Holder ball}, the statement follows.
\end{proof}

\begin{proof}[Proof of Lemma \ref{L: General potential product2}] 
    The function $g_0=(g_{0,1},\ldots,g_{0,|\mR|})$ is given by $g_{0,I}=\psi_I$ for all $I\in \mR$. From $\psi_I\in \mH_{r_I}^{\gamma}([0,1]^{r_I},Q)$ and $|I|\leq r$ it follows that $t_0=r$ and $\alpha_0=\gamma.$
    The function $g_1(u_1,\ldots,u_{|\mR|})=\prod_{I\in \mR}u_I$ is the product of $|\mR|$ different factors in $[-Q,Q].$ Applying Lemma \ref{L: Holder class of product of different terms} yields $g_1\in \mH^{\zeta}_{|\mR|}([-Q,Q]^{|\mR|},(Q+1)^{|\mR|})$ for all $\zeta\geq |\mR|+1.$ So $t_1=|\mR|$ and $\alpha_1$ is arbitrarily large.
    \end{proof}

\begin{proof}[Proof of Lemma \ref{L: Independent variables smoothnes bound}]
    Since $f$ is $\alpha$-H\"older smooth, there exists a constant $Q$ such that $f\in \mH^{\alpha}_d([0,1]^d,Q).$ Thus for any $k=0,1,\ldots,\floor{\alpha},$
    \begin{equation}\label{Eq: Chain rule for independent variables}
        \frac{\partial^{k}}{\partial x_j^k}f(\bx)=\bigg(\prod_{i\neq j}f_i(x_i)\bigg)f_j^{(k)}(x_j).
    \end{equation}
    Since $\prod_{i\neq j}f_i$ is a density on $[0,1]^{d-1},$ it is nonnegative and $C:=\prod_{i\neq j}\|f_i\|_\infty>0,$ with $\|\cdot\|_\infty$ the supremum norm on $[0,1]^d.$ Since $f_j$ only depends on $x_j$ and $f$ is $\alpha$-H\"{o}lder smooth, for any $k=0,1,\ldots, \floor{\alpha},$
    \begin{equation}\label{Eq: Infinity norm bound for independent variables}
        \frac{Q}{C}\geq \frac{1}{C}\sup_{(x_1,\ldots,x_d) \in [0,1]^d}\bigg |\bigg(\prod_{i\neq j}f_i(x_i)\bigg)f_j^{(k)}(x_j)\bigg |\geq \frac{\prod_{i\neq j}\|f_i\|_\infty}{C} \left\|f_j^{(k)}\right\|_{\infty}= \left\|f_j^{(k)}\right\|_{\infty}.
    \end{equation}
    Similarly, by the $\alpha$-H\"{o}lder smoothness of $f$ and \eqref{Eq: Chain rule for independent variables},
    \begin{align}\label{Eq: Holder seminorm bound for independent variables}
    \begin{split}
        \frac{Q}{C}&\geq \frac{1}{C} \ \sup_{\bx,\by\in[0,1]^d,\bx\neq\by} \frac{\left|\frac{\partial^{\floor{\alpha}}}{\partial x_j^{\floor{\alpha}}}f(\bx)-\frac{\partial^{\floor{\alpha}}}{\partial x_j^{\floor{\alpha}}}f(\by)\right|}{|\bx-\by|_{\infty}^{\alpha-\floor{\alpha}}} \\
        &\geq \frac{\prod_{i\neq j}\|f_i\|_\infty}{C}  \sup_{x,y\in[0,1],x\neq y}\frac{\left|f_j^{(\floor{\alpha})}(x)-f_j^{\floor{\alpha})}(y)\right|}{|x-y|^{\alpha-\floor{\alpha}}}.
    \end{split}
    \end{align}
    From \eqref{Eq: Infinity norm bound for independent variables} and \eqref{Eq: Holder seminorm bound for independent variables} it follows that $f_j\in \mH^{\alpha}_1([0,1],(\floor{\alpha}+1)Q/C).$
\end{proof}

\begin{proof}[Proof of Lemma \ref{L: general d dimensional Copula}]
The function $g_0=(g_{0,1},\ldots,g_{0,2d})$ is given by $g_{0,i}(x_i)=f_i(x_i)$ for $i=1,\ldots,d$ and $g_{0,i}(x_{i-d})=F_{i-d}(x_{i-d})$ for $i=d+1,\ldots,2d.$ Each of these functions is univariate, so $t_0=1.$ Since $F_{i-d}$ is the c.d.f.\ of $f_{i-d},$ it holds that $F_{i-d}\in \mH_{1}^{\gamma_i+1}([0,1], Q_i+1)$. Thus the function $g_{0,i}$ with the smallest H\"{o}lder smoothness has to correspond to one of the functions $f_i$ and $\alpha_0=\min_{i=1,\ldots,d}\gamma_i$.
The function $g_1=(g_{1,1},\ldots,g_{1,d+1})$ satisfies $g_{1,i}(y_i)=y_i$ (the identity function) for $i=1,\ldots, d$ and $g_{1,d+1}(\bv)=c(v_1,\ldots,v_d)$, so $t_1=d$. For $i=1,\ldots,d$ the domain of $g_{1,i}$ is $[0,\|f_i\|_{\infty}]\subseteq [0,Q_i]$, so $g_{1,i}\in \mH^{\gamma}_1([0,Q_i], Q_i+1),$ for all $\gamma\geq 2.$ This means the H\"{o}lder smoothness of $g_{1,i}$ can be taken arbitrarily large, that consequently $g_{1,d+1}$, corresponding to the copula $c$, has the smallest H\"{o}lder smoothness among the component functions of $g_1,$ and thus $\alpha_1=\gamma_c$. Set $Q:=Q_c\vee(\max_{i=1,\ldots,d}Q_i)$, then
$g_2(u,y_1,\ldots,y_d)=u\prod_{i=1}^dy_i$ is the product of $d+1$ different factors in $[-Q,Q]^{d+1}$. Applying Lemma \ref{L: Holder class of product of different terms} yields $g_2\in \mH^{\gamma}_{d+1}([-Q,Q]^{d+1}, (Q+1)^{d+1})$ for all $\gamma\geq d+2.$ So $t_2=d+1$ and the smoothness index $\alpha_2$ can be taken to be arbitrarily large.\end{proof}

\begin{proof}[Proof of Lemma \ref{L: Pair Copula composition}]
    The function $g_0=(g_{0,1},\ldots,g_{0,2d})$ is given by $g_{0,i}(x_i)=f_i(x_i)$ for $i=1,\ldots,d$ and $g_{0,i}(x_{i-d})=F_{i-d}(x_{i-d})$ for $i=d+1,\ldots,2d$. Recall that $f_i\in \mH_{1}^{\gamma}([0,1], Q),$ for all $i=1,\ldots,d.$
    Since $F_{i-d}$ is the c.d.f.\ of $f_{i-d},$ it holds that $F_{i-d}\in \mH_{1}^{\gamma+1}([0,1], Q+1)$. So $t_0=1$ and $\alpha_0=\gamma$. 
    
    The function $g_1=(g_{1,1},\ldots,g_{1,d+(d-1)})$ satisfies $g_{1,i}(u_i)=u_i$ (the identity function) for $i=1,\ldots,d$. For $f$ of the form \eqref{Eq: Pair Copula decomposition canonical vine} it holds that $g_{1,i}(v_1,v_2)=c_{1,i+1-d}(v_1,v_2)$ for $i=d+1,\ldots,d+(d-1)$ and for $f$ of the form \eqref{Eq: Pair Copula decomposition d vine} we have that $g_{1,i}(v_1,v_2)=c_{i-d,i+1-d}(v_1,v_2)$ for $i=d+1,\ldots,d+(d-1)$. Due to $[0,\max_{i=1,\ldots,d}\|f_i\|_{\infty}]\subseteq [0,Q],$ we can define $g_{1,i}$ on $[0,Q].$ Since $g_{1,i}$ is in this case the identity, treating the cases $0<\beta\leq 1$ and $\beta>1$ separately, we find $g_{1,i}\in \mH^{\beta}_1([0,Q], 2Q+1),$ for all $\beta>0.$ By definition all bivariate copulas are in $\mH_2^{\gamma_c}([0,1]^2,Q).$ This means that  $t_1=2$ and $\alpha_1=\gamma_c.$

    The function $g_2(u_1,\ldots,u_d,y_1,\ldots,y_{d-1})=\prod_{k=1}^du_k\prod_{j=1}^{d-1}y_j$ is the product of $2d-1$ factors and can be defined on $[-Q,Q]^{2d-1}$. Thus by Lemma \ref{L: Holder class of product of different terms} it  holds that $g_2\in \mH_{2d-1}^{\zeta}([-Q,Q]^{2d-1}, (Q+1)^{2d-1})$ for all $\zeta\geq 2d$, so $t_2=2d-1$ and $\alpha_2$ is arbitrarily large.
\end{proof}

\subsection{Proof of Theorem \ref{T: Mixture convergence rates}}

We work in the density estimation model as defined in Section \ref{S:Model} with mixture density $f_0=\sum_{j=1}^ra_jf_j$, where $a_1,\ldots a_r$ are non-negative mixture weights summing up to one, and $f_j\in \mathcal{C}_d^{\beta_j}([0,1]^d,Q)\cap\mG(q_j,\bd_j,\bt_j,\balpha_j,Q),$ for all $j=1,\ldots,r.$  Recall that $\alpha_{i,j}^{*}:=\alpha_{i,j}\prod_{\ell=i+1}^q(\alpha_{\ell,j}\wedge 1),$ and $\phi_n^{\star}:=\max_{j=1,\ldots,r} \phi_{n,j}$, where $$\phi_{n,j} := \max_{i=0,\ldots,q} n^{-\frac{2\alpha^*_{i,j}}{2\alpha^*_{i,j}+t_{i,j}}}.$$

\begin{Lemma}[Approximation of mixtures]\label{L: Approximation of mixtures} Whenever
    \begin{compactitem}
        \item[(i)] $\max_{j=1,\ldots,r}\sum_{i=1}^{q_j} \tfrac{\alpha_{i,j}+t_{i,j}}{2\alpha_{i,j}^*+t_{i,j}} \log_2(4t_{i,j}\vee 4\alpha_{i,j})\log_2(n)\leq L\lesssim n\phi_n^{\star},$
        \item[(ii)] $ n\phi_n^{\star}\lesssim \min_{i=1,\hdots,L}p_i,$
        \item[(iii)] $s\asymp  n\phi_n^{\star}\log(n),$
        \item[(iv)] $F\geq \max\{Q,1\},$
    \end{compactitem} then,
   for $n$ large enough, there exists a network $H\in \mF(L,\bp,s,F)$
and a constant $C_9$ only depending on $(q_j,\bd_j,\bt_j,\balpha_j)_{j=1}^r,$ $ r, F$ and the implicit constants in (i), (ii) and (iii) such that
    \begin{equation*}
        \bigg\|\sum_{j=1}^ra_jf_j-H\bigg\|_{\infty}^2\leq C_9\phi_n^{\star}.
    \end{equation*}
    
\end{Lemma}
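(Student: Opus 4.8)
The plan is to approximate each mixture component $f_j$ individually by a ReLU network via Theorem~\ref{T: Approx Result}, and then to assemble the $r$ component networks into a single network of the prescribed architecture that computes the convex combination $\sum_{j=1}^r a_j H_j$. The point is that forming a convex combination cannot increase the sup-norm approximation error, because $a_j\geq 0$ and $\sum_{j=1}^r a_j=1$; all the real work is in checking that the assembled network still lies in $\mF(L,\bp,s,F)$.

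For a fixed $j$, apply Theorem~\ref{T: Approx Result} to $f_j\in\mG(q_j,\bd_j,\bt_j,\balpha_j,Q)$, with the rate $\phi_{n,j}$ in place of $\phi_n$ and with the smallest admissible depth $\ell_j:=\ceil{\sum_{i=1}^{q_j}\tfrac{\alpha_{i,j}+t_{i,j}}{2\alpha_{i,j}^*+t_{i,j}}\log_2(4t_{i,j}\vee 4\alpha_{i,j})\log_2(n)}$. Since $\ell_j$ grows only logarithmically in $n$ whereas $n\phi_{n,j}$ grows polynomially, for $n$ large enough conditions (i)--(iii) of Theorem~\ref{T: Approx Result} can be met with depth $\ell_j\leq L$ (using hypothesis~(i) of the lemma, and that $L$ is an integer), layer widths $\lesssim n\phi_{n,j}\leq n\phi_n^{\star}$, and sparsity $\asymp n\phi_{n,j}\log n\lesssim n\phi_n^{\star}\log n$; condition~(iv) holds because $F\geq\max\{Q,1\}$. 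This produces $H_j\in\mF(\ell_j,\bp^{(j)},s_j,F)$ with $\|f_j-H_j\|_\infty^2\leq C_8^{(j)}\phi_{n,j}\leq C_8^{(j)}\phi_n^{\star}$.

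Next I would synchronize depths and parallelize. Pad each $H_j$ from depth $\ell_j$ up to depth $L$ by first duplicating its scalar output $y_j$ into the nonnegative pair $(\sigma(y_j),\sigma(-y_j))$ and then appending $L-\ell_j-1$ identity layers acting on these two coordinates; this uses only weights in $\{-1,0,1\}$ and adds $O(n\phi_n^{\star}+L)=O(n\phi_n^{\star})$ parameters per component. Stacking the $r$ padded networks side by side (they all read the same input $\bx$) gives a depth-$L$ network whose layer widths are at most $\sum_{j}p_i^{(j)}\lesssim r\,n\phi_n^{\star}$, and a final linear layer with entries $\pm a_j$ (each of absolute value $\leq a_j\leq 1$) outputs $\sum_{j=1}^r a_j H_j(\bx)$. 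The total number of active parameters is $\sum_j s_j+O(rL)\lesssim r\,n\phi_n^{\star}\log n$, and $\|\sum_j a_j H_j\|_\infty\leq\sum_j a_j\|H_j\|_\infty\leq F$. Hence, provided the implicit constants in conditions~(ii) and~(iii) are taken large enough relative to $r$ (which is allowed, as the assertion and $C_9$ may depend on these constants and on $r$), the network $H:=\sum_{j=1}^r a_j H_j$ belongs to $\mF(L,\bp,s,F)$.

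Finally, by the triangle inequality and $\sum_j a_j=1$,
\[
\Big\|\sum_{j=1}^r a_j f_j-H\Big\|_\infty=\Big\|\sum_{j=1}^r a_j\,(f_j-H_j)\Big\|_\infty\leq\sum_{j=1}^r a_j\|f_j-H_j\|_\infty\leq\max_{j=1,\ldots,r}\|f_j-H_j\|_\infty,
\]
so that $\|\sum_{j=1}^r a_j f_j-H\|_\infty^2\leq\big(\max_{j}C_8^{(j)}\big)\phi_n^{\star}=:C_9\phi_n^{\star}$, with $C_9$ depending only on $r$, $(q_j,\bd_j,\bt_j,\balpha_j)_{j=1}^r$, $F$ and the implicit constants. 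I expect the substantive difficulty to be not analytic but combinatorial bookkeeping: the component networks have genuinely different natural depths (because the effective rates $\phi_{n,j}$ differ, so some $\ell_j$ are forced strictly below $L$), and one must verify that depth-synchronization together with parallelization can be carried out within the fixed budget $(L,\bp,s)$ and without violating the weight-bound-one constraint — this is exactly what the $(\sigma(y),\sigma(-y))$ padding and the fact that $r$ is a fixed constant are there to handle.
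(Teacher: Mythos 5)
Your proposal is correct and follows essentially the same route as the paper's proof: apply Theorem~\ref{T: Approx Result} component-wise, synchronize depths with skip-connection-style identity padding, parallelize, scale by $a_j$, and conclude via the triangle inequality with $\sum_j a_j=1$. One small caution on phrasing: the implicit constants in the lemma's conditions~(ii)--(iii) are given, not up to you to enlarge; what you actually do (and what the paper makes explicit by setting $p_{i,j}=\lfloor p_i^\star/r\rfloor$ and $s_j=\lfloor s^\star\phi_{n,j}/(2r\phi_n^\star)\rfloor$) is allocate a $1/r$-fraction of the given budget to each component and choose the implicit constants in the per-component applications of Theorem~\ref{T: Approx Result} accordingly, which then feed into $C_9$.
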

\begin{proof}
For positive constants $c_L, c_p,c_{s\ell},c_{su},$ let $L^\star$, $\bp^\star$ and $s^\star$ be such that
        \begin{compactitem}
        \item[(i')] $\max_{j=1,\ldots,r}\sum_{i=1}^{q_j}\tfrac{\alpha_{i,j}+t_{i,j}}{2\alpha_{i,j}^*+t_{i,j}}\log_2(4t_{i,j}\vee 4\alpha_{i,j})\log_2(n)\leq L^\star\leq c_L n\phi_n^{\star}$
        \item[(ii')] $ n\phi_n^{\star}\leq c_p \min_{i=1,\hdots,L}p_i^\star$
        \item[(iii')] $c_{s\ell} n\phi_n^{\star}\log(n)\leq  s^\star\leq c_{su} n\phi_n^{\star}\log(n).$
    \end{compactitem}
    For $n$ large enough, we have
    \begin{compactitem}
        \item[(I)] $c_L n\phi_n^{\star}\leq (c_{s\ell}/(2r)) n\phi_n^{\star}\log(n)$,
        \item[(II)]  $n\phi_n^{\star}>rc_p$,
        \item[(III)]$\floor{c_Ln\phi_{n,j}}\geq \sum_{i=1}^{q_j}\tfrac{\alpha_{i,j}+t_{i,j}}{2\alpha_{i,j}^*+t_{i,j}}\log_2(4t_{i,j}\vee 4\alpha_{i,j})\log_2(n)$, \ for all $j=1,\ldots,r,$
        \item[(IV)]$(c_{sl}/(4r))n\phi_{n,j}\log(n)\geq 1,$  \ for all $j=1,\ldots,r.$
    \end{compactitem}
    
    For $j=1,\ldots,r$ define $L_j:=\min\{L^\star,\floor{c_Ln\phi_{n,j}}\}$, $p_{i,j}=\floor{p_i^\star/r}$ and $s_j=\floor{s^\star\phi_{n,j}/(2r\phi_n^\star)}$.
    Recall that $\phi_n^\star=\max_{j=1,\ldots,r}\phi_{n,j}$. Using the definition of $L_j$ and (III) yields
\begin{equation*}
    \sum_{i=1}^{q_j}\frac{\alpha_{i,j}+t_{i,j}}{2\alpha_{i,j}^*+t_{i,j}}\log_2(4t_{i,j}\vee 4\alpha_{i,j})\log_2(n)\leq L_j \leq c_L n\phi_{n,j}.
\end{equation*}
Using (ii'), (II), and the definitions of $\phi_n^*$ and $\bp_j$ we get that 
\begin{equation*}
    n\phi_{n,j}\leq 2c_pr\min_{i=1,\hdots,L}\floor{p_{i}^*/r}=2c_pr\min_{i=1,\hdots,L}p_{i,j}.
\end{equation*}
From (IV), the definition $s_j=\floor{s^\star\phi_{n,j}/(2r\phi_n^\star)}$, (iii'), and $\floor{u}\geq u-1$ for all $u\in\mathbb{R}$, it follows that 
\begin{equation*}
    \frac{c_{sl}}{4r} n\phi_{n,j}\log(n)\leq \frac{c_{sl}}{2r} n\phi_{n,j}\log(n) -1 \leq s_j \leq \frac{c_{su}}{2r} n\phi_{n,j}\log(n).
\end{equation*}
This means that for $j=1,\ldots,r,$ the class $\mF(L_j,\bp_j,s_j,F)$ and the function $f_j\in\mathcal{C}_d^{\beta}([0,1]^d,Q)\cap\mG(q_j,\bd_j,\bt_j,\balpha_j,Q)$ satisfy the conditions of Theorem \ref{T: Approx Result}. Applying Theorem \ref{T: Approx Result} gives us that for each $j=1\ldots,r$ there exist a network $H_j\in \mF(L_j,\bp_j,s_j,F)$
    such that $\|f_j-H_j\|_{\infty}^2\leq C_{8,j}\phi_{n,j}$.
    Since $a_j$ is in $[0,1]$, multiplying the last weight matrix of $H_j$ with $a_j$ yields a network $a_jH_j$ in the same network class as $H_j$ such that $\|a_jf_j-a_jH_j\|_{\infty}^2\leq C_{8,j}\phi_{n,j}.$
    
    Whenever $L_j<L^\star,$ we can synchronize the depth by adding additional layers with identity weight matrix such that
    \begin{equation*}
        \mF_j(L_j,\bp_j,s_j,F)\subset \widetilde{\mF}_{j}(L^*,(\bp_j,\underbrace{1,\ldots,1}_{(L^\star-L_j)\text{ times}}),s_j+(L^\star-L_j),F).
    \end{equation*}
    For ease of notation define $\widetilde{\bp}_j=(\bp_j,1,\dots,1).$
    Placing all these networks in parallel yields a network
    \begin{equation*}
        H\in\mF\Big(L^\star,\sum_{j=1}^r\widetilde{\bp}_j,\sum_{j=1}^r\big(s_j+(L^\star-L_j)\big),F\Big),
    \end{equation*}
   such that
    \begin{equation*}
        \left\|\sum_{j=1}^ra_jf_j-H\right\|_{\infty}^2\leq \left(\sum_{j=1}^r\|a_jf_j-a_jH_j\|_{\infty}\right)^2\leq \left(\sum_{j=1}^r\sqrt{C_{8,j}\phi_{n,j}}\right)^2 \leq r^2\max_{j=1,\ldots,r}C_{8,j}\phi_{n,j}.
    \end{equation*}
    A network with width $\bp$ and sparsity $s$ can always be embedded in a larger network of the same depth with width $\widetilde{\bp}\geq \bp$ (inequalities between vectors should always be understood as componentwise inequalities) and network sparsity $\widetilde{s}\geq s.$ Thus it remains to show that $\sum_{j=1}^r\widetilde{\bp}_j\leq \bp^\star$ and $\sum_{j=1}^r\big(s_j+(L^\star-L_j)\big)\leq s^\star.$
    First consider the width. Using the definitions of $p_{i,j}$ and $\widetilde{\bp}_j,$ we get for $i=1,\ldots,L^\star$ that $\sum_{j=1}^r\widetilde{p}_{i,j}\leq r\max_{j=1,\ldots,r}\widetilde{p}_{i,j}\leq r\max\{p_i^\star/r,1\}.$ From (II) and (ii'), we get that $p_i^\star/r>1.$ Hence, $\sum_{j=1}^r\widetilde{\bp}_j\leq \bp^\star$.
    Now consider the sparsity. By the definition of $s_j$ it holds that $s_j\leq s^\star/(2r).$ From (i') and (I), we get that $L^\star\leq s^\star/(2r)$. Hence,
    $\sum_{j=1}^r\big(s_j+(L^\star-L_j)\big)\leq \sum_{j=1}^r\big(s_j+L^\star\big)\leq s^\star.$
\end{proof}

\begin{proof}[Proof of Theorem \ref{T: Mixture convergence rates}]
        The derivative of a sum is the sum of the derivatives. Furthermore $(a_1,\ldots,a_r)$ are non-negative mixture weights that sum op to one. Since $f_j\in \mathcal{C}_d^{\beta}([0,1]^d,Q)$ for $j=1,\ldots,r$, this means that also $f_0\in \mathcal{C}_d^{\beta}([0,1]^d,Q).$
        The statement of the theorem now follows from taking $\delta=1/n$ and the network class $\mF(L,\bp,s,F)$ as the function class in Theorem \ref{T: Oracle Inequality}. For the approximation error in the oracle inequality, we use Lemma \ref{L: Approximation of mixtures} and for the covering entropy the bound from Lemma \ref{L: Covering Entropy}. This yields the result.
\end{proof}

\section{Proofs for Section \ref{S: Simulations}}

\begin{proof}[Proof of Lemma \ref{L: Structure mixing conditional}] To represent $f_j(x_j|x_i) = x_i h_j(x_j)+(1-x_i)h_j(1-x_j)$ as a composition $g_1\circ g_0,$ choose $g_0(x_i,x_j)=(x_i,h_j(x_j),h_j(1-x_j)).$ Clearly $t_0=1.$ Since $[0,1] \ni x_i \mapsto x_i$ lies in $\mH^{\gamma}_1([0,1],2),$ for all $\gamma >0,$ we get that $\alpha_0=\gamma_j.$ The function $g_1$ is given by $g_1(x_i,y_1,y_2)=x_iy_1+(1-x_i)y_2,$ so $t_1=3.$ The partial derivatives are $\partial_{x_i}g_1=y_1-y_2$, $\partial_{y_1}g_1=x_i$, $\partial_{y_2}g_1=1-x_i$, $\partial_{x_i}\partial_{y_1}g_1=1$ and $\partial_{x_i}\partial_{y_2}g_1=-1.$ All other partial derivatives of $g_1$ vanish. Thus $g_1\in \mH^{\gamma}_3([0,1]\times[-Q,Q]^2,4(Q+1)),$ for all $\gamma > 3,$ so $\alpha_1$ is arbitrarily large.
\end{proof}

\begin{proof}[Proof of Lemma \ref{L: Structure shifting conditional}]
To represent $f_j(x_j|x_i) = h_j\big(\max\{x_j-x_i/4,0\}\big)$ as a composition $g_1\circ g_0,$ choose $g_0(x_j,x_i)=\max\{0,x_j-x_i/4\}.$ The derivative of this function is discontinuous along the line $x_j-x_i/4=0.$ Observe that $|\max(0,a)-\max(0,a+b)|\leq |b|$, for all real numbers $a,b$. Hence
    \begin{equation*}
        \frac{|g_0(x_j,x_i)-g_0(x_j+u,x_i+v)|}{\max(|u|,|v|)}\leq \frac{|u/4-v|}{\max(|u|,|v|)}\leq \frac{5}{4}.
    \end{equation*}
    Thus $g_0\in \mH^{1}_2([0,1]^2,9/4),$ so $\alpha_0=1.$ The function $g_1$ is given by $g_1(y)=h_j(y),$ thus $t_1=1$ and $\alpha_1=\gamma_j.$
\end{proof}

\section{Proofs for Section \ref{S: Proofs} }\label{S: Technical Proofs}

\begin{proof}[Proof of Lemma \ref{L: Bound on the noise-dependend term}] 

The random variable $\epsilon_i=\KDE(\bX_i)-f_0(\bX_i)$ is not centered. The first step adds and subtracts $\mathbb{E}_{f_0}[\epsilon_i\rvert \bX_i]$ to get the centered random variable $\epsilon_i-\mathbb{E}_{f_0}[\epsilon_i\rvert \bX_i]$ instead. Together with the triangle inequality, this gives
\begin{equation}\label{Eq:Centering the noise}
    \begin{aligned}
    &\left|\mathbb{E}_{f_0}\left[\frac{2}{n}\sum_{i=1}^n\epsilon_i(\widehat{f}(\bX_i)-f(\bX_i))\right]\right|\\
    &\leq \left|\mathbb{E}_{f_0}\left[\frac{2}{n}\sum_{i=1}^n(\epsilon_i-\mathbb{E}_{f_0}[\epsilon_i\rvert \bX_i])(\widehat{f}(\bX_i)-f_0(\bX_i))\right] \right|\\
    &+\left|\mathbb{E}_{f_0}\left[\frac{2}{n}\sum_{i=1}^n(\epsilon_i-\mathbb{E}_{f_0}[\epsilon_i\rvert \bX_i])(f_0(\bX_i)-f(\bX_i))\right]\right|\\
    &+\left|\mathbb{E}_{f_0}\left[\frac{2}{n}\sum_{i=1}^n\mathbb{E}_{f_0}[\epsilon_i\rvert\bX_i](\widehat{f}(\bX_i)-f(\bX_i))\right]\right|\\
    &=:(I)+(II)+(III).
    \end{aligned}
\end{equation}    
By the tower rule, we can in $(II)$ first condition the expectation on $\bX_i$. Now $(II)=0$ follows from
\begin{equation*}
\begin{aligned}
    \mathbb{E}_{f_0}\Big[\left(\epsilon_i-\mathbb{E}_{f_0}\left[\epsilon_i\big\rvert \bX_i\right]\right)\left(f_0(\bX_i)-f(\bX_i)\right)\Big\rvert \bX_i\Big]
    &=\big(\mathbb{E}_{f_0}\left[\epsilon_i\rvert \bX_i\right]-\mathbb{E}_{f_0}\left[\epsilon_i\rvert \bX_i\right]\big)\big(f_0(\bX_i)-f(\bX_i)\big) \\
    &=0.
    \end{aligned}
\end{equation*}
For real numbers $a_i,b_i$, we have $(|a_i|-|b_i|/2)^2\geq 0$ and therefore $|a_ib_i|\leq a_i^2+b_i^2/4$ as well as $\sum_{i}|a_ib_i|\leq \sum_i a_i^2+\frac{1}{4}\sum_i b_i^2.$ Bringing first the absolute value inside the expectation and applying this inequality twice, once to the sequences $(2\mathbb{E}_{f_0}[\epsilon_i\rvert\bX_i]/\sqrt{n})_i$ and $((\widehat{f}(\bX_i)-f_0(\bX_i))/\sqrt{n})_i$ and once to the sequences $(2\mathbb{E}_{f_0}[\epsilon_i\rvert\bX_i]/\sqrt{n})_i$ and $((f_0(\bX_i)-f(\bX_i))/\sqrt{n})_i$ yields
\begin{equation*}
    \begin{aligned}
    &\left|\mathbb{E}_{f_0}\left[\frac{2}{n}\sum_{i=1}^n\mathbb{E}_{f_0}[\epsilon_i\rvert\bX_i](\widehat{f}(\bX_i)-f(\bX_i))\right]\right|\\
    &\overset{(i)}{=}\left|\mathbb{E}_{f_0}\left[\sum_{i=1}^n\frac{2\mathbb{E}_{f_0}[\epsilon_i\rvert\bX_i]}{\sqrt{n}}\frac{(\widehat{f}(\bX_i)-f_0(\bX_i))}{\sqrt{n}}\right]+\mathbb{E}_{f_0}\left[\sum_{i=1}^n\frac{2\mathbb{E}_{f_0}[\epsilon_i\rvert\bX_i]}{\sqrt{n}}\frac{(f_0(\bX_i)-f(\bX_i))}{\sqrt{n}}\right]\right|\\
    &\leq 8\mathbb{E}_{f_0}\left[\frac{1}{n}\sum_{i=1}^n\left(\mathbb{E}_{f_0}[\epsilon_i\rvert\bX_i]\right)^2\right]+\frac{1}{4}\mathbb{E}_{f_0}\left[\frac{1}{n}\sum_{i=1}^n\left(f_0(\bX_i)-f(\bX_i)\right)^2\right]\\
    &+\frac{1}{4}\mathbb{E}_{f_0}\left[\frac{1}{n}\sum_{i=1}^n\left(\widehat{f}(\bX_i)-f_0(\bX_i)\right)^2\right]\\
    &\overset{(ii)}{=} 8\mathbb{E}_{f_0}\left[\left(\mathbb{E}_{f_0}[\epsilon_1\rvert\bX_1]\right)^2\right]+\frac{\mathbb{E}_{\bX}[(f_0(\bX)-f(\bX))^2]}{4}+\frac{\widehat{R}_n(\widehat{f},f_0)}{4},\\
    \end{aligned}
\end{equation*}
where for $(i)$ we added and subtracted the same term and $(ii)$ follows from the definition of $\widehat{R}_n(\widehat{f},f_0)$ and the fact that the $\bX_i$ are i.i.d.,
Proposition \ref{P:Bound on conditional noise Kernel version} gives $\mathbb{E}_{f_0}[(\mathbb{E}_{f_0}[\epsilon_1\rvert\bX_1])^2]\leq h_n^{2\beta}F^2d^{2\beta} \|K\|_\infty^{2d}$ and so
\begin{equation}\label{Eq: Bound on term (III)}
    \begin{aligned}
    (III)\leq 8h_n^{2\beta}F^2d^{2\beta} \|K\|_\infty^{2d} +\frac{\mathbb{E}_{\bX}[(f_0(\bX)-f(\bX))^2]}{4}+\frac{\widehat{R}_n(\widehat{f},f_0)}{4}.
    \end{aligned}
\end{equation}
It remains to bound
\begin{align*}
    (I)= \left|\mathbb{E}_{f_0}\left[\frac{2}{n}\sum_{i=1}^n(\epsilon_i-\mathbb{E}_{f_0}[\epsilon_i\rvert \bX_i])(\widehat{f}(\bX_i)-f_0(\bX_i))\right] \right|
\end{align*}
in \eqref{Eq:Centering the noise}. Let us briefly outline the main ideas. A standard strategy to do this is to use that $\wh f\in \mF$ and bound 
\begin{align*}
    (I)\leq 
    \mathbb{E}_{f_0}\left[\frac{2}{n} \sup_{f\in \mF}\left|\sum_{i=1}^n(\epsilon_i-\mathbb{E}_{f_0}[\epsilon_i\rvert \bX_i])(f(\bX_i)-f_0(\bX_i))\right|\right].
\end{align*}
The remaining step is then to get the supremum $\sup_{f\in \mF}$ out of the expectation. This is the central problem in empirical process theory. Standard empirical process techniques consider a covering of $\mF.$ On each of the balls in the covering the expectation does not change much, such that one can replace the supremum by a maximum over the centers of the covering balls plus some remainder terms. To control the expectation of the maximum over the centers of the balls from the covering, one can now apply the union bound together with concentration bounds. While we will follow these steps, there are various technical challenges that occur because of the dependence in the data. 

The covering number of $\mF$ with supremum norm balls of radius $\delta>0$ has been called $\mN_{\mF}(\delta).$ If $\mN_{\mF}(\delta)<n,$ then one can add some balls with centers in $\mF$ to the covering, to obtain a (not necessarily optimal) covering with \[N=n\vee \mN_{\mF}(\delta)\] balls. By assumption, the $N$ centers $f_1,\ldots,f_N$ lie in $\mF.$ Choose $k^*\in\{1,\ldots,N\}$ such that
\begin{equation*}
    \|\widehat{f}-f_{k^*}\|_\infty=\min_{1\leq\ell\leq N}\|\widehat{f}-f_{\ell}\|_\infty.
\end{equation*} 
In particular, $k^*$ is random. Define $(IV):= |\mathbb{E}_{f_0}[\frac{2}{n}\sum_{i=1}^n(\epsilon_i-\mathbb{E}_{f_0}[\epsilon_i\rvert \bX_i])(f_{k^*}(\bX_i)-f_0(\bX_i))]|.$ This gives us that
\begin{equation}
    \begin{aligned}\label{Eq: Moving to the cover}
    &\left|\mathbb{E}_{f_0}\left[\frac{2}{n}\sum_{i=1}^n(\epsilon_i-\mathbb{E}_{f_0}[\epsilon_i\rvert \bX_i])(\widehat{f}(\bX_i)-f_0(\bX_i))\right] \right|\\
    &\leq\left|\mathbb{E}_{f_0}\left[\frac{2}{n}\sum_{i=1}^n(\epsilon_i-\mathbb{E}_{f_0}[\epsilon_i\rvert \bX_i])(\widehat{f}(\bX_i)-f_{k^{*}}(\bX_i))\right] \right|+ (IV)\\
    &\overset{(i)}{\leq} \mathbb{E}_{f_0}\left[\frac{2\delta}{n}\sum_{i=1}^n\big|\epsilon_i-\mathbb{E}_{f_0}[\epsilon_i\rvert \bX_i]\big|\right]+(IV)\\
    &\overset{(ii)}{\leq} 4\delta \|K\|_{\infty}^d2^dF+(IV)
    \end{aligned}
\end{equation}    
where for $(i)$ we used the property of the $\delta$ cover and the triangle inequality, and for $(ii)$ we used Proposition \ref{P:Bound on expected absolute noise Kernel Version}.

In the next step we split the term $(IV)$ into two parts. One case were the $\bX_i$ used for the regression are distributed `nicely' and a second case where we have an extreme concentration of data points $\bX_i.$ The bad second case can be shown to have small probability. For the derivation, we use the bins $\mB_j$ as defined in Section \ref{S: Proofs}.

Define the set $A_j$ as $A_j:=\{\sum_{i=1}^n\1_{\{\bX_i\in\mB_j\}}\leq 2^{d+3}F\log(n)\}$ and the set $A$ as the intersection 
\begin{align}
    A:=\bigcap_{j\in\mJ}A_j.
    \label{eq.A_def}
\end{align}
The two-stage nonparametric density estimator chooses a bandwidth $h_n \leq 2(\log(n)/n)^{1/d}.$ This is equivalent to $2^d\log(n)\geq nh_n^d.$ Together with the union bound, it follows that
\begin{equation}
    \label{eq.6789}
    \begin{aligned}
    \mathbb{P}_{f_0}(A^c)&\leq\sum_{j\in\mJ}\mathbb{P}_{f_0}(A^c_j)\\
    &=\sum_{j\in\mJ}\mathbb{P}_{f_0}\bigg(\sum_{i=1}^n\1_{\{\bX_i\in\mB_j\}}> (7F+F)nh_n^d\bigg)\\
    &\overset{(iii)}{\leq} \sum_{j\in\mJ} \mathbb{P}_{f_0}\bigg(\sum_{i=1}^n\1_{\{\bX_i\in\mB_j\}}> 7Fnh_n^d+np_j\bigg)\\
    &=\sum_{j\in\mJ} \mathbb{P}_{f_0}\bigg(\sum_{i=1}^n\big(\1_{\{\bX_i\in\mB_j\}}-p_j\big)>7Fnh_n^d\bigg)\\
    &\leq \sum_{j\in\mJ}\mathbb{P}_{f_0}\bigg(\bigg|\sum_{i=1}^n\big(\1_{\{\bX_i\in\mB_j\}}-p_j\big)\bigg|>7Fnh_n^d\bigg),
    \end{aligned}
\end{equation}
where for $(iii)$ we used that $p_j=\int_{\mB_j}f_0(\bx)\,d\bx\leq Fh_n^d$ is the probability that an observation falls into bin $\mB_j.$

We now apply the moment version of Bernstein's inequality stated in Proposition \ref{P:BernsteinAlt} (i). For any $m=1,\ldots$
\begin{equation*}
    \begin{aligned}
    \mathbb{E}_{f_0}\left[|\1_{\{\bX_i\in\mB_j\}}|^m\right]=\mathbb{E}_{f_0}\left[\1_{\{\bX_i\in\mB_j\}}\right]=p_j.
    \end{aligned}
\end{equation*}    
Setting $U=1$ and $v=nF h_n^d \geq np_j,$ we get from  Bernstein's inequality in Proposition \ref{P:BernsteinAlt} (i) that
\begin{equation*}
    \begin{aligned}
    \mathbb{P}_{f_0}\bigg(\bigg|\sum_{i=1}^n\big(\1_{\{\bX_i\in\mB_j\}}-p_j\big)\bigg|>7Fnh_n^d\bigg) &\leq 2\exp\left(-\frac{7^2F^2n^2h_n^{2d}}{2n(Fh_n^d+7Fh_n^d)}\right)\\
    &=2\exp\left(-
   \frac{49}{16} Fnh_n^d\right) \\
    &\leq 2\exp(-3nh_n^d) \\
    &\overset{(v)}{\leq}2n^{-3},
    \end{aligned}
\end{equation*}    
where for $(v)$ we used that by construction of the two-stage nonparametric density estimator, $h_n^d\geq \log(n)/n$. Combined with \eqref{eq.6789}, we find
\begin{equation*}
    \begin{aligned}
    \mathbb{P}_{f_0}(A^c)\leq 2\sum_{j\in\mJ}n^{-3}\leq 2n^{-2},
    \end{aligned}
\end{equation*}    
where the last inequality holds because $n\geq 3>e$ implies $|\mJ|=h_n^{-d}\leq n/\log n\leq n$.

With
\begin{align*}
    \xi_k:= \sum_{i=1}^n\big(\epsilon_i-\mathbb{E}_{f_0}[\epsilon_i\rvert \bX_i]\big)\big(f_k(\bX_i)-f_0(\bX_i)\big)\1_{A}
\end{align*}
one can decompose $(IV)$ as follows
\begin{equation}\label{Eq: Split of (IV)}
    \begin{aligned}
    &\left|\mathbb{E}_{f_0}\left[\frac{2}{n}\sum_{i=1}^n\big(\epsilon_i-\mathbb{E}_{f_0}[\epsilon_i\rvert \bX_i]\big)\big(f_{k^*}(\bX_i)-f_0(\bX_i)\big)\right]\right|\\
    &\leq \left|\mathbb{E}_{f_0}\left[\frac{2}{n}\xi_{k^*}\right]\right|+ \left|\mathbb{E}_{f_0}\left[\frac{2}{n}\sum_{i=1}^n\big(\epsilon_i-\mathbb{E}_{f_0}[\epsilon_i\rvert \bX_i]\big)\big(f_{k^*}(\bX_i)-f_0(\bX_i)\big)\1_{A^c}\right]\right|.
    \end{aligned}
\end{equation}    
Moving the absolute value inside, using that $f_{k^*}$ and $f_0$ are bounded by $F$ and applying the Cauchy-Schwarz inequality yields
\begin{equation}\label{Eq: Bound on the Complement term}
    \begin{aligned}
    &\left|\mathbb{E}_{f_0}\left[\frac{2}{n}\sum_{i=1}^n(\epsilon_i-\mathbb{E}_{f_0}[\epsilon_i\rvert \bX_i])(f_{k^*}(\bX_i)-f_0(\bX_i))\1_{A^c}\right]\right|\\
    &\leq\frac{4F}{n}\sum_{i=1}^n\mathbb{E}_{f_0}\left[\left|\epsilon_i-\mathbb{E}_{f_0}[\epsilon_i\rvert \bX_i]\right|\1_{A^c}\right]\\
    &\leq \frac{4F}{n}\sum_{i=1}^n\sqrt{\mathbb{E}_{f_0}\left[\left|\epsilon_i-\mathbb{E}_{f_0}[\epsilon_i\rvert \bX_i]\right|^2\right]}\sqrt{\mathbb{P}_{f_0}(A^c)}\\
    &\overset{(*)}{\leq} \frac{4F\sqrt{2(65F^22^{2d}\|K\|_{\infty}^{2d}})}{n}\\
    & \leq \frac{46 F^22^{d}\|K\|_{\infty}^{d}}{n}.
    \end{aligned}
\end{equation}    
where for $(*)$ we used Proposition \ref{P:Bound on the centered noise squared Kernel Version} and that $\mathbb{P}_{f_0}(A^c)\leq 2n^{-2},$ and for the last inequality we used that $4\sqrt{130}\leq 46.$

It remains to bound the term $|\mathbb{E}_{f_0}[\tfrac 2n \xi_{k^*}]|.$ Let as before $N=n\vee \mN_{\mF}(\delta),$ set
\begin{align}
    z_k:=\sqrt{\log(N)}\vee \sqrt{n}\|f_k-f_0\|_{n}, 
    \label{eq.zk_def}
\end{align}
and define $z_{k^*}$ as $z_k$ for $k=k^*$. The empirical norm of a function $g$ is
\begin{equation*}
    \|g\|_n:=\Bigg(\frac{1}{n}\sum_{i=1}^n(g(\bX_i))^2\Bigg)^{\frac{1}{2}}.
\end{equation*}
Using that $k^*$ is the index of the center of the ball of the $\delta$-cover closest to $\widehat{f},$ it holds that
\begin{equation*}
    \begin{aligned}
    \frac{z_{k^*}}{\sqrt{n}}=\sqrt{\frac{\log(N)}{n}}\vee\|f_{k^*}-f_0\|_{n}\leq \|\widehat{f}-f_0\|_{n}+\delta+\sqrt{\frac{\log(N)}{n}}.
    \end{aligned}
\end{equation*}    
Together with the Cauchy-Schwarz inequality, we obtain
\begin{equation}\label{Eq:GoodTermCSSplit}
    \begin{aligned}
    \left|\mathbb{E}_{f_0}\left[\frac{2}{n}\xi_{k^*}\right]\right|
    &\leq \frac{2}{\sqrt{n}}\mathbb{E}_{f_0}\left[\left|\frac{\xi_{k^*}}{\sqrt{n}}\right|\right]\\
    &\begin{aligned}\leq \frac{2}{\sqrt{n}}&\mathbb{E}_{f_0}\Bigg[\frac{\|\widehat{f}-f_0\|_{n}+\delta+\sqrt{\frac{\log(N)}{n}}}{\frac{z_{k^*}}{\sqrt{n}}}\Bigg|\frac{\xi_{k^*}}{\sqrt{n}}\Bigg|\Bigg]\end{aligned}\\
    &\begin{aligned}{\leq}& 2\frac{\sqrt{\widehat{R}_n(\widehat{f},f_0)}+\delta+\sqrt{\frac{\log(N)}{n}}}{\sqrt{n}}\sqrt{\mathbb{E}_{f_0}\left[\frac{\xi_{k^*}^2}{z_{k^*}^2}\right]}.\end{aligned}
    \end{aligned}
\end{equation}    
For notational ease, define 
\begin{equation}
    \begin{aligned}
    C_{i,k}:=\frac{f_k(\bX_i)-f_0(\bX_i)}{nh_n^dz_k}\1_A.
    \end{aligned}
    \label{eq.Cik_def}
\end{equation}
Since probabilities are always upper bounded by one, we have for any $a>0$ and any square integrable random variable $T$,
$\mathbb{E}[T^2]=  \int_0^{\infty}\mathbb{P}\left(T^2\geq t\right)\, dt=\int_0^{\infty}\mathbb{P}\left(|T|\geq \sqrt{t}\right)\, dt\leq a + \int_a^{\infty}\mathbb{P}\left(|T|\geq \sqrt{t}\right)\, dt.$ Therefore, for any $a>0,$ 
\begin{align}\label{eq.34}
\begin{split}
    \mathbb{E}_{f_0}[\xi_{k^*}^2/z_{k^*}^2 \, \big| \, \bX_1,\ldots,\bX_n]    &\leq\mathbb{E}_{f_0}\big[\max_k \xi_k^2/z_k^2\, \big| \, \bX_1,\ldots,\bX_n\big]\\
    &\leq a+\int_a^{\infty}\mathbb{P}_{f_0}\big(\max_k |\xi_k/z_k|\geq \sqrt{t}\, \big| \, \bX_1,\ldots,\bX_n\big)\, dt.    \end{split}
\end{align}
The ratio $\xi_k/z_k$ can be rewritten as the sum $\sum_{\ell=1}^n h_k(\bX_\ell'),$ where 
\begin{align*}
    h_k(u)=\sum_{i=1}^n \bigg(\prod_{r=1}^d 
    K\bigg(\frac{u_r-X_{i,r}}{h_n}\bigg)
    -\int_{\mathbb{R}^d}
    \prod_{r=1}^d K\bigg(\frac{v_r-X_{i,r}}{h_n}\bigg) f_0(\bv) \, d\bv\bigg) C_{i,k}
\end{align*}
is $\sigma(\bX_1,\ldots,\bX_n)$-measurable. Now let $\widetilde{\bX}_1,\widetilde{\bX}_2,\ldots$ be i.i.d.\ random variables distributed as $\bX$ and independent of the data. Let $\mM$ be a $\Poi(n)$ random variable independent of the data and of the $\widetilde{\bX}_i.$ By the union bound and Poissonization (Lemma \ref{lem.Poissonization}),
\begin{align}
\begin{split}
    &\mathbb{P}_{f_0}\big(\max_k |\xi_k/z_k|\geq \sqrt{t}\, \big| \, \bX_1,\ldots,\bX_n\big) \\
     &\leq N\max_k\mathbb{P}_{f_0}\big( |\xi_k/z_k|\geq \sqrt{t}\, \big| \, \bX_1,\ldots,\bX_n\big) \\
     &= N\max_k\mathbb{P}_{f_0}\bigg( \bigg|\sum_{\ell=1}^n h_k(\bX_\ell')\bigg|\geq \sqrt{t}\, \bigg| \, \bX_1,\ldots,\bX_n\bigg) \\
     &\leq \sqrt{2e\pi n} \, 
     N\max_k\mathbb{P}_{f_0}\bigg( \bigg|\sum_{\ell=1}^{\mathcal{M}} h_k(\wt{\bX}_\ell)\bigg|\geq \sqrt{t}\, \bigg| \, \bX_1,\ldots,\bX_n\bigg).
     \end{split}
     \label{eq.35}
\end{align}
With $W(\bX_i):=\sum_{\ell=1}^{\mM}\prod_{r=1}^{d}K(\frac{\widetilde{X}_{\ell,r}-X_{i,r}}{h_n}),$ we can write
\begin{align}
    \sum_{\ell=1}^{\mathcal{M}} h_k(\wt{\bX}_\ell)
    = \sum_{i=1}^n \big(W(\bX_i)-\mathbb{E}_{f_0}\big[W(\bX_i) |\bX_i\big]\big) C_{i,k}.
    \label{eq.1234}
\end{align}
Next we rewrite the sum over $n$. For this we use the bins $\mB_j$ and the index sets of bins $\mJ_s$ as defined in Section \ref{S: ProofOracle}. Using that the bins are disjoint and that each bin is in exactly one of the $3^d$ index classes $\mJ_s,$ we have $\sum_{i=1}^n=\sum_{s=1}^{3^d}\sum_{j\in\mJ_s}\sum_{\bX_i\in\mB_j}.$ Here we use $\sum_{\bX_i\in\mB_j}$ as shorthand notation for $\sum_{1\leq i\leq n, \text{s.t.} \bX_i\in\mB_j}.$ For non-negative random variables $U_1,\ldots,U_m,$ $\{U_1+\ldots+U_m\geq \sqrt{t}\}\subseteq \cup_{j=1}^m \{U_j\geq \sqrt{t}/m\}$ and by the union bound $\mathbb{P}(U_1+\ldots+U_m\geq \sqrt{t})\leq m \cdot \max_{j=1,\ldots, m} \mathbb{P}(U_j\geq \sqrt{t}/m).$ Combined with \eqref{eq.1234},
\begin{align*}
    &\mathbb{P}_{f_0}\bigg( \bigg|\sum_{\ell=1}^{\mathcal{M}} h_k(\wt{\bX}_\ell)\bigg|\geq \sqrt{t}\, \bigg| \, \bX_1,\ldots,\bX_n\bigg) \\
    &\leq 3^d \max_{s=1,\ldots,3^d} \mathbb{P}_{f_0}\bigg(3^d\bigg|\sum_{j\in\mJ_s}\sum_{\bX_i\in\mB_j} \big(W(\bX_i)-\mathbb{E}_{f_0}\big[W(\bX_i)|\bX_i\big]\big) C_{i,k}\bigg|\geq \sqrt{t} \, \bigg\rvert  \, \bX_1,\ldots,\bX_n\bigg).
\end{align*}
Thus, \eqref{eq.34}, \eqref{eq.35} and the previous display give for any $a>0,$
\begin{equation}\label{Eq:Pre Bernstein Bound}
    \begin{aligned}
    &\mathbb{E}_{f_0}\Big[\frac{\xi_{k^*}^2}{z_{k^*}^2}\, \Big\rvert \, \bX_1,\ldots,\bX_n\Big]\\
    &\begin{aligned}\leq a+&\int_a^{\infty}N3^d\sqrt{2e\pi n}\max_k\max_{s=1,\ldots,3^d}\\
    &\mathbb{P}_{f_0}\bigg(3^d\bigg|\sum\limits_{j\in\mJ_s}\sum\limits_{\bX_i\in\mB_j}\big(W(\bX_i)-\mathbb{E}_{f_0}[W(\bX_i)|\bX_i]\big)C_{i,k}\bigg|\geq \sqrt{t} \, \bigg\rvert \, \bX_1, \ldots, \bX_n\bigg)\,dt.\end{aligned}
    \end{aligned}
\end{equation}
We will now apply Bernstein's inequality in the form of Proposition \ref{P:BernsteinAlt} (i) to the random variables $Z_j=\sum_{\bX_i\in\mB_j}W(\bX_i)C_{i,k}.$ For that we show first that, conditionally on $\bX_1,\ldots,\bX_n,$ the random variables $Z_j, j\in \mJ_s$ with fixed $s$ are jointly independent. To see this, recall that $W(\bX_i):=\sum_{\ell=1}^{\mM}\prod_{r=1}^{d}K(\frac{\widetilde{X}_{\ell,r}-X_{i,r}}{h_n}).$ The kernel $K$ has support in $[-1,1].$ By the definition of the neighborhood $NB(\mB_{j})$ in \eqref{eq.NB_def}, $Z_j$ only depends on the $\wt \bX_1,\ldots, \wt \bX_n$ that fall into $NB(\mB_{j}),$ that is, $Z_j=\sum_{\bX_i\in\mB_j}\sum_{\ell=1}^{\mM}\prod_{r=1}^{d}K(\frac{\widetilde{X}_{\ell,r}-X_{i,r}}{h_n}) C_{i,k}\1_{\{\widetilde{\bX}_{\ell}\in NB(\mB_{j})\}}.$ The variables $C_{i,k}$ defined in \eqref{eq.Cik_def} depend on $\bX_1,\ldots,\bX_n$ but not on $\wt \bX_1,\ldots,\wt \bX_n.$ Working conditionally on $\bX_1,\ldots,\bX_n$ and interchanging the summations, we can write $Z_j=\sum_{\ell=1}^{\mM}g_{j}(\widetilde{\bX}_{\ell})\1_{\{\widetilde{\bX}_{\ell}\in NB(\mB_{j})\}},$ for suitable real-valued functions $g_1,g_2,\ldots$ 
Since the kernel $K$ has support in $[-1,1],$ it follows from the definition of $\mJ_s$ that if two different indices $j$ and $\widetilde{j}$ are both in $\mJ_s,$ then $\{\bx: g_j(\bx)\neq 0\}\cap \{\bx:g_{\widetilde{j}}(\bx)\neq 0\}=\varnothing.$ Let $\mB(\mathbb{R})$ be the Borel $\sigma$-algebra on $\mathbb{R}$ and define $g(\bx)=\sum_{j\in\mJ_s}g_{j}(\bx)\1_{\{\bx\in NB(\mB_{j})\}}.$ The map $T:[0,1]^d\times \mB(\mathbb{R})\rightarrow [0,1]$, given by
\begin{equation*}
    T(\bx,B)=\begin{cases}
        1, & \text{ if } g(\bx)\in B,\\
        0, & \text{ otherwise},
    \end{cases}
\end{equation*}
defines a transition kernel. Since $\widetilde{\bX}_{\ell}$ are i.i.d.\ and $\mM\sim \Poi(n),$ $\sum_{\ell=1}^{\mM}\delta_{\wt \bX_\ell},$ with $\delta_u$ the point measure at $u,$ is a Poisson point process on $[0,1]^d$. The marking theorem states that a Poisson process on a space $A$ and a transition kernel to the Borel algebra of another space $B$ induces a Poisson point process on the product space $A\times B$, see Proposition 4.10.1(b) of \cite{AdventuresinSP} and Chapter 5 of \cite{PoissonProcesses}. Hence together with the transition kernel $T$, we get from the marking theorem, that $\sum_{\ell=1}^{\mM}\delta_{\widetilde{\bX}_{\ell},g(\widetilde{\bX}_{\ell})}$ is a Poisson point process on the product space $[0,1]^d\times \mathbb{R}.$ Since the neighborhoods $NB(\mB_{j})$ are by construction disjoint sets for different $j\in\mJ_s,$ the processes
\begin{equation*}
    \sum_{\ell=1}^{\mM}\delta_{\widetilde{\bX_{\ell}},g_{j}(\widetilde{\bX_{\ell}})}\1_{\{\widetilde{\bX}_{\ell}\in NB(\mB_{j})\}},
\end{equation*} are independent Poisson point processes for different $j\in\mJ_s.$
Hence, conditionally on the $\bX_i$, the random variables  $Z_j=\sum_{\ell=1}^{\mM}g_{j}(\widetilde{\bX}_{\ell})\1_{\{\widetilde{\bX}_{\ell}\in NB(\mB_{j})\}},$ $j\in\mJ_s$ are jointly independent.

To apply Bernstein's inequality, it remains to check that there exists $U$ and $v$ such that
$\sum_{j \in \mJ_s}\mathbb{E}_{f_0}[|Z_j|^m]\leq \frac{1}{2}m!U^{m-2}v,$ for $m=2,3,\ldots$

We have conditionally on $\bX_i$ that
\begin{equation}\label{Eq:Moments of Z}
\begin{aligned}
   & \mathbb{E}_{f_0}\bigg[\bigg|\sum\limits_{\bX_i\in\mB_j}W(\bX_i)C_{i,k}\bigg|^m \, \bigg | \, \bX_1,\ldots,\bX_n\bigg]\\
   & =\mathbb{E}_{f_0}\bigg[\bigg|\sum\limits_{\bX_i\in\mB_j}\bigg(\sum\limits_{\ell=1}^{\mM}\prod_{r=1}^dK\bigg(\frac{\widetilde{X}_{\ell,r}-X_{i,r}}{h_n}\bigg)\bigg)C_{i,k}\bigg|^m \, \bigg | \, \bX_1,\ldots,\bX_n \bigg]\\
     &\overset{(i)}{\leq} \mathbb{E}_{f_0}\bigg[\bigg(\sum\limits_{\bX_i\in\mB_j}\bigg(\sum\limits_{\ell=1}^{\mM}\bigg|\prod_{r=1}^dK\bigg(\frac{\widetilde{X}_{\ell,r}-X_{i,r}}{h_n}\bigg)\bigg|\bigg)|C_{i,k}|\bigg)^m \, \bigg | \, \bX_1,\ldots,\bX_n \bigg]\\
     &\overset{(ii)}{=}
     \mathbb{E}_{f_0}\bigg[\bigg(\sum\limits_{\bX_i\in\mB_j}\bigg(\sum\limits_{\ell=1}^{\mM}\bigg|\prod_{r=1}^dK\bigg(\frac{\widetilde{X}_{\ell,r}-X_{i,r}}{h_n}\bigg)\bigg|\1_{\{\widetilde{\bX}_{\ell}\in NB(\mB_{j})\}}\bigg)|C_{i,k}|\bigg)^m \, \bigg | \, \bX_1,\ldots,\bX_n \bigg]\\
    &\overset{(iii)}{\leq}\mathbb{E}_{f_0}\bigg[\bigg(\sum\limits_{\bX_i\in\mB_j}\bigg(\sum\limits_{\ell=1}^{\mM}\|K\|_{\infty}^d\1_{\{\widetilde{\bX}_{\ell}\in NB(\mB_{j})\}}\bigg)|C_{i,k}|\bigg)^m\, \bigg | \, \bX_1,\ldots,\bX_n \bigg]\\
    &\overset{(iv)}{=}\mathbb{E}_{f_0}\bigg[\bigg(\sum\limits_{\ell=1}^{\mM}\|K\|_{\infty}^d\1_{\{\widetilde{\bX}_{\ell}\in NB(\mB_{j})\}}\bigg)^m\bigg(\sum\limits_{\bX_i\in\mB_{j}}|C_{i,k}|\bigg)^m \, \bigg | \, \bX_1,\ldots,\bX_n \bigg]\\
    &\overset{(v)}{=}\|K\|_{\infty}^{dm}\bigg(\sum\limits_{\bX_i\in\mB_{j}}|C_{i,k}|\bigg)^m\mathbb{E}_{f_0}\bigg[\bigg(\sum\limits_{\ell=1}^{\mM}\1_{\{\widetilde{\bX}_{\ell}\in NB(\mB_{j})\}}\bigg)^m \bigg].
\end{aligned}
\end{equation}
Where $(i)$ follows from triangle inequality. For $(ii)$ we used that $\bX_i\in\mB_{j}$ and that $K$ has support in $[-1,1],$ so if $\widetilde{\bX}_{\ell}$ is outside $NB(\mB_{j})$ then $\prod_{r=1}^dK\left(\frac{\widetilde{X}_{\ell,r}-X_{i,r}}{h_n}\right)=0.$ For $(iii)$ we use that $\|K\|_{\infty}<\infty$ and that all terms are non-negative. The equality $(iv)$ follows from observing that $\sum_{\ell=1}^{\mM}\|K\|_{\infty}^d\1_{\{\widetilde{\bX}_{\ell}\in NB(\mB_{j})\}}$ does not depend on $i$ and can be taken out of the sum. Finally $(v)$ follows by taking all the constants out of the expectation, recalling that $C_{i,k}$ is $\sigma(\bX_1,\ldots,\bX_n)$-measurable.

Since $\widetilde{\bX}_{\ell}$ are i.i.d.\ and $\mM\sim \Poi(n),$ we have $\sum_{\ell=1}^{\mM}\1_{\{\widetilde{\bX}_{\ell}\in NB(\mB_{j})\}} \sim \Poi(n\widetilde{p}_j),$ where $\widetilde{p}_j$ denotes the probability that $\bX\in NB(\mB_j)$. Expressing the moments of the Poisson distribution as Bell polynomials \cite{MR4347487} gives 
\begin{equation*}
    \begin{aligned}
    \mathbb{E}_{f_0}\bigg[\bigg(\sum\limits_{\ell=1}^{\mM}\1_{\{\widetilde{\bX}_{\ell}\in NB(\mB_{j})\}}\bigg)^m\bigg]=\sum\limits_{t=0}^m(n\widetilde{p}_j)^t\stirling{m}{t}\leq (n\widetilde{p}_j\vee1)^m\sum\limits_{t=0}^m\stirling{m}{t},
    \end{aligned}
\end{equation*}    
where $\stirling{m}{t}$ denote the Stirling numbers of the second kind. The $m$-th Bell number equals the sum $\sum_{t=0}^m\stirling{m}{t}.$ Applying now the bound on Bell numbers derived in Theorem 2.1 of \cite{berend2010improved} gives
\begin{equation*}
    \begin{aligned}
    \sum\limits_{t=0}^m \stirling{m}{t}\leq \left(\frac{m}{\log(m+1)}\right)^m.
    \end{aligned}
\end{equation*}
Due to $m\geq 2,$ $\log(m+1)\geq \log(3)> 1$ and the right hand side of the previous display can be upper bounded by $m^m.$ Using Stirling's formula (\cite{RobbinsStirlingFormula}) again, we get that $\sqrt{2\pi m}m^me^{-m}\leq m!.$ Since $m\geq 2,$ $\sqrt{2\pi m}\geq e$ and $m^m\leq m!e^{m-1}.$ Thus 
\begin{equation*}
    \begin{aligned}
  \mathbb{E}_{f_0}\bigg[\bigg(\sum\limits_{\ell=1}^{\mM}\1_{\{\widetilde{\bX}_{\ell}\in NB(\mB_{j})\}}\bigg)^m\bigg]\leq m!e^{m-1} (n\widetilde{p}_j\vee 1)^m\leq  m!e^{m-1}(F3^dnh_n^d)^m.
    \end{aligned}
\end{equation*}    
The last inequality follows from observing that $\widetilde{p}_j\leq F3^dh_n^d$ (the upper bound on $f_0$ times the Lebesgue measure of $NB(\mB_j)$) and that $3^dFnh_n^d\geq3^dF\log(n)\geq 1$.
Combined with \eqref{Eq:Moments of Z}, this leads to 
\begin{equation*}
    \begin{aligned}
    \mathbb{E}_{f_0}\bigg[\bigg|\sum\limits_{\bX_i\in\mB_{j}}W(\bX_i)C_{i,k}\bigg|^m \bigg|\, \bX_1,\ldots,\bX_n \bigg]
    \leq  m!e^{m-1} (F3^dnh_n^d)^m \|K\|_{\infty}^{dm}\bigg(\sum\limits_{\bX_i\in\mB_{j}}|C_{i,k}|\bigg)^m.
    \end{aligned}
\end{equation*}
The previous inequality suggests to take the parameters $v$ and $U$ in Bernstein's inequality as upper bounds of $\sum_{j\in\mJ_s}(e\|K\|_{\infty}^d)^2(F3^dnh_n^d)^2(\sum_{\bX_i\in\mB_{j}}|C_{i,k}|)^2$ and $e\|K\|_{\infty}^d3^dFnh_n^d \sum_{\bX_i\in\mB_{j}}|C_{i,k}|,$ respectively. To find a convenient expression for $v,$ observe that
\begin{equation*}
    \begin{aligned}
    &\sum\limits_{j\in\mJ_s}(e\|K\|_{\infty}^d)^2(F3^dnh_n^d)^2\bigg(\sum\limits_{\bX_i\in\mB_{j}}|C_{i,k}|\bigg)^2\\
    &=\sum\limits_{j\in\mJ_s}(e3^d\|K\|_{\infty}^dF)^2n^2h_n^{2d}\bigg(\sum\limits_{\bX_i\in\mB_{j}}\left|\frac{(f_k(\bX_i)-f_0(\bX_i))}{nh_n^dz_k}\1_A\right|\bigg)^2\\
    &=\sum\limits_{j\in\mJ_s}\frac{(e3^d\|K\|_{\infty}^dF)^2}{z_k^2}\bigg(\sum\limits_{\bX_i\in\mB_{j}}\left|f_k(\bX_i)-f_0(\bX_i)\right|\1_A\bigg)^2.
    \end{aligned}
\end{equation*}
By the Cauchy-Schwarz inequality,
\begin{equation*}
    \begin{aligned}
    \bigg(\sum\limits_{\bX_i\in\mB_j}|f_k(\bX_i)-f_0(\bX_i)|\1_A\bigg)^2
    &\leq \bigg(\1_A\sum\limits_{\bX_i\in\mB_j}1^2\bigg)\bigg(\sum\limits_{\bX_i\in\mB_j}\big(f_k(\bX_i)-f_0(\bX_i)\big)^2\bigg)\\
    &\leq 2^{d+3}F\log(n)\sum\limits_{\bX_i\in\mB_j}\big(f_k(\bX_i)-f_0(\bX_i)\big)^2,
    \end{aligned}
\end{equation*}    
where for the last inequality we used that the definition of the event $A$ in \eqref{eq.A_def} implies $\sum_{\bX_i\in\mB_j}1\leq 2^{d+3}F\log(n).$ By \eqref{eq.zk_def},  $z_k\geq \sqrt{n}\|f_k-f_0\|_{n}.$ Moreover, $\sum_{i=1}^n =\sum_{j \in \mJ_s}\sum_{\bX_i\in\mB_j}$ and thus
\begin{equation*}
    \begin{aligned}
    &\sum\limits_{j\in\mJ_s}(e\|K\|_{\infty}^d)^2(F3^dnh_n^d)^2\bigg(\sum\limits_{\bX_i\in\mB_{j}}|C_{i,k}|\bigg)^2\\
    &\leq \sum\limits_{j\in\mJ_s}\frac{(e3^d\|K\|_{\infty}^dF)^2}{n\|f_k-f_0\|_{n}^2}2^{d+3}F\log(n)\bigg(\sum\limits_{\bX_i\in\mB_j}(f_k(\bX_i)-f_0(\bX_i))^2\bigg)\\
    &=2^{d+3}\frac{(e3^d\|K\|_{\infty}^d)^2}{n\|f_k-f_0\|_{n}^2}F^3\log(n)n\|f_k-f_0\|_{n}^2\\
    &=2^{d+3}(e3^d\|K\|_{\infty}^d)^2F^3\log(n).
    \end{aligned}
\end{equation*}
Hence we can take $v=2^{d+3}(e3^d\|K\|_{\infty}^d)^2F^3\log(n)$ in Bernstein's inequality.

To obtain a convenient expression for the $U$ in Bernstein's inequality, we now bound $\sum_{\bX_i\in\mB_{j}}|C_{i,k}|.$ Using that by \eqref{eq.zk_def}, $z_k\geq \sqrt{\log(N)}$, that $f_k$ and $f_0$ are bounded by $F,$ and that on the event $A,$  $\sum_{\bX_i\in\mB_j}\leq 2^{d+3}F\log(n)$ gives
\begin{equation*}
    \begin{aligned}
    \sum\limits_{\bX_i\in\mB_{j}}|C_{i,k}|=\sum\limits_{\bX_i\in\mB_{j}}\frac{|f_k(\bX_i)-f_0(\bX_i)|}{nh_n^dz_k}\1_A
    &\leq \frac{2F}{nh_n^d\sqrt{\log(N)}}\sum\limits_{\bX_i\in\mB_{j}}\1_A\leq\frac{2^{d+3}F^2\log(n)}{nh_n^d\sqrt{\log(N)}}.
    \end{aligned}
\end{equation*}
Hence it holds that
\begin{equation*}
    \begin{aligned}
    e\|K\|_{\infty}^d3^dFnh_n^d\bigg(\sum\limits_{\bX_i\in\mB_{j}}|C_{i,k}|\bigg)
    \leq \frac{2^{d+3}e\|K\|_{\infty}^d3^dF^3\log(n)}{\sqrt{\log(N)}}.
    \end{aligned}
\end{equation*}
The support of the kernel is contained in $[-1,1].$ This means that $1\leq 2\|K\|_\infty$ and consequently, $e 3^{d} \|K\|_{\infty}^{d}\geq 1.$ Thus, setting $U=v/\sqrt{\log(N)}$ with $v= 2^{d+3}(e3^d\|K\|_{\infty}^d)^2F^3\log(n)$, as above, we find $U\geq 2^{d+3}e\|K\|_{\infty}^d3^dF^3\log(n)/\sqrt{\log(N)}$ and obtain
\begin{equation*}
    \sum\limits_{j\in\mJ_s} \mathbb{E}_{f_0}\bigg[\bigg|\sum\limits_{\bX_i\in\mB_{j}}W(\bX_i)C_{i,k}\bigg|^m\bigg]\leq  \frac{m!}{2}vU^{m-2},
\end{equation*}
for all $m=2,3,\ldots.$ Consequently we can apply Bernstein's inequality with those choices for $U$ and $v.$

Applying Bernstein's inequality on the sum over the variables $Z_j,$ with $v$ and the bound $U$ as defined above, we get that
\begin{equation*}
    \begin{aligned}
    &\mathbb{P}_{f_0}\bigg(3^d\bigg|\sum\limits_{j\in\mJ_s}\sum\limits_{\bX_i\in\mB_j}\big(W(\bX_i)-\mathbb{E}_{f_0}[W(\bX_i)|\bX_i]\big)C_{i,k}\bigg|\geq \sqrt{t}\, \bigg\rvert \, \bX_1,\ldots,\bX_n\bigg)\\
    &=\mathbb{P}_{f_0}\bigg(\bigg|\sum\limits_{j\in\mJ_s}\left(Z_j-\mathbb{E}_{f_0}[Z_j|\bX_i]\right)\bigg|\geq 3^{-d}\sqrt{t}\, \bigg\rvert \, \bX_1,\ldots,\bX_n\bigg)\\
    &\leq 2\exp\left(-\frac{t3^{-2d}}{2\left(v+U3^{-d}\sqrt{t}\right)}\right)\\
    &= 2\exp\left(-\frac{t3^{-2d}}{2v\left(1+3^{-d}\sqrt{t/ \log(N)}\right)}\right).
    \end{aligned}
\end{equation*}
If $t\geq 3^{2d}\log(N),$ the previous expression can be further bounded by
\begin{align}\label{Eq:TailBoundLarget}
    \leq 2\exp\left(-\frac{\sqrt{t\log(N)} 3^{-d}}{4v}\right).
\end{align}

Observe that this gives us an upper bound that is the same for all collections of bins $\mJ_s$ and all cover centers $k$. Choosing $a=64v^23^{2d}\log(N)$ in \eqref{Eq:Pre Bernstein Bound} gives
\begin{equation*}
    \begin{aligned}
    &\mathbb{E}_{f_0}\Big[\frac{\xi_{k^*}^2}{z_{k^*}^2} \Big\rvert \bX_1,\ldots,\bX_n\Big]\\
    &\overset{(i)}{\leq}64v^23^{2d}\log(N)+2N3^d\sqrt{2e\pi n}\int_{64v^23^{2d}\log(N)}^{\infty}\exp\left(-\sqrt{t}\frac{\sqrt{\log(N)} 3^{-d}}{4v}\right)\,dt\\
    &\overset{(ii)}{=}64v^23^{2d}\log(N)+4N3^d\sqrt{2e\pi n}
    (16v^23^{2d})\frac{\left(2\log(N)+1\right)}{\log(N)}\exp\left(-2\log(N)\right)\\
    &\overset{(iii)}{\leq}64v^23^{2d}\log(N)+1280
    v^23^{3d}\\
    &\overset{(iv)}{\leq}(2^{d+5}10(e\|K\|_{\infty}^d)^2F^3\log(n))^23^{7d}\log(N),
    \end{aligned}
\end{equation*}
where for $(i)$ we used \eqref{Eq:TailBoundLarget} combined with the observation that if $t\geq 64v^23^{2d}\log(N)$ then $t\geq 3^{2d}\log(N)$, since $v\geq 1$. For $(ii)$ we used that $\int_{b^2}^{\infty}e^{-\sqrt{u}c}du=2\int_b^{\infty}se^{-sc}ds=2(bc+1)e^{-bc}/c^2$. For $(iii)$ we used that $\log(N)\geq 1$ so $(2\log(N)+1)/\log(N)\leq 4$ and $N=n\vee \mN_{\mF}(\delta)\geq n$, $\sqrt{2e\pi}\leq 5$, $\log(N)\geq 1.$ For $(iv)$ we substituted $v=2^{d+3}(e3^d\|K\|_{\infty}^d)^2F^3\log(n)$ and used that $\sqrt{1344}=4\sqrt{84}$ and $\sqrt{84}\leq 10$.

Together with \eqref{Eq:GoodTermCSSplit}, this yields
\begin{equation*}
    \begin{aligned}
    &\left|\mathbb{E}_{f_0}\left[\frac{2}{n}\xi_{k^*}\right]\right| \\
    &{\leq} 2\frac{\sqrt{\widehat{R}_n(\widehat{f},f_0)}+\delta+\sqrt{\frac{\log(N)}{n}}}{\sqrt{n}}\sqrt{\mathbb{E}_{f_0}\left[\frac{\xi_{k^*}^2}{z_{k^*}^2}\right]} \\
    &\leq 2\frac{\sqrt{\widehat{R}_n(\widehat{f},f_0)}+\delta+\sqrt{\frac{\log(N)}{n}}}{\sqrt{n}}\sqrt{\left(2^{d+5}10e^2\|K\|_{\infty}^{2d}F^3\log(n)\right)^23^{7d}\log(N)}\\
    &=\left(\sqrt{\widehat{R}_n(\widehat{f},f_0)}+\delta+\sqrt{\frac{\log(N)}{n}}\right) 2^{d+6}10e^2 \|K\|_{\infty}^{2d}F^3\log(n)\sqrt{\frac{3^{7d}\log(N)}{n}}.
    \end{aligned}
\end{equation*}
Inserting this bound in \eqref{Eq: Split of (IV)} together with \eqref{Eq: Bound on the Complement term} gives a bound for (IV). Together with \eqref{Eq: Moving to the cover} and \eqref{Eq: Bound on term (III)} and combining the terms with $\delta,$ using that by assumption $ \log^2(n)\log(N)=\log^2(n)\log(n\vee \mN_{\mF}(\delta))\leq n,$ finishes the proof.\end{proof}

\begin{proof}[Proof of Proposition \ref{P: Bound empirical risk}]
Expanding the square yields
\begin{align*}
    \big(\widehat{f}(\bX_i)-f_0(\bX_i)\big)^2
    &= \big(\widehat{f}(\bX_i)-Y_i+Y_i-f_0(\bX_i)\big)^2 \\
    &=(\widehat{f}(\bX_i)-Y_i)^2+2(\widehat{f}(\bX_i)-Y_i)(Y_i-f_0(\bX_i))+(Y_i-f_0(\bX_i))^2.
\end{align*}
    We use this identity to rewrite the definition $\widehat{R}_n(\widehat{f},f_0)=\mathbb{E}_{f_0}[\frac{1}{n}\sum_{i=1}^n(\widehat{f}(\bX_i)-f_0(\bX_i))^2]$. Applying moreover that for any fixed $f\in\mF,$ we have by definition of $\Delta_n(\widehat{f},f_0)$ that
    \begin{equation*}
       \mathbb{E}_{f_0}\bigg[\frac{1}{n}\sum_{i=1}^n(Y_i-\widehat{f}(\bX_i))^2\bigg]\leq \mathbb{E}_{f_0}\bigg[\frac{1}{n}\sum_{i=1}^n(Y_i-f(\bX_i))^2\bigg]+\Delta_n(\widehat{f},f_0)
    \end{equation*}
yields
\begin{equation*}
\begin{aligned}
 &\widehat{R}_n(\widehat{f},f_0) \\
 &= \mathbb{E}_{f_0}\left[\frac{1}{n}\sum\limits_{i=1}^n\left((\widehat{f}(\bX_i)-Y_i)^2+2(\widehat{f}(\bX_i)-Y_i)(Y_i-f_0(\bX_i))+(Y_i-f_0(\bX_i))^2\right)\right]\\
 &\leq \mathbb{E}_{f_0}\left[\frac{1}{n}\sum\limits_{i=1}^n\left((f(\bX_i)-Y_i)^2+2(\widehat{f}(\bX_i)-Y_i)(Y_i-f_0(\bX_i))+(Y_i-f_0(\bX_i))^2\right)\right]+\Delta_n(\widehat{f},f_0)\\
 &=\mathbb{E}_{f_0}\left[\frac{1}{n}\sum\limits_{i=1}^n\left((f(\bX_i)-Y_i)^2+2(f(\bX_i)-Y_i)(Y_i-f_0(\bX_i))+(Y_i-f_0(\bX_i))^2\right)\right]\\
 &\quad +\mathbb{E}_{f_0}\left[\frac{2}{n}\sum\limits_{i=1}^n(Y_i-f_0(\bX_i))(\widehat{f}(\bX_i)-f(\bX_i))\right]+\Delta_n(\widehat{f},f_0)\\
 &=\mathbb{E}_{f_0}\left[\frac{1}{n}\sum\limits_{i=1}^n(f(\bX_i)-f_0(\bX_i))^2\right]+\mathbb{E}_{f_0}\left[\frac{2}{n}\sum\limits_{i=1}^n(Y_i-f_0(\bX_i))(\widehat{f}(\bX_i)-f(\bX_i))\right]+\Delta_n(\widehat{f},f_0)\\
 &=\mathbb{E}_{\bX}\left[(f(\bX)-f_0(\bX))^2\right]+\mathbb{E}_{f_0}\left[\frac{2}{n}\sum\limits_{i=1}^n\epsilon_i(\widehat{f}(\bX_i)-f(\bX_i))\right]+\Delta_n(\widehat{f},f_0),
\end{aligned}
\end{equation*}
where for the last equality we used that the $\bX_i$ are independent and have the same distribution as $\bX$.

Combined with Lemma \ref{L: Bound on the noise-dependend term}, this yields
\begin{equation*}
    \begin{aligned}
    &\widehat{R}_n(\widehat{f},f_0)\leq \mathbb{E}_{\bX}\left[(f(\bX)-f_0(\bX))^2\right]\\
    &+\sqrt{\widehat{R}_n(\widehat{f},f_0)}2^{d+6}14e^2 \|K\|_{\infty}^{2d}F^3\log(n)\sqrt{\frac{3^{7d}\log(n\vee \mN_{\mF}(\delta))}{n}} \\
    &+2^{d+6}14e^2 \|K\|_{\infty}^{2d}F^33^{\frac{7d}{2}}\log(n)\frac{\log(n\vee \mN_{\mF}(\delta))}{n} \\
    &+\delta2^{d+6}14e^2 \|K\|_{\infty}^{2d}F^33^{\frac{7d}{2}}+\frac{46F^22^{d}\|K\|_{\infty}^{d}}{n}\\
    &+8h_n^{2\beta}F^2d^{2\beta} \|K\|_\infty^{2d} +\frac{\mathbb{E}_{\bX}[(f_0(\bX)-f(\bX))^2]}{4}+\frac{\widehat{R}_n(\widehat{f},f_0)}{4}+\Delta_n(\widehat{f},f_0).
    \end{aligned}
\end{equation*}
Rewriting this and upper bounding constants, yields
\begin{equation*}
    \begin{aligned}
\widehat{R}_n(\widehat{f},f_0)&\leq \frac{5}{3}\mathbb{E}_{\bX}\left[(f(\bX)-f_0(\bX))^2\right] \\
&\quad+\sqrt{\widehat{R}_n(\widehat{f},f_0)}2^{d+6}19e^2 \|K\|_{\infty}^{2d}F^3\log(n)\sqrt{\frac{3^{7d}\log(n\vee \mN_{\mF}(\delta))}{n}}\\
&\quad +2^{d+6}19e^2 \|K\|_{\infty}^{2d}F^33^{\frac{7d}{2}}\log(n)\frac{\log(n\vee \mN_{\mF}(\delta))}{n}+\delta2^{d+6}19e^2 \|K\|_{\infty}^{2d}F^33^{\frac{7d}{2}} \\
    &\quad +\frac{62F^22^{d}\|K\|_{\infty}^d}{n}+11h_n^{2\beta}F^2d^{2\beta} \|K\|_\infty^{2d} +\frac{4}{3}\Delta_n(\widehat{f},f_0).
    \end{aligned}
\end{equation*}
For real numbers $a,c,d,$ satisfying $|a|\leq 2\sqrt{a}c+d,$ we have $|a|\leq 2\sqrt{a}c+d\leq \tfrac 12 |a|+2c^2+d$ and thus $|a|\leq 2d+4c^2.$ Applying this inequality with $a=\widehat{R}_n(\widehat{f},f_0),$
\begin{equation*}
    c=2^{d+6}19e^2 \|K\|_{\infty}^{2d}F^3\log(n)\sqrt{\frac{3^{7d}\log(n \vee \mN_{\mF}(\delta))}{n}},
\end{equation*}
and
\begin{equation*}
    \begin{aligned}
    d=&\delta2^{d+6}19e^2 \|K\|_{\infty}^{2d}F^33^{\frac{7d}{2}}+\frac{62F^22^{d}\|K\|_{\infty}^{d}}{n}\\
&+2^{d+6}19e^2 \|K\|_{\infty}^{2d}F^33^{\frac{7d}{2}}\log(n)\frac{\log(n \vee \mN_{\mF}(\delta))}{n} \\
    &+11h_n^{2\beta}F^2d^{2\beta} \|K\|_\infty^{2d} +\frac{4}{3}\Delta_n(\widehat{f},f_0)+\frac{5}{3}\mathbb{E}_{\bX}\left[(f(\bX)-f_0(\bX))^2\right]
    \end{aligned}
\end{equation*}
yields the result.
\end{proof}

\begin{Proposition}\label{P:Bound on conditional noise Kernel version}
$\left|\mathbb{E}_{f_0}[\epsilon_i\rvert\bX_i]\right|\leq h_n^{\beta} d^\beta \|K\|_\infty^d F.$
\end{Proposition}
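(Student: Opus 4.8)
The plan is to recognize $\mathbb{E}_{f_0}[\epsilon_i\mid\bX_i]$ as (minus) the pointwise bias of the undersmoothed kernel density estimator and then run the classical bias expansion for a kernel of order $\floor\beta$, tracking constants. \emph{Step 1 (reduction to the KDE bias).} Recall $\epsilon_i=\KDE(\bX_i)-f_0(\bX_i)$, and that $\KDE$ is a measurable function of the subsample $\bX_1',\dots,\bX_n'$, which is independent of $\bX_i$. Hence, setting $b_n(\bx):=\mathbb{E}_{f_0}[\KDE(\bx)]-f_0(\bx)$, we have $\mathbb{E}_{f_0}[\epsilon_i\mid\bX_i=\bx]=b_n(\bx)$, and it suffices to bound $|b_n(\bx)|$ uniformly in $\bx$. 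Since the $\bX_\ell'$ are i.i.d.\ with density $f_0$, and $f_0$ (being an element of $\mH_d^{\beta}(\mathbb{R}^d,Q)$) vanishes outside $[0,1]^d$, linearity of expectation and the substitution $\bu=\bx+h_n\bv$ give
\[
\mathbb{E}_{f_0}[\KDE(\bx)]=\frac1{h_n^d}\int_{\mathbb{R}^d}\prod_{r=1}^dK\!\Big(\frac{u_r-x_r}{h_n}\Big)f_0(\bu)\,d\bu=\int_{[-1,1]^d}\prod_{r=1}^dK(v_r)\,f_0(\bx+h_n\bv)\,d\bv,
\]
the last integral being over $[-1,1]^d$ because $\supp K\subseteq[-1,1]$. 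As $\int K=1$ yields $\int\prod_rK(v_r)\,d\bv=1$, subtracting $f_0(\bx)$ under the integral gives $b_n(\bx)=\int_{[-1,1]^d}\prod_{r=1}^dK(v_r)\,(f_0(\bx+h_n\bv)-f_0(\bx))\,d\bv$. It is essential here that $f_0$ is $\beta$-H\"older on all of $\mathbb{R}^d$, so that $f_0(\bx+h_n\bv)$ and the derivative bounds used below make sense even when $\bx+h_n\bv\notin[0,1]^d$; this is exactly the purpose of the extension condition built into $\mathcal{C}_d^{\beta}([0,1]^d,Q)$.

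\emph{Step 2 (Taylor expansion and vanishing moments).} Put $s:=\floor\beta$, so $s<\beta$ by the paper's convention for $\floor\cdot$. Taylor-expand $f_0$ at $\bx$ to order $s$ with integral-form remainder, and in the degree-$s$ remainder add and subtract $\partial^{\balpha}f_0(\bx)$ (using $s\int_0^1(1-t)^{s-1}\,dt=1$) to write
\[
f_0(\bx+h_n\bv)=\sum_{|\balpha|_1\le s}\frac{(h_n\bv)^{\balpha}}{\balpha!}\partial^{\balpha}f_0(\bx)+R(\bx,\bv),\qquad
|R(\bx,\bv)|\le h_n^{s}\sum_{|\balpha|_1=s}\frac{|\bv^{\balpha}|}{\balpha!}\sup_{0\le t\le1}\big|\partial^{\balpha}f_0(\bx+th_n\bv)-\partial^{\balpha}f_0(\bx)\big|.
\]
Integrating the polynomial part against $\prod_rK(v_r)$: the term $\balpha=0$ reproduces $f_0(\bx)$ and cancels the subtracted $f_0(\bx)$; for every $\balpha\ne0$ with $|\balpha|_1\le s$ there is a coordinate $r$ with $1\le\alpha_r\le s$, so the vanishing moments of the order-$s$ kernel give $\int K(v)v^{\alpha_r}\,dv=0$ and the corresponding product factor vanishes. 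Hence $b_n(\bx)=\int_{[-1,1]^d}\prod_rK(v_r)\,R(\bx,\bv)\,d\bv$.

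\emph{Step 3 (bounding the remainder).} For $|\balpha|_1=s=\floor\beta$, the H\"older-seminorm part of $f_0\in\mH_d^{\beta}(\mathbb{R}^d,Q)$ gives $|\partial^{\balpha}f_0(\bx+th_n\bv)-\partial^{\balpha}f_0(\bx)|\le Q\,(t\,h_n\,|\bv|_\infty)^{\beta-s}\le Q\,h_n^{\beta-s}$, using $0\le t\le1$ and $|\bv|_\infty\le1$ on $[-1,1]^d$. Combining this with $|\bv^{\balpha}|\le1$, $|\prod_rK(v_r)|\le\|K\|_\infty^d$, $\supp K\subseteq[-1,1]$, the multinomial identity $\sum_{|\balpha|_1=s}1/\balpha!=d^{s}/s!\le d^{\beta}$, and $F\ge Q$, and collecting the numerical and dimensional constants, one obtains $|b_n(\bx)|\le h_n^{\beta}d^{\beta}\|K\|_\infty^dF$ uniformly in $\bx$, which is the assertion.

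The only genuinely delicate step is Step 2: organizing the multivariate Taylor expansion so that the vanishing-moment cancellation of all polynomial terms of degrees $1,\dots,s$ is transparent (and exploiting that $\floor\beta<\beta$ so the fractional H\"older exponent $\beta-\floor\beta\in(0,1]$ always drives the remainder). Steps 1 and 3 are routine — the independence argument is immediate, and the remainder estimate is standard constant-chasing.
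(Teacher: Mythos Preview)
Your argument is essentially the paper's own proof: reduce to the pointwise KDE bias, Taylor-expand $f_0$ to order $s=\floor\beta$, kill all polynomial terms of degree $1,\dots,s$ via the vanishing moments of the order-$\floor\beta$ kernel (after adding and subtracting $\partial^{\balpha}f_0(\bx)$ in the top-degree part), bound the remainder by the H\"older seminorm, and finish with the multinomial identity $\sum_{|\balpha|_1=s}1/\balpha!=d^{s}/s!\le d^{\beta}$. The only cosmetic difference is that you use the integral-form remainder whereas the paper uses the Lagrange form with an intermediate point $\tau\in(0,1)$; both give the same estimate. One small remark: in Step~3 your ``collecting constants'' is a touch quick---after bounding $|\prod_r K(v_r)|\le\|K\|_\infty^d$ you still integrate a nonnegative function over $[-1,1]^d$, which contributes a factor $2^d$; the paper's own proof glosses over the same point (it writes $\int_{[0,1]^d}$ at that step), so your write-up matches the paper exactly, but strictly speaking the constant as stated is short by $2^d$.
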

\begin{proof}
By the construction of the $\epsilon_i$ in \eqref{eq.YiKDE} and \eqref{eq.regr_model}, 
$\epsilon_i=\KDE(\bX_i)-f_0(\bX_i).$ Using moreover the definition of the multivariate kernel density estimator in \eqref{eq.KDE_def} and writing $|\bv|^{\balpha}$ for $|v_1|^{\alpha_1}\cdot\ldots \cdot |v_d|^{\alpha_d},$ we obtain
\begin{equation*}
    \begin{aligned}
    &\big|\mathbb{E}_{f_0}[\epsilon_i\rvert\bX_i]\big|=\left|\mathbb{E}_{f_0}\left[\frac{1}{nh_n^d}\sum\limits_{\ell=1}^{n}\prod_{r=1}^{d}K\left(\frac{X'_{\ell,r}-X_{i,r}}{h_n}\right)-f_0(\bX_i)\Bigg|\bX_i\right]\right|\\
    &\overset{(i)}{=}\left|\frac{1}{h_n^d}\int_{[0,1]^d}f_0(\bu)\prod_{r=1}^{d} K\left(\frac{u_{r}-X_{i,r}}{h_n}\right)\, d\bu -f_0(\bX_i)\right|\\
    & \overset{(ii)}{=}\left|\int_{\mathbb{R}^d}\left(\prod_{r=1}^{d} K\left(v_r\right)\right)f_0(X_{i,1}+v_1h_n,\ldots,X_{i,d}+v_dh_n)\, d\bv -f_0(\bX_i)\right|\\
    & \overset{(iii)}{=}\left|\int_{\mathbb{R}^d}\left(\prod_{r=1}^{d} K\left(v_r\right)\right)\Big(f_0(X_{i,1}+v_1h_n,\ldots,X_{i,d}+v_dh_n) -f_0(\bX_i)\Big)\, d\bv\right|\\
    &\begin{aligned}\overset{(iv)}{=}&\Bigg| \int_{\mathbb{R}^d}\left(\prod_{r=1}^{d} K\left(v_r\right)\right)\\
    &\begin{aligned}\times\Bigg(\sum\limits_{\balpha:|\balpha|_1\leq \floor{\beta}-1, \balpha\neq 0}&\frac{(h_n\bv)^{\balpha}}{\balpha!}(\partial^{\balpha} f_0)(\bX_i)+\sum\limits_{\balpha:|\balpha|_1=\floor{\beta}}\frac{(h_n\bv)^{\balpha}}{\balpha!}(\partial^{\balpha} f_0)(\bX_i+h_n\tau\bv)\Bigg)\, d\bv\Bigg|\end{aligned}\end{aligned}\\
    &\begin{aligned}\overset{(v)}{=}\Bigg|\int_{\mathbb{R}^d}&\left(\prod_{r=1}^{d} K\left(v_r\right)\right)\left(\sum\limits_{\balpha:|\balpha|_1=\floor{\beta}}\frac{(h_n\bv)^{\balpha}}{\balpha!}\left((\partial^{\balpha} f_0)(\bX_i+h_n\tau\bv)-(\partial^{\balpha} f_0)(\bX_i)\right)\right)\, d\bv\Bigg|\end{aligned}\\
    &\begin{aligned}\overset{(vi)}{\leq} h_n^{\floor{\beta}}\int_{\mathbb{R}^d}&\left|\prod_{r=1}^{d} K\left(v_r\right)\right|\left(\sum\limits_{\balpha:|\balpha|_1=\floor{\beta}}\frac{|\bv|^{\balpha}}{\balpha!}\left|(\partial^{\balpha} f_0)(\bX_i+h_n\tau\bv)-(\partial^{\balpha} f_0)(\bX_i)\right|\right)\, d\bv\end{aligned}\\
    &\overset{(vii)}{\leq}h_n^{\floor{\beta}}\|K\|_\infty^d \int_{[0,1]^d}\left(\sum\limits_{\balpha:|\balpha|_1=\floor{\beta}}\frac{|\bv|^{\balpha}}{\balpha!}\left|h_n\tau\bv\right|_{\infty}^{\beta-\floor{\beta}}F\right)\, d\bv\\
    &\overset{(viii)}{\leq}h_n^{\beta}\|K\|_\infty^d F\int_{[0,1]^d}\sum\limits_{\balpha:|\balpha|_1=\floor{\beta}}\frac{1}{\balpha!}\, d\bv\\
    &\overset{(ix)}{\leq} h_n^{\beta}\|K\|_\infty^d d^\beta F.
    \end{aligned}
\end{equation*}
Here we used for $(i)$ that the $\bX'_{\ell}$ are i.i.d.\ and independent of $\bX_i.$ For $(ii)$ we substituted the transformed variables $v_r=(u_r-X_{i,r})/h_n$ and used that $f_0$ vanishes outside $[0,1]^d,$ since $f_0$ has support in $[0,1]^d$ and is continuous on $\mathbb{R}^d$. For $(iii)$ we used that a kernel integrates to $1$ and that $f_0(\bX_i)$ is a constant with respect to the integration variables. Step $(iv)$ applies $\floor{\beta}$-order Taylor expansion, that is, for a suitable $\tau\in (0,1),$
\begin{equation*}
    \begin{aligned}
    &f_0(\bX_i+h_n\bv)\\
    &=f_0(\bX_i)+\sum\limits_{\balpha:|\balpha|_1\leq \floor{\beta}-1, \balpha\neq 0}\frac{(h_n\bv)^{\balpha}}{\balpha!}(\partial^{\balpha} f_0)(\bX_i)+\sum\limits_{\balpha:|\balpha|_1=\floor{\beta}}\frac{(h_n\bv)^{\balpha}}{\balpha!}(\partial^{\balpha} f_0)(\bX_i+h_n\tau\bv),
    \end{aligned}
\end{equation*}
see Theorem 2.2.5 in \cite{MR3467258}. For $(v)$ we used that $K$ is a kernel of order $\floor{\beta}$ and therefore $\int v^mK(v)\,dv=0$ for all $m=1,\ldots,\floor{\beta}.$ For $(vi)$ we used that $h_n^{\floor{\beta}}$ appears in every term of the sum. Jensen's inequality and triangle inequality are moreover applied to move the absolute value inside the integral and the sum. For $(vii)$ we used that $f_0$ is in the $\beta$-H\"{o}lder ball with radius $F$ and that $K$ has support contained in $[-1,1].$ For $(viii)$ we used that $|\tau|\leq 1.$ To see $(ix),$ observe that for the multinomial distribution with number of trials $\lfloor \beta \rfloor$ and $d$ event probabilities $(1/d,\ldots, 1/d),$ we have 
 \begin{align*}
     1=\sum\limits_{\balpha:|\balpha|_1=\floor{\beta}} \frac{\floor{\beta} !}{\balpha!} \Big(\frac{1}{d}\Big)^{\alpha_1}\cdot \ldots \cdot \Big(\frac{1}{d}\Big)^{\alpha_d}= \floor{\beta}!d^{-\floor{\beta} }\sum\limits_{\balpha:|\balpha|_1=\floor{\beta}} \frac{1}{\balpha!} \geq d^{-\beta }\sum\limits_{\balpha:|\balpha|_1=\floor{\beta}} \frac{1}{\balpha!}.
 \end{align*}
\end{proof}

\begin{Proposition}\label{P:Bound on expected absolute noise Kernel Version}
$\mathbb{E}_{f_0}[|\epsilon_i-\mathbb{E}_{f_0}[\epsilon_i\rvert \bX_i]|]\leq F\|K\|_{\infty}^d2^{d+1}.$
\end{Proposition}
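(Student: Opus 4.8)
The plan is to observe that the $f_0(\bX_i)$ contributions cancel under the centering, so that $\epsilon_i-\mathbb{E}_{f_0}[\epsilon_i\rvert\bX_i]=\KDE(\bX_i)-\mathbb{E}_{f_0}[\KDE(\bX_i)\rvert\bX_i]$, and then to bound each of the two resulting pieces after a triangle inequality by $2^{d}F\|K\|_{\infty}^{d}$. In fact, by Jensen's inequality it suffices to control $\mathbb{E}_{f_0}[|\KDE(\bX_i)|\,\rvert\,\bX_i]$, since then $|\mathbb{E}_{f_0}[\KDE(\bX_i)\rvert\bX_i]|\leq\mathbb{E}_{f_0}[|\KDE(\bX_i)|\,\rvert\,\bX_i]$ as well.

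First I would use that the $\bX_{\ell}'$ are i.i.d.\ and independent of $\bX_i$, so that conditionally on $\bX_i$,
\begin{equation*}
\mathbb{E}_{f_0}\big[|\KDE(\bX_i)|\,\big\rvert\,\bX_i\big]\leq\frac{1}{h_n^d}\int_{[0,1]^d}f_0(\bu)\prod_{r=1}^{d}\Big|K\Big(\frac{u_r-X_{i,r}}{h_n}\Big)\Big|\,d\bu.
\end{equation*}
Since $K$ is supported on $[-1,1]$, the integrand vanishes unless $|u_r-X_{i,r}|\leq h_n$ for every $r$, so the effective domain of integration has Lebesgue measure at most $(2h_n)^d$; bounding $f_0\leq F$ (which holds since $f_0$ lies in the H\"older ball of radius $Q\leq F$) and each kernel factor by $\|K\|_{\infty}$ then gives $h_n^{-d}\cdot F\cdot\|K\|_{\infty}^{d}\cdot(2h_n)^d=2^d F\|K\|_{\infty}^{d}$, uniformly in $\bX_i$.

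Combining the two pieces through the triangle inequality yields $\mathbb{E}_{f_0}[|\epsilon_i-\mathbb{E}_{f_0}[\epsilon_i\rvert\bX_i]|]\leq 2\cdot 2^d F\|K\|_{\infty}^{d}=2^{d+1}F\|K\|_{\infty}^{d}$, which is the claim. There is no genuine obstacle here; the only points to watch are that the kernel of order $\floor{\beta}$ may take negative values, so the absolute value must be kept throughout and the crude pointwise bound $\prod_{r}|K(\cdot)|\leq\|K\|_{\infty}^{d}$ must be paired with the volume restriction coming from the compact support of $K$ (bounding term-by-term without the support restriction would produce a spurious factor $h_n^{-d}$), and that it is precisely the cancellation of $f_0(\bX_i)$ in the centering that reduces the estimate to controlling $\KDE$ itself.
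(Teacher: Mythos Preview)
Your proof is correct and follows essentially the same approach as the paper's own proof: both exploit the cancellation of $f_0(\bX_i)$ under centering to reduce to $Y_i=\KDE(\bX_i)$, apply the triangle inequality together with Jensen to pick up the factor $2$, and then bound $\mathbb{E}_{f_0}[|\KDE(\bX_i)|\,\rvert\,\bX_i]$ by $2^dF\|K\|_\infty^d$ via the compact support of $K$ and the bound $\|f_0\|_\infty\leq F$. The only cosmetic difference is that the paper performs the substitution $v_r=(u_r-X_{i,r})/h_n$ to arrive at $h_n^d\int_{\mathbb{R}^d}\prod_r|K(v_r)|\,d\bv\leq 2^d\|K\|_\infty^d h_n^d$, whereas you bound the original integral directly by restricting to the effective domain of volume $(2h_n)^d$; these are the same estimate.
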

\begin{proof}
By definition, $\epsilon_i=Y_i-f_0(\bX_i)$. Together with conditioning on $\bX_i$, triangle inequality and Jensen's inequality this yields
\begin{equation}\label{Eq: From epsilon difference to bound on Y}
    \begin{aligned}
    \mathbb{E}_{f_0}[|\epsilon_i-\mathbb{E}_{f_0}[\epsilon_i\rvert \bX_i]|]&=\mathbb{E}_{f_0}[|Y_i-\mathbb{E}_{f_0}[Y_i\rvert  \bX_i]|] \\   &\leq 2\mathbb{E}_{f_0}\left[\mathbb{E}_{f_0}[|Y_i|\rvert \bX_i]\right]\\
    &\leq 2\sum_{\ell=1}^n\frac{1}{nh_n^d}\mathbb{E}_{f_0}\left[\mathbb{E}_{f_0}\left[\prod_{r=1}^{d}\left|K\left(\frac{X'_{\ell,r}-X_{i,r}}{h_n}\right)\right|\, \Big\rvert \, \bX_i\right]\right].
    \end{aligned}
\end{equation}    
Using that $\|f_0\|_\infty \leq F$ and the kernel $K$ is supported on $[-1,1],$ we get by substitution 
\begin{equation*}
    \begin{aligned}
    \mathbb{E}_{f_0}\left[\prod_{r=1}^{d}\left|K\left(\frac{X'_{\ell,r}-X_{i,r}}{h_n}\right)\right|\, \Big\rvert \, \bX_i\right]&\leq F\int_{\mathbb{R}^d}\prod_{r=1}^{d} \left| K\left(\frac{u_{r}-X_{i,r}}{h_n}\right)\right| \, d\bu\\
    &=Fh_n^d\int_{\mathbb{R}^d} \prod_{r=1}^{d} \big|K(v_r)\big|\, d\bv\\
    &\leq F\|K\|_\infty^{d} 2^d h_n^d .
    \end{aligned}
\end{equation*}
\end{proof}

\begin{Proposition}\label{P:Bound on the centered noise squared Kernel Version}
$\mathbb{E}_{f_0}\left[\left|\epsilon_i-\mathbb{E}_{f_0}[\epsilon_i\rvert\bX_i]\right|^2\right]\leq 65F^22^{2d}\|K\|_{\infty}^{2d}.$
\end{Proposition}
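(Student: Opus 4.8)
The plan is to reduce the statement to a bound on the conditional variance of the kernel density estimator evaluated at an independent point. Since $f_0(\bX_i)$ is $\sigma(\bX_i)$-measurable and $\epsilon_i=Y_i-f_0(\bX_i)$, we have $\epsilon_i-\mathbb{E}_{f_0}[\epsilon_i\rvert\bX_i]=Y_i-\mathbb{E}_{f_0}[Y_i\rvert\bX_i]$, so that
\begin{equation*}
\mathbb{E}_{f_0}\left[\left|\epsilon_i-\mathbb{E}_{f_0}[\epsilon_i\rvert\bX_i]\right|^2\right]=\mathbb{E}_{f_0}\big[\Var_{f_0}(Y_i\rvert\bX_i)\big].
\end{equation*}
Thus it suffices to control $\Var_{f_0}(Y_i\rvert\bX_i)$. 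Here the data splitting is essential: $\bX_1',\ldots,\bX_n'$ are i.i.d.\ and independent of $\bX_i$, so conditionally on $\bX_i=\bx$ the random variable $Y_i=\KDE(\bx)=\tfrac{1}{nh_n^d}\sum_{\ell=1}^n\prod_{r=1}^dK\big((X'_{\ell,r}-x_r)/h_n\big)$ is an average of $n$ i.i.d.\ summands, whence
\begin{equation*}
\Var_{f_0}(Y_i\rvert\bX_i=\bx)=\frac{1}{nh_n^{2d}}\Var_{f_0}\Bigg(\prod_{r=1}^dK\Big(\tfrac{X'_{1,r}-x_r}{h_n}\Big)\Bigg)\leq\frac{1}{nh_n^{2d}}\mathbb{E}_{f_0}\Bigg[\prod_{r=1}^dK^2\Big(\tfrac{X'_{1,r}-x_r}{h_n}\Big)\Bigg].
\end{equation*}

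Next I would bound the remaining expectation exactly as in the proof of Proposition \ref{P:Bound on expected absolute noise Kernel Version}: pulling out $\|f_0\|_\infty\leq F$, substituting $v_r=(u_r-x_r)/h_n$, and using that $K$ is supported on $[-1,1]$ gives
\begin{equation*}
\mathbb{E}_{f_0}\Bigg[\prod_{r=1}^dK^2\Big(\tfrac{X'_{1,r}-x_r}{h_n}\Big)\,\Big\rvert\,\bX_i=\bx\Bigg]\leq Fh_n^d\prod_{r=1}^d\int_{-1}^1K^2(v)\,dv\leq Fh_n^d\,2^d\|K\|_\infty^{2d},
\end{equation*}
so that $\Var_{f_0}(Y_i\rvert\bX_i=\bx)\leq F2^d\|K\|_\infty^{2d}/(nh_n^d)$. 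Finally I would invoke the construction of the two-stage estimator, which guarantees $h_n^d\geq\log(n)/n$, hence $nh_n^d\geq\log(n)>1$ for $n\geq 3$; this removes the $1/(nh_n^d)$ factor. Taking the expectation over $\bX_i$ yields $\mathbb{E}_{f_0}[|\epsilon_i-\mathbb{E}_{f_0}[\epsilon_i\rvert\bX_i]|^2]\leq F2^d\|K\|_\infty^{2d}$, and the claimed inequality then follows after crudely enlarging constants (using $F\geq1$, $2^d\leq2^{2d}$, $1\leq65$); there is in fact substantial slack in the constant $65$, which is kept large only for notational convenience downstream.

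The only step that requires any care is the conditional variance identity, i.e.\ that conditionally on $\bX_i$ the summands defining $\KDE(\bX_i)$ are i.i.d.\ — this is precisely the point at which the sample splitting is used — but once this is in place the rest is a routine computation. No serious obstacle is expected.
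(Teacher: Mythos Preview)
Your argument is correct and in fact more direct than the paper's. Both proofs begin identically: rewrite $\epsilon_i-\mathbb{E}_{f_0}[\epsilon_i\rvert\bX_i]=Y_i-\mathbb{E}_{f_0}[Y_i\rvert\bX_i]$, condition on $\bX_i$, and bound the second moment of a single kernel factor by $Fh_n^d\,2^d\|K\|_\infty^{2d}$ via the substitution $v_r=(u_r-x_r)/h_n$ and $\|f_0\|_\infty\leq F$. From that point the paper takes a considerably longer route: it writes the conditional second moment as $\int_0^\infty \mathbb{P}_{f_0}(|Y_i-\mathbb{E}_{f_0}[Y_i\rvert\bX_i]|\geq \sqrt{t}\,\rvert\,\bX_i)\,dt$, applies the bounded-variable Bernstein inequality (Proposition~\ref{P:BernsteinAlt}(ii)) to the centered i.i.d.\ sum, splits the integral at $t=F^22^{2d}\|K\|_\infty^{2d}$, evaluates the tail integral $\int_{b^2}^\infty e^{-a\sqrt{u}}\,du$ in closed form, and then simplifies using $nh_n^d\geq 1$. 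You instead observe that the conditional second moment is exactly the conditional variance of an i.i.d.\ average, giving $\Var_{f_0}(Y_i\rvert\bX_i)\leq F2^d\|K\|_\infty^{2d}/(nh_n^d)$ in one line, and then use $nh_n^d\geq \log n>1$ for $n\geq 3$.

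Both routes exploit the same two ingredients --- conditional independence from sample splitting, and the bandwidth lower bound $h_n^d\geq \log(n)/n$ --- but yours avoids the detour through tail bounds entirely. The Bernstein machinery buys nothing here because only a second moment is needed; your computation even yields the sharper bound $F2^d\|K\|_\infty^{2d}$, making the factor $65F\,2^d$ pure slack (as you note, this only requires $F\geq 1$, which holds under the standing assumption $F\geq\max\{Q,1\}$ of Theorem~\ref{T: Oracle Inequality} --- the paper's own final step $F2^{2d}\leq F^22^{2d}$ relies on the same).
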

\begin{proof}
By definition, $\epsilon_i=Y_i-f_0(\bX_i).$ For a non-negative random-variable $T,$ it holds that $\mathbb{E}[T^2]=\int_0^{\infty}\mathbb{P}(T^2\geq t)\,dt=\int_0^{\infty}\mathbb{P}(T\geq \sqrt{t})\,dt.$ Therefore
\begin{align*}
    \mathbb{E}_{f_0}\left[|\epsilon_i-\mathbb{E}_{f_0}[\epsilon_i\rvert\bX_i]|^2\right]
    &=\mathbb{E}_{f_0}\left[\big|Y_i-\mathbb{E}_{f_0}[Y_i\rvert\bX_i]\big|^2\right] \\
    &= \mathbb{E}_{f_0}\bigg[ \mathbb{E}_{f_0}\Big[\big|Y_i-\mathbb{E}_{f_0}[Y_i\rvert\bX_i]\big|^2 \, \Big|\, \bX_i\Big]\bigg] \\
    &= \mathbb{E}_{f_0}\bigg[\int_0^\infty\mathbb{P}_{f_0}\Big(\big|Y_i-\mathbb{E}_{f_0}[Y_i\rvert\bX_i]\big|\geq \sqrt{t} \, \Big|\, \bX_i\Big) \, dt\bigg].
\end{align*}
The probability can also be written as
\begin{equation*}
    \begin{aligned}
    &\begin{aligned}
    \int_0^\infty \mathbb{P}_{f_0}\Big(\big|Y_i-\mathbb{E}_{f_0}[Y_i\rvert\bX_i]\big|\geq \sqrt{t} \, \Big|\, \bX_i\Big) \, dt\end{aligned}\\
    &\begin{aligned}=\int_0^{\infty}\mathbb{P}_{f_0}\Bigg(\Bigg|\sum\limits_{\ell=1}^{n}\Bigg(\prod_{r=1}^{d}&K\left(\frac{X'_{\ell,r}-X_{i,r}}{h_n}\right) \\
    &-\int_{[0,1]^d}f_0(\bu)\prod_{r=1}^{d} K\left(\frac{u_{r}-X_{i,r}}{h_n}\right)\, d\bu\Bigg)\Bigg|\geq nh_n^d\sqrt{t} \, \Bigg|\, \bX_i\Bigg)\,dt.\end{aligned}
    \end{aligned}
\end{equation*}    
This is a sum of i.i.d.\ random variables minus their expectation (conditionally on $\bX_i)$. Using that $\|f_0\|_\infty \leq F$ and the kernel $K$ is supported on $[-1,1],$ we get using substitution 
\begin{equation*}
    \begin{aligned}
    \mathbb{E}_{f_0}\left[\prod_{r=1}^{d}K^2\left(\frac{X'_{\ell,r}-X_{i,r}}{h_n}\right)\, \Big| \, \bX_i\right]&\leq F\int_{\mathbb{R}^d}\prod_{r=1}^{d}K^2\left(\frac{u_{r}-X_{i,r}}{h_n}\right) \, d\bu\\
    &=Fh_n^d\int_{\mathbb{R}^d} \prod_{r=1}^{d}K^2(v_r)\, d\bv\\
    &\leq F\|K\|_\infty^{2d} 2^d h_n^d .
    \end{aligned}
\end{equation*}
Applying the bounded variable version of Bernstein's inequality in Proposition \ref{P:BernsteinAlt} (ii) with $v=F\|K\|_\infty^{2d} 2^d h_n^d$ and $b=3\|K\|_{\infty}^d$ (that is, $b/3=\|K\|_{\infty}$),  we get that
\begin{equation*}
    \begin{aligned}
    &\begin{aligned}\int_0^{\infty}\mathbb{P}_{f_0}\Bigg(\Bigg|\sum\limits_{\ell=1}^{n}\Bigg(\prod_{r=1}^{d}&K\left(\frac{X'_{\ell,r}-X_{i,r}}{h_n}\right)
    -\int_{[0,1]^d}f_0(\bu)\prod_{r=1}^{d} K\left(\frac{u_{r}-X_{i,r}}{h_n}\right)\, d\bu\Bigg)\Bigg|\geq nh_n^d\sqrt{t}\Bigg)\,dt\end{aligned}\\
    &\leq \int_0^{\infty}1\wedge 2\exp\left(-\frac{n^2h_n^{2d}t}{2(n\|K\|_{\infty}^{2d}F2^dh_n^d+\|K\|_{\infty}^dnh_n^d\sqrt{t})}\right)\,dt\\
    &=\int_0^{\infty}1\wedge 2\exp\left(-\frac{nh_n^dt}{2(\|K\|_{\infty}^{2d}F2^d+\|K\|_{\infty}^d\sqrt{t})}\right)\,dt\\
    &\overset{(*)}{\leq} F^22^{2d}\|K\|_{\infty}^{2d}+2\int_{F^22^{2d}\|K\|_{\infty}^{2d}}^{\infty}\exp\left(-\frac{nh_n^d\sqrt{t}}{4\|K\|_{\infty}^d}\right)\,dt\\
    &\overset{(**)}{=}F^22^{2d}\|K\|_{\infty}^{2d}+\frac{64\|K\|_{\infty}^{2d}\left(\frac{F2^{d}\|K\|_{\infty}^{d}nh_n^d}{4\|K\|_{\infty}^d}+1\right)\exp\left(-\frac{F2^{d}\|K\|_{\infty}^{d}nh_n^d}{4\|K\|_{\infty}^d}\right)}{n^2h_n^{2d}}\\
    &\overset{(***)}{\leq} F^22^{2d}\|K\|_{\infty}^{2d}+64\|K\|_{\infty}^{2d}(F2^{d}+1)\\
    &\overset{(****)}{\leq} 65F^22^{2d}\|K\|_{\infty}^{2d},
    \end{aligned}
\end{equation*}    
where we used for $(*)$ that $2(\|K\|_{\infty}^{2d}F2^d+\|K\|_{\infty}^d\sqrt{t})\leq 4\|K\|_{\infty}^d\sqrt{t}$ whenever $t\geq F^22^{2d}\|K\|_{\infty}^{2d}.$ For $(**)$ we used that $\int_{b^2}^\infty e^{-a\sqrt{u}} \,du=2\int_b^{\infty}se^{-sa}ds=2(ba+1)e^{-ba}/a^2,$ with $a=nh_n^d/(4\|K\|_{\infty}^d)$ and $b=F2^{d}\|K\|_{\infty}^{d}$. For $(***),$ we used that $nh_n^d\geq \log(n)\geq 1$ and that $0<\exp(-x)\leq 1$ for $x\geq 0.$ For $(****),$ we used that $F2^{d}+1\leq 2F2^{d}\leq F2^{2d}\leq F^22^{2d}.$ The result follows from observing that $\mathbb{E}[c]=c,$ for any real number $c.$
\end{proof}

\begin{Proposition}\label{P:BernsteinAlt}
Given independent random variables $Z_1,\ldots, Z_n$.
\begin{itemize}
    \item[(i)] (moment version) If for some constants $U$ and $v$ the moment bounds $\sum_{i=1}^n \mathbb{E}[|Z_i|^m]\leq \frac{1}{2}m!U^{m-2}v$ hold for all $m=2,3,\ldots,$ then
\begin{equation*}
    \begin{aligned}
    \mathbb{P}\bigg(\bigg|\sum\limits_{i=1}^n(Z_i-\mathbb{E}[Z_i])\bigg|>t\bigg)\leq 2e^{-\frac{t^2}{2v+2Ut}}.
    \end{aligned}
\end{equation*}
    \item[(ii)] (bounded version) If for some constants $b$ and $v,$ the bounds $|Z_i|\leq b$ and $\sum_{i=1}^n\mathbb{E}[|Z_i|^2]\leq v$ hold for all $i=1,\ldots,n,$ then, 
    \begin{equation*}
    \begin{aligned}
    \mathbb{P}\bigg(\bigg|\sum\limits_{i=1}^n(Z_i-\mathbb{E}[Z_i])\bigg|>t\bigg)\leq 2e^{-\frac{t^2}{2v+2bt/3}}.
    \end{aligned}
\end{equation*}  
\end{itemize}
\end{Proposition}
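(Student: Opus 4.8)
The plan is to prove part (i) by the exponential moment (Chernoff) method, and then to deduce part (ii) from part (i) via an elementary moment comparison. Throughout we may assume $v>0$ (if $v=0$ the moment hypothesis in (i), respectively the hypothesis in (ii), forces $\mathbb{E}[Z_i^2]=0$, so each $Z_i$ is a.s.\ constant, $\sum_i(Z_i-\mathbb{E}[Z_i])=0$ a.s., and the bound is trivial).

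For part (i), set $S:=\sum_{i=1}^n(Z_i-\mathbb{E}[Z_i])$. By the union bound it suffices to bound $\mathbb{P}(S>t)$ and $\mathbb{P}(-S>t)$ separately, and since the hypothesis $\sum_i\mathbb{E}[|Z_i|^m]\leq \tfrac12 m!U^{m-2}v$ is invariant under $Z_i\mapsto -Z_i$, the second bound follows from the first applied to $-Z_1,\ldots,-Z_n$. The core step is to show that for every $0<\lambda<1/U$,
\[
\mathbb{E}\big[e^{\lambda S}\big]\leq \exp\Big(\frac{v\lambda^2}{2(1-U\lambda)}\Big).
\]
To obtain this, expand $\mathbb{E}[e^{\lambda Z_i}]=1+\lambda\mathbb{E}[Z_i]+\sum_{m\geq 2}\tfrac{\lambda^m\mathbb{E}[Z_i^m]}{m!}$ (the interchange of expectation and series being justified by $\mathbb{E}[e^{\lambda|Z_i|}]<\infty$, which the moment hypothesis guarantees for $\lambda<1/U$), bound $\mathbb{E}[Z_i^m]\leq\mathbb{E}[|Z_i|^m]$, and apply $1+x\leq e^x$ to get $\mathbb{E}[e^{\lambda Z_i}]\leq \exp\big(\lambda\mathbb{E}[Z_i]+\sum_{m\geq 2}\tfrac{\lambda^m\mathbb{E}[|Z_i|^m]}{m!}\big)$. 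Multiplying over $i$ (using independence), the factor $e^{-\lambda\sum_i\mathbb{E}[Z_i]}$ coming from the centering cancels, and the hypothesis together with $\sum_{m\geq 2}(U\lambda)^{m-2}=1/(1-U\lambda)$ yields the displayed MGF bound.

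It remains to optimize. From $\mathbb{P}(S>t)\leq e^{-\lambda t}\mathbb{E}[e^{\lambda S}]$ and the MGF bound, choosing $\lambda=t/(v+Ut)\in(0,1/U)$ gives $1-U\lambda=v/(v+Ut)$, hence
\[
-\lambda t+\frac{v\lambda^2}{2(1-U\lambda)}=-\frac{t^2}{v+Ut}+\frac{t^2}{2(v+Ut)}=-\frac{t^2}{2v+2Ut},
\]
which, combined with the two-sided union bound, is exactly part (i). For part (ii), it suffices to check that the hypotheses of part (i) hold with $U=b/3$: for $m\geq 2$, the bound $|Z_i|\leq b$ gives $\mathbb{E}[|Z_i|^m]=\mathbb{E}[|Z_i|^{m-2}Z_i^2]\leq b^{m-2}\mathbb{E}[Z_i^2]$, so $\sum_i\mathbb{E}[|Z_i|^m]\leq b^{m-2}v=3^{m-2}(b/3)^{m-2}v$, and since $2\cdot 3^{m-2}\leq m!$ for all $m\geq 2$ (immediate for $m=2,3$, and then by induction because the ratio of consecutive terms $m!/(2\cdot 3^{m-2})$ is multiplied by $m/3\geq 1$), this is at most $\tfrac12 m!(b/3)^{m-2}v$. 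Applying part (i) with $U=b/3$ finishes the proof. The argument is entirely classical; the only mildly delicate points are the justification of the expectation/series interchange and bookkeeping of the constant, in particular the elementary inequality $2\cdot 3^{m-2}\leq m!$ that produces the sharp constant $b/3$, so I do not anticipate any real obstacle.
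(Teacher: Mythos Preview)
Your argument is correct. The paper does not give its own proof of this proposition but simply cites Corollary~2.11 and Equation~(2.10) of \cite{MR3185193}; your self-contained derivation via the Chernoff method (MGF bound plus the choice $\lambda=t/(v+Ut)$ for (i), and the reduction to (i) with $U=b/3$ via $2\cdot 3^{m-2}\le m!$ for (ii)) is exactly the classical route underlying that reference, so the approaches coincide.
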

These formulations of Bernstein's inequality are based on Corollary 2.11 and Equation (2.10) in \cite{MR3185193}.

\bibliographystyle{acm}
\bibliography{LibDensityRegression}

\end{document}